\newcommand{\LieBr}[2]{[ #1, #2 ]}
\newcommand{\LieGrp}{\mathfrak{G}}
\newcommand{\LieAlg}{\mathfrak{g}}
\newcommand{\covD}{\bfD}
\newcommand{\covDlow}{\underline{\covD}}
\newcommand{\Alow}{\underline{A}}
\newcommand{\Flow}{\underline{F}}
\newcommand{\wlow}{\underline{w}}
\newcommand{\bfAlow}{\underline{\bfA}}
\newcommand{\Aini}{\overline{A}}
\newcommand{\Eini}{\overline{E}}
\newcommand{\Fini}{\overline{F}}
\newcommand{\Vini}{\overline{V}}
\newcommand{\calIini}{\overline{\calI}}
\newcommand{\sbr}{\overline{s}}
\newcommand{\subr}{\underline{s}}
\newcommand{\calAlow}{\underline{\calA}}
\newcommand{\calIFs}{{}^{(F_{s})} \calI}
\newcommand{\calIAlow}{{}^{(\Alow)} \calI}
\newcommand{\Atemp}{A^{\dagger}}
\newtheorem*{rep@theorem}{\rep@title}
\newcommand{\newreptheorem}[2]{%
\newenvironment{rep#1}[1]{%
 \def\rep@title{#2 \ref{##1}}%
 \begin{rep@theorem}}%
 {\end{rep@theorem}}}
\theoremstyle{plain}
\newtheorem*{MainTheorem}{Main Theorem}
\newtheorem*{corrPrinciple}{Correspondence Principle}
\newtheorem{bigTheorem}{Theorem}
\newtheorem{theorem}{Theorem}[section]
\newtheorem{corollary}[theorem]{Corollary}
\newtheorem{lemma}[theorem]{Lemma}
\newtheorem{proposition}[theorem]{Proposition}
\theoremstyle{definition}
\newtheorem{definition}[theorem]{Definition}
\theoremstyle{remark}
\newtheorem{remark}[theorem]{Remark}
\numberwithin{equation}{section}
\definecolor{green}{rgb}{0,0.8,0} 
\newcommand{\nrm}[1]{\Vert#1\Vert}
\newcommand{\abs}[1]{\vert#1\vert}
\newcommand{\set}[1]{\{#1\}}
\newcommand{\tr}{\textrm{tr}}
\newcommand{\lap}{\triangle}
\newcommand{\ud}{\mathrm{d}}
\newcommand{\rd}{\partial}
\newcommand{\nb}{\nabla}
\newcommand{\bb}{\Big}
\newcommand{\alp}{\alpha}
\newcommand{\dlt}{\delta}
\newcommand{\eps}{\epsilon}
\newcommand{\lmb}{\lambda}
\newcommand{\sgm}{\sigma}
\newcommand{\bfa}{{\bf a}}
\newcommand{\bfb}{{\bf b}}
\newcommand{\bfc}{{\bf c}}
\newcommand{\bfA}{{\bf A}}
\newcommand{\bfB}{{\bf B}}
\newcommand{\bfD}{{\bf D}}
\newcommand{\bfE}{{\bf E}}
\newcommand{\bfF}{{\bf F}}
\newcommand{\bbH}{\mathbb H}
\newcommand{\bbR}{\mathbb R}
\newcommand{\calA}{\mathcal A}
\newcommand{\calB}{\mathcal B}
\newcommand{\calD}{\mathcal D}
\newcommand{\calE}{\mathcal E}
\newcommand{\calF}{\mathcal F}
\newcommand{\calH}{\mathcal H}
\newcommand{\calI}{\mathcal I}
\newcommand{\calL}{\mathcal L}
\newcommand{\calN}{\mathcal N}
\newcommand{\calO}{\mathcal O}
\newcommand{\calQ}{\mathcal Q}
\newcommand{\calX}{\mathcal X}
\newcommand{\pfstep}[1]{\vspace{0.1in} {\it #1.}}
\begin{document}

\title[]{Finite energy global well-posedness of the Yang-Mills equations on $\bbR^{1+3}$: An approach using the Yang-Mills heat flow.}
\author{Sung-Jin Oh}%
\address{Department of Mathematics, UC Berkeley, Berkeley, CA, 94720}%
\email{sjoh@math.berkeley.edu}%

\thanks{}%
\subjclass{}%
\keywords{}%

\dedicatory{}%
\commby{}%
\begin{abstract}
In this work, along with the companion work \cite{Oh:6stz7nRe}, we propose a novel approach to the problem of gauge choice for the \emph{Yang-Mills equations} on the Minkowski space $\bbR^{1+3}$. A crucial ingredient is the associated \emph{Yang-Mills heat flow}. As this approach does not possess the drawbacks of the previous approaches (as in \cite{Klainerman:1995hz}, \cite{Tao:2000vba}), it is expected to be more robust and easily adaptable to other settings.

Building on the results proved in the companion article \cite{Oh:6stz7nRe}, we prove, as the first application of our approach, finite energy global well-posedness of the Yang-Mills equations on $\bbR^{1+3}$. This is a classical result first proved by S. Klainerman and M. Machedon \cite{Klainerman:1995hz} using local Coulomb gauges. As opposed to their method, the present approach avoids the use of Uhlenbeck's lemma \cite{Uhlenbeck:1982vna}, and hence does \emph{not} involve localization in space-time.
\end{abstract}
\maketitle

\section{Introduction} \label{sec:introduction}
%
In this article, along with the companion article \cite{Oh:6stz7nRe}, we propose a novel approach to the problem of gauge choice for the \emph{Yang-Mills equations}
\begin{equation*}
	\covD^{\mu}  F_{\nu \mu} =0
\end{equation*}
 on the Minkowski space $\bbR^{1+3}$ with a non-abelian structural group $\LieGrp$. (For the notations, we refer the reader to \S \ref{subsec:intro:bg}.) An essential ingredient of our approach is the celebrated \emph{Yang-Mills heat flow}
 \begin{equation*}
	\rd_{s} A_{i} = \covD^{\ell} F_{\ell i},
\end{equation*}
which, first proposed by Donaldson \cite{Donaldson:1985vh}, is a well-studied equation in the field of geometric analysis. See \cite{Rade:1992tu}, \cite{Charalambous:2010vt} and etc. 

In the companion article \cite{Oh:6stz7nRe}, the new Yang-Mills heat flow approach has been employed to establish local well-posedness of the Yang-Mills equations for initial data $(\Aini_{i}, \Eini_{i}) \in (\dot{H}^{1}_{x} \cap L^{3}_{x} ) \times L^{2}_{x}$ (see Theorem \ref{thm:lwp4YM} for the precise statement). The key idea in \cite{Oh:6stz7nRe} is that the Yang-Mills heat flow can be used to construct a new global gauge, dubbed the \emph{caloric-temporal gauge}, in which the hyperbolicity and null structure of the Yang-Mills equations become manifest. 

In the present article, we improve this local well-posedness result to \emph{finite energy global well-posedness} of the Yang-Mills equations; see the Main Theorem in \S \ref{subsec:intro:mainThm} for the precise statement. Unlike the simpler model equation $\Box u = u^{3}$, which possesses the same scaling property as the Yang-Mills equations, finite energy global well-posedness of the Yang-Mills equations is \emph{not} an immediate consequence of the local well-posedness result and conservation of energy. The difficulty is that the energy $\bfE[F_{\mu\nu}](t)$, defined in \eqref{eq:YMenergy}, \emph{a priori} only controls certain components of $\rd_{t,x} A_{i}(t,x)$ in $L^{2}_{x}$; this phenomenon is connected to the gauge-invariance of the Yang-Mills equations. We will overcome this difficulty using the Yang-Mills heat flow approach, as developed in \cite{Oh:6stz7nRe}. 

Finite energy global well-posedness of the Yang-Mills equations was first proved by Klainerman-Machedon \cite{Klainerman:1995hz} using the method of local Coulomb gauges, which involves a delicate space-time localization procedure. 
This makes global Fourier-analytic techniques which are necessary in the low regularity setting (e.g., $H^{s, \dlt}$ or $X^{s,b}$ spaces) difficult to apply when the initial data is large. For a more detailed discussion, see \S \ref{subsec:intro:motivation} below. Our approach, on the other hand, does not involve localization in space-time, and therefore is more robust and easily adaptable to other settings. A discussion of future applications of our approach will be given in \S \ref{subsec:intro:apps}. 


\subsection{Background: The Yang-Mills equations on $\bbR^{1+3}$} \label{subsec:intro:bg}
We will work on the Minkowski space $\bbR^{1+3}$, equipped with the Minkowski metric of signature $(-+++)$. All tensorial indices will be raised and lowered by using the Minkowski metric. Moreover, we will adopt the Einstein summation convention of summing up repeated upper and lower indices. Greek indices, such as $\mu, \nu, \lmb$, will run over $x^{0}, x^{1}, x^{2}, x^{3}$, whereas latin indices, such as $i, j, k, \ell$, will run \emph{only} over the spatial indices $x^{1}, x^{2}, x^{3}$. We will often use $t$ for $x^{0}$.

Let $\LieGrp$ be a Lie group with the Lie algebra $\LieAlg$, which is equipped with a bi-invariant inner product\footnote{A \emph{bi-invariant inner product} is an inner product on $\LieAlg$ invariant under the adjoint map. A sufficient condition for the existence of such an inner product is that $\LieGrp$ be a product of an abelian and a semi-simple Lie groups.} $(\cdot, \cdot) : \LieAlg \times \LieAlg \to [0, \infty)$. The bi-invariant inner product will be used to define the absolute value of elements in $\LieAlg$, and moreover will be used in turn to define the $L^{p}_{x}$-norm of $\LieAlg$-valued functions.

For simplicity, we will assume that $\LieGrp$ is a matrix group. An explicit example which is useful to keep in mind is the group of special unitary matrices $\LieGrp = \mathrm{SU}(n)$, in which case $\LieAlg = \mathfrak{su}(n)$ is the set of complex traceless anti-hermitian matrices and the bi-invariant metric is given by $(A, B) := \tr(A B^{\star})$. 

Consider a $\LieAlg$-valued 1-form $A_{\mu}$ on $\bbR^{1+3}$, which we will call a \emph{connection 1-form}, or \emph{connection coefficients}\footnote{We take a fairly pragmatic point of view towards the definitions of geometric concepts (such as connection and curvature), for the sake of simplicity. For more information on the geometric background of the concepts introduced here (involving principal bundles, associated vector bundles and etc.), we recommend the reader the standard references \cite{Bleeker:2005uj}, \cite{Kobayashi:1963uh}.}. For any $\LieAlg$-valued tensor field $B$ on $\bbR^{1+3}$, we define the associated \emph{covariant derivative} $\covD = {}^{(A)} \covD$ by 
\begin{equation*}
	\covD_{\mu} B := \rd_{\mu} B + \LieBr{A_{\mu}}{B}, \quad \mu = 0,1,2,3
\end{equation*}
where $\rd_{\mu}$ is the ordinary directional derivative on $\bbR^{1+3}$. 

The commutator of two covariant derivatives gives rise to a $\LieAlg$-valued 2-form $F_{\mu \nu}$, called the \emph{curvature 2-form} associated to $A_{\mu}$, in the following fashion.
\begin{equation*}
	\covD_{\mu} \covD_{\nu} B  - \covD_{\nu} \covD_{\mu} B = \LieBr{F_{\mu \nu}}{B}.
\end{equation*}

Using the definition, it is not difficult to verify that $F_{\mu \nu}$ is expressed directly in terms of $A_{\mu}$ by the formula
\begin{equation*}
	F_{\mu \nu} = \rd_{\mu} A_{\nu} - \rd_{\nu} A_{\mu} + \LieBr{A_{\mu}}{A_{\nu}}.
\end{equation*}

From the way $F_{\mu \nu}$ arises from $A_{\mu}$, it follows that the following \emph{Bianchi identity} holds.
\begin{equation} \label{eq:intro:bianchi} \tag{Bianchi}
	\covD_{\mu} F_{\nu \lmb} + \covD_{\nu} F_{\lmb \mu} + \covD_{\lmb} F_{\mu \nu} = 0.
\end{equation}

A connection 1-form $A_{\mu}$ is said to be a solution to the \emph{Yang-Mills equations} \eqref{eq:hyperbolicYM} on $\bbR^{1+3}$ if the following equation holds for $\nu = 0,1,2,3$.
\begin{equation} \label{eq:hyperbolicYM} \tag{YM}
	\covD^{\mu} F_{\mu \nu} = 0.
\end{equation}

Note the similarity of \eqref{eq:intro:bianchi} and \eqref{eq:hyperbolicYM} with the Maxwell equations $\ud F =0$ and $\rd^{\mu} F_{\nu \mu} = 0$. In fact, the Maxwell equations are a special case of \eqref{eq:hyperbolicYM} in the case $\LieGrp = \mathrm{SU}(1)$.

An essential feature of \eqref{eq:hyperbolicYM} is the \emph{gauge structure}, which we explain now. Let $U$ be a smooth $\LieGrp$-valued function. This $U$ may act on $A, \covD, F$ as a \emph{gauge transform} according to the following rules:
\begin{equation*}
\begin{aligned}
	\widetilde{A}_{\mu} =  U A_{\mu} U^{-1} - \rd_{\mu} U U^{-1}, \qquad
	\widetilde{\covD}_{\mu} =  U \covD_{\mu} U^{-1}, \qquad
	\widetilde{F}_{\mu \nu} =  U F_{\mu \nu} U^{-1}.
\end{aligned}
\end{equation*} 

If a $\LieAlg$-valued tensor transforms in the fashion $\widetilde{B} = U B U^{-1}$, then we say that it is \emph{gauge covariant}, or \emph{covariant under gauge transforms}. Note that the curvature 2-form is gauge covariant. Given a gauge covariant $B$, its covariant derivative $\covD_{\mu} B$ is also gauge covariant, as the following formula shows:
\begin{equation*}
	\widetilde{\covD}_{\mu} \widetilde{B} = U \covD_{\mu} B U^{-1}.
\end{equation*}

Due to bi-invariance, we furthermore have $(\widetilde{B},\widetilde{B}) = (B, B)$.

Note that \eqref{eq:hyperbolicYM} is evidently covariant under smooth gauge transforms. It has the implication that a solution to \eqref{eq:hyperbolicYM} makes sense only as a class of connection 1-forms, which are connected to each other by smooth gauge transforms. Accordingly, we make the following definition.

\begin{definition} 
A \emph{classical solution} to \eqref{eq:hyperbolicYM} is a class of smooth connection 1-forms $A$ satisfying \eqref{eq:hyperbolicYM}, which are related to each other by smooth gauge transforms. A \emph{generalized solution} to \eqref{eq:hyperbolicYM} is defined to be a class of gauge equivalent connection 1-forms $A$ for which there exists a sufficiently smooth representative $A$ (say $\rd_{t,x} A \in C_{t} L^{2}_{x}, A \in C_{t} L^{3}_{x}$) which satisfies \eqref{eq:hyperbolicYM} in the sense of distributions.
\end{definition}

A choice of a particular representative will be referred to as a \emph{gauge choice}. A gauge is usually chosen by imposing a condition, called a \emph{gauge condition}, on the representative. Some classical examples of gauge conditions are the \emph{temporal gauge} $A_{0} = 0$, or the \emph{Coulomb gauge }$\rd^{\ell} A_{\ell} = 0$, where $\ell$, being a latin index, is summed only over the spatial indices $1,2,3$.

In this work, as well as the companion paper \cite{Oh:6stz7nRe}, we study the Cauchy problem associated to \eqref{eq:hyperbolicYM}. As in the case of Maxwell equations, an initial data set consists of $(\Aini_{i}, \Eini_{i})$ for $i=1,2,3$, where $\Aini_{i} = A_{i}(t=0)$ (magnetic potential) and $\Eini_{i} = F_{0i}(t=0)$ (electric field). Note that one component of \eqref{eq:hyperbolicYM}, namely $\nu = 0$, imposes the following \emph{constraint equation} on the possible initial data set $(\Aini_{i}, \Eini_{i})$:
\begin{equation} \label{eq:YMconstraint}
	\rd^{\ell} \Eini_{\ell} + \LieBr{\Aini^{\ell}}{\Eini_{\ell}} = 0.
\end{equation}

The energy of $F_{\mu \nu}$ at time $t$ is defined by
\begin{equation} \label{eq:YMenergy}
	\bfE[F_{\mu \nu}](t) = \frac{1}{2} \int_{\bbR^{3}} \sum_{\ell=1,2,3} (F_{0\ell}(t,x), F_{0\ell}(t,x)) + \sum_{k, \ell=1,2,3, \, k < \ell} (F_{k \ell}(t,x), F_{k \ell}(t,x))  \, \ud x.
\end{equation}

The energy $\bfE[F_{\mu \nu}](t)$ is conserved under \eqref{eq:hyperbolicYM}, i.e., for sufficiently smooth solution $A_{\mu}$ to \eqref{eq:hyperbolicYM} on an interval $I$, $\bfE[F_{\mu \nu}](t_{1}) = \bfE[F_{\mu \nu}](t_{2})$ for every $t_{1}, t_{2} \in I$. Conservation of energy holds, in particular, if $A_{\mu}$ is a regular solution to \eqref{eq:hyperbolicYM}, in the sense of Definition \ref{def:mainThm:reg4YM} below.

Note that \eqref{eq:hyperbolicYM} remain invariant under the scaling
\begin{equation} \label{eq:YMscaling}
	x^{\alp} \to \lmb x^{\alp}, \quad A \to \lmb^{-1} A, \quad F \to \lmb^{-2} F.
\end{equation}

We remark that the conserved energy $\bfE(t)$ and also the norms $\nrm{\rd_{x} \Aini_{i}}_{L^{2}_{x}}$, $\nrm{\Eini_{i}}_{L^{2}_{x}}$ \emph{decrease} as $\lmb$ increases according to the above scaling. This reflects the \emph{sub-criticality} of these quantities compared to the scaling property of \eqref{eq:hyperbolicYM}.


\subsection{The problem of gauge choice and previous approaches} \label{subsec:intro:motivation}
We will begin with a discussion on the importance and difficulty of the problem of choosing an appropriate gauge in the study of the Yang-Mills equations. Our discussion will revolve around the following concrete example, which is a classical result of Klainerman-Machedon \cite{Klainerman:1995hz}, stated in a simplified form.

\begin{theorem}[Klainerman-Machedon \cite{Klainerman:1995hz}] \label{thm:intro:KM}
Let $(\Aini_{i}, \Eini_{i})$ be a smooth initial data set satisfying the constraint equation \eqref{eq:YMconstraint}. Consider the Cauchy problem for these data.
\begin{enumerate}
\item \emph{($H^{1}_{x}$ local well-posedness)} There exists a classical solution $A_{\mu}$ to the Cauchy problem for \eqref{eq:hyperbolicYM} on a time interval $(-T^{\star}, T^{\star})$, where $T^{\star} > 0$ depends only on $\nrm{\Aini_{i}}_{\dot{H}^{1}_{x}}, \nrm{\Eini_{i}}_{L^{2}_{x}}$. The solution is unique in an appropriate gauge, e.g. in the temporal gauge $A_{0} = 0$.
\item \emph{(Finite energy global well-posedness)} Furthermore, if the initial data possess finite energy $\bfE(0) < \infty$, then the solution $A_{\mu}$ extends globally.
\end{enumerate}
\end{theorem}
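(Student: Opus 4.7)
\emph{Plan.} Part (1) is established in the companion paper \cite{Oh:6stz7nRe}. For (2), the plan is to iterate (1) to produce a global solution, using the conserved energy $\bfE$ as the sole coercive quantity. Concretely, the continuation criterion built into (1) reduces the task to the following a priori estimate: for any classical solution on $[0, T)$ with $\bfE(0) < \infty$, the $\dot{H}^{1}_{x} \times L^{2}_{x}$ norm of the data slice $(A_{i}(t), F_{0i}(t))$, measured in an admissible gauge, is bounded uniformly in $t \in [0, T)$ by a quantity depending only on $\bfE(0)$. The $L^{2}_{x}$ bound on $F_{0i}(t)$ is immediate from conservation of $\bfE$, since $F_{0i}$ appears in the energy integrand and is gauge-covariant; the nontrivial task is the $\dot{H}^{1}_{x}$ bound on $A_{i}(t)$, which is not gauge-invariant.

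The device for this remaining estimate, replacing Klainerman-Machedon's local Coulomb gauges, is the spatial Yang-Mills heat flow. At a fixed time $t \in [0, T)$, I would run the flow $\rd_{s} \calA_{i} = \covD^{\ell} \calF_{\ell i}$ on $\bbR^{3}$, starting from $\calA_{i}(\cdot, 0) = A_{i}(t)$ in the caloric gauge $\calA_{s} = 0$. This parabolic equation is sub-critical on $\bbR^{3}$; the spatial magnetic energy $\tfrac{1}{2} \sum_{k,\ell} \nrm{\calF_{k\ell}(\cdot, s)}_{L^{2}_{x}}^{2}$ is monotone non-increasing in $s$ along the flow, with dissipation identity
\[
	\int_{0}^{\infty} \sum_{i} \nrm{\covD^{\ell} \calF_{\ell i}(\cdot, s)}_{L^{2}_{x}}^{2} \, \ud s \leq \bfE(0).
\]
Assuming the heat-flow and caloric-gauge framework developed in \cite{Oh:6stz7nRe} furnishes (a) global-in-$s$ existence of this flow from finite-energy data, (b) quantitative $\dot{H}^{1}_{x}$ (and higher Sobolev) smoothing for $\calA_{i}(\cdot, s_{0})$ at any fixed $s_{0} > 0$ in terms of $\bfE(0)$ alone, and (c) control of the gauge transform $U(t, \cdot)$ relating $\calA_{i}(\cdot, s_{0})$ back to the original representative $A_{i}(t)$ in energy-level norms, one can transfer the smoothing bound at $s_{0}$ back to $A_{i}(t)$ in an admissible gauge and thereby close the a priori estimate.

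The main obstacle I anticipate is ingredient (c): the large-data YMHF on $\bbR^{3}$ can, a priori, generate substantial gauge rotations even while the underlying curvature stays bounded, so controlling $U(t, \cdot)$ at the energy level, and in a manner stable as $t$ varies (so that the compatible temporal component $A_{0}$ and thus $F_{0i}$ evolve with $A_{i}$ in the same gauge), is delicate and must be organized so that the transfer does not lose regularity. Once this energy-to-regularity a priori estimate is in hand, iterating (1) from a sequence of times $t_{n} \to T$ extends the solution past $T$, and a standard continuity argument upgrades this to global existence on $\bbR$.
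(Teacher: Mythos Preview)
Your high-level strategy matches the paper's: Part~(1) is deferred to \cite{Oh:6stz7nRe}, and Part~(2) is reduced to an a~priori $\dot{H}^{1}_{x}$ bound on $A_{i}(t)$ in an admissible gauge, to be obtained via the Yang--Mills heat flow. You also correctly flag ingredient~(c) as the crux. However, there are two substantive gaps between your plan and what actually closes the argument.

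First, running only the \emph{spatial} flow $\rd_{s}\calA_{i}=\covD^{\ell}\calF_{\ell i}$ at each fixed $t$ is not enough. The paper extends the full space-time connection via the \emph{dynamic} flow \eqref{eq:dYMHF}, i.e.\ $F_{s\mu}=\covD^{\ell}F_{\ell\mu}$ for $\mu=0,1,2,3$, obtaining a solution of \eqref{eq:HPYM} on $(-T_{0},T_{0})\times\bbR^{3}\times[0,1]$ in the caloric-temporal gauge. The component $A_{0}$ (and hence $F_{s0}$, $F_{0i}$) must be carried along the flow because the gauge transform $V$ back to the temporal gauge at $s=0$ is governed by $\rd_{t}V=V\,A_{0}(s=0)$, and $A_{0}(s=0)$ is only accessible after solving the coupled parabolic system and integrating $\rd_{s}A_{0}=F_{s0}$ down from $s=1$ (where $\Alow_{0}=0$).

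Second, and more seriously, your ingredient~(c) cannot be obtained from parabolic (fixed-time) estimates alone. Controlling $V$ via Lemma~\ref{lem:est4gt2temporal} requires the norm $\calA_{0}[A_{0}(s=0)](I)$, which contains genuinely space-time quantities such as $\nrm{A_{0}}_{L^{1}_{t}L^{\infty}_{x}}$ and $\nrm{\rd_{x}^{(2)}A_{0}}_{L^{1}_{t}L^{2}_{x}}$. These are \emph{not} consequences of energy conservation plus heat-flow smoothing; they come from the \emph{hyperbolic} analysis of \eqref{eq:HPYM}, i.e.\ Strichartz-type estimates for the wave equations satisfied by $F_{si}$ and $\Alow_{i}$ (this is the content of Theorem~\ref{thm:dynEst}). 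The paper's structure is precisely: covariant parabolic estimates give fixed-time control of $\calI(t)$ in terms of $\bfE$ and $T_{0}$ (Theorem~\ref{thm:ctrlByE}), and then a separate short-time hyperbolic argument (Theorem~\ref{thm:dynEst}), iterated over subintervals, supplies the $\calA_{0}$ bound. Your proposal collapses this second step into ``control of the gauge transform,'' but without the wave-equation input there is no mechanism to bound $\nrm{A_{0}}_{L^{1}_{t}L^{\infty}_{x}}$, and the transfer back to the temporal gauge fails. A minor additional point: your~(b) as stated (smoothing depending on $\bfE(0)$ alone) is too strong; the bound on $\calIAlow(t)$ in Proposition~\ref{prop:ctrlByE:Alw} depends on $\calIAlow(0)$ and grows polynomially in $t$, which is still sufficient for continuation but not what you wrote.
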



After explaining the importance of gauge choice for proving Theorem \ref{thm:intro:KM}, we will briefly summarize the previous approaches to the problem of gauge choice, namely the \emph{(local) Coulomb gauge} \cite{Klainerman:1995hz} and the \emph{temporal gauge} \cite{Segal:1979hg}, \cite{Eardley:1982fb}, \cite{Tao:2000vba}. It will be seen that each has its own set of drawbacks, which in fact makes Theorem \ref{thm:intro:KM} the best result so far in terms of the regularity condition on the initial data, concerning local and global well-posedness of \eqref{eq:hyperbolicYM} for possibly \emph{large}\footnote{We remark that there are better results in the case of \emph{small} initial data, for the reasons to be explained below. See \cite{Tao:2000vba}.} initial data. This will motivate us to propose a new approach to the problem of gauge choice in \S \ref{subsec:intro:mainIdea}.


\subsubsection*{Importance of gauge choice}
There are at least three reasons why a judicious choice of gauge is needed in order to prove Theorem \ref{thm:intro:KM}:
\begin{description}
\item[{\rm (Hyperbolicity)}] To reveal the \emph{hyperbolicity}\footnote{In this work, we will interpret the notion of \emph{hyperbolicity} in a practical fashion and say that a PDE is \emph{hyperbolic} if its principal part is the wave equation. By `revealing the hyperbolicity of \eqref{eq:hyperbolicYM}', we mean reducing the dynamics of the Yang-Mills system to that of a system of wave equations. As we shall see below, this may involve solving elliptic, parabolic and/or transport equations for some variables.} of \eqref{eq:hyperbolicYM};
\item[{\rm (Null structure)}] To exhibit the `special structure' (namely, the \emph{null structure}) of \eqref{eq:hyperbolicYM};
\item[{\rm (Energy estimate)}] To utilize the conserved \emph{energy} $\bfE(t)$ to \emph{estimate} $\nrm{\rd_{x} A_{i}(t)}_{L^{2}_{x}}$.
\end{description}


Concerning (Hyperbolicity), observe that the top order terms of \eqref{eq:hyperbolicYM} at the level of $A_{\mu}$ have the form
\begin{equation*}
	\Box A_{\nu} - \rd^{\mu} \rd_{\nu} A_{\mu} = (\hbox{lower order terms}).
\end{equation*}

In an arbitrary gauge, due to the presence of the undesirable second order term $-\rd^{\mu} \rd_{\nu} A_{\mu}$, it is not even clear whether the equation for $A_{\mu}$ is hyperbolic (i.e., a wave equation). Therefore, in order to study \eqref{eq:hyperbolicYM} as a hyperbolic system of equations, the gauge should be chosen, at the very least,  in a way to reveal the hyperbolicity of \eqref{eq:hyperbolicYM}. We remark that this is analogous to the issue that the Yang-Mills heat flow is only \emph{weakly-parabolic}, to be discussed in \S \ref{subsec:intro:YMHF}.

Addressing the issue of (Hyperbolicity) suffices to prove local well-posedness of \eqref{eq:hyperbolicYM} for sufficiently regular initial data (see \cite{Segal:1979hg}, \cite{Eardley:1982fb}). However, it is still insufficient for Theorem \ref{thm:intro:KM}, because of the issue of (Null structure). After an appropriate choice of gauge, which does not have to be precise for the purpose of this heuristic discussion, the wave equation for the connection 1-form $A$ satisfying \eqref{eq:hyperbolicYM} is of the form
\begin{equation} \label{eq:intro:naiveModel} 
	\Box A = \calO(A, \rd A) + (\hbox{cubic and higher})
\end{equation}
where $\calO(A, \rd A)$ refers to a linear combination of bilinear terms in $A$ and $\rd_{t,x} A$.

At this point, we encounter an important difficulty of proving Theorem \ref{thm:intro:KM}: Strichartz estimates (barely, but in an essential way) fall short of proving $H^{1}_{x}$ local well-posedness of \eqref{eq:intro:naiveModel}, due to the well-known failure of the endpoint $L^{2}_{t} L^{\infty}_{x}$ estimate on $\bbR^{1+3}$. In fact, a counterexample, given by Lindblad \cite{Lindblad:1996ws}, demonstrates that even local existence may fail for a general equation of the form \eqref{eq:intro:naiveModel} with $H^{1}_{x}$ initial data\footnote{We mention the recent work \cite{MR2796047} of Gr\"unrock, which further clarifies the nature of Lindblad's counterexample. Namely, it is proved that local well-posedness for $\Box u = u \rd_{t} u$ (which was considered by Lindblad \cite{Lindblad:1996ws}) holds if the initial data belongs to the Fourier-Lebesgue space $\widehat{H}^{r}_{s}$ (defined by $\nrm{f}_{\widehat{H}^{r}_{s}} := \nrm{(1+\abs{\xi})^{s}\widehat{f}}_{L^{r'}_{\xi}}$) with $1 < r \leq 2$ and $s > \frac{2}{r}$. Observe that $\widehat{H}^{2}_{s} = H^{s}$, and as $(r, s) \to (1,2)$ the space $\widehat{H}^{r}_{s}$ almost reaches the scaling-critical space $\widehat{H}^{1}_{2}$.}. Such considerations indicate that a proof of Theorem \ref{thm:intro:KM} necessarily has to exploit the null structure of \eqref{eq:hyperbolicYM}, which distinguishes \eqref{eq:hyperbolicYM} from a general system of semi-linear equations of the similar form. 
Since the precise form of the wave equation for the connection 1-form $A$ is highly dependent on the gauge, it is crucial to make a suitable choice of gauge so as to reveal the structure needed to establish Theorem \ref{thm:intro:KM}. 

Once (Hyperbolicity) and (Null structure) are addressed, low regularity local well-posedness of \eqref{eq:hyperbolicYM} (in particular, Part (1) of Theorem \ref{thm:intro:KM}) can, in principle, be established. However, yet another difficulty remains in proving Part (2) of Theorem \ref{thm:intro:KM}, namely the issue of (Energy estimate). Had the conserved energy $\bfE(t)$ directly controlled $\nrm{\rd_{x}A_{i}(t)}_{L^{2}_{x}}$, finite energy global well-posedness would have followed immediately from $H^{1}_{x}$ local well-posedness. However, recalling the expression for the conserved energy
\begin{equation*}
	\bfE(t) = \frac{1}{2} \sum_{\mu < \nu} \nrm{\rd_{\mu} A_{\nu} - \rd_{\nu} A_{\mu} + \LieBr{A_{\mu}}{A_{\nu}}}_{L^{2}_{x}}^{2},
\end{equation*}
we see that in an arbitrary gauge, $\bfE(t)$ can only control a part of the full gradient of $A_{i}$: Namely, the curl of $A_{i}$, or $\nrm{\rd_{i} A_{j} - \rd_{j} A_{i}}_{L^{2}_{x}}$. Therefore, in order to prove Part (2) of Theorem \ref{thm:intro:KM} as well, the chosen gauge must have a structure which allows for utilizing $\bfE(t)$ to control the $L^{2}_{x}$ norm of the full gradient $\rd_{x} A_{i}(t)$.

\subsubsection*{Approach using the (local) Coulomb gauge: Proof of Klainerman-Machedon \cite{Klainerman:1995hz}}
We will now discuss the approach of Klainerman-Machedon \cite{Klainerman:1995hz} using the \emph{local Coulomb gauge}. As we will see, this approach addresses all of (Hyperbolicity), (Null structure) and (Energy estimate), but possesses the drawback of requiring localization in space-time, causing technical difficulties on the boundaries.

A key observation of Klainerman-Machedon \cite{Klainerman:1995hz} (which in fact goes back to the previous work \cite{Klainerman:1994jb} of Klainerman-Machedon on the related Maxwell-Klein-Gordon equations) was that under the (global) \emph{Coulomb gauge} $\rd^{\ell} A_{\ell} = 0$ imposed everywhere on $\bbR^{1+3}$, the issues of (Hyperbolicity) and (Null structure) are simultaneously resolved. That is:
\begin{itemize}
\item After solving elliptic equations for $A_{0}$ and $\rd_{0} A_{0}$, \eqref{eq:hyperbolicYM} reduce to a system of wave equations for $A_{i}$, and
\item The most dangerous quadratic nonlinearities of the wave equations can be shown to be composed of null forms.
\end{itemize}

More precisely, the wave equation for $A_{i}$ takes the form
\begin{equation*}
	\Box A_{i} = \calQ(\abs{\rd_{x}}^{-1} A ,A) + \abs{\rd_{x}}^{-1} \calQ(A, A) + \hbox{(Less dangerous terms)},
\end{equation*}
where each $\calQ$ is a linear combination of bilinear forms
\begin{equation*}
	Q_{jk}(\phi_{1}, \phi_{2}) = \rd_{j} \phi_{1} \rd_{k} \phi_{2} - \rd_{k} \phi_{1} \rd_{j} \phi_{2}, \quad 1 \leq j < k \leq 3,
\end{equation*}
which are particular examples of null forms, introduced by Klainerman \cite{Klainerman:tc} and Christodoulou \cite{MR820070} in the context of the small data global existence problem for nonlinear wave equations, and first used by Klainerman-Machedon \cite{Klainerman:ei} in the context of low regularity well-posedness. Improved estimates are available for such class of bilinear interactions (see \cite{Klainerman:ei}, \cite{Klainerman:1995vs} and etc.), and therefore the desired local well-posedness can be proved. 

The Coulomb gauge has an additional benefit that $\nrm{\rd_{x} A_{i}(t)}_{L^{2}_{x}}$ may be estimated by $\bfE(t)$ (provided that $A_{i}$ is sufficiently regular to start with), as the Coulomb gauge condition $\rd^{\ell} A_{\ell} = 0$ sets the part of $\rd_{x} A_{i}$ which is not controlled by $\bfE(t)$ (namely the \emph{divergence} of $A$, or $\rd^{\ell} A_{\ell}$, according to Hodge decomposition) to be exactly zero. In other words, the Coulomb gauge settles the issue of (Energy estimate) as well. 

Unfortunately, when the structural group $\LieGrp$ is \emph{non-abelian}, there is a fundamental difficulty in imposing the Coulomb gauge globally in space (i.e., on $\bbR^{3}$ for each fixed $t$). Roughly speaking, it is because when $\LieGrp$ is non-abelian, a gauge transform into the Coulomb gauge is given as a solution to a nonlinear elliptic system of PDEs, for which no good regularity theory is available in the large\footnote{In fact, it is possible to show, by a variational argument, that any $A_{i} \in L^{2}_{x}$ may be gauge transformed to a weak solution $\widetilde{A} \in L^{2}_{x}$ to the Coulomb gauge equation $\rd^{\ell} \widetilde{A}_{\ell} = 0$; see \cite{DellAntonio:1991ih}. The problem is that no further regularity of the gauge transform and $\widetilde{A}$ may be inferred, due to the lack of an appropriate regularity theory.}. A closely related phenomenon is \emph {the Gribov ambiguity} \cite{Gribov:1978eh}, which asserts non-uniqueness of a representative satisfying the Coulomb gauge equation $\rd^{\ell} A_{\ell} = 0$ in some equivalence class of connection 1-forms on $\bbR^{3}$ when $\LieGrp$ is non-abelian.

At a more technical level, this difficulty manifests in the fact that \emph{Uhlenbeck's lemma} \cite{Uhlenbeck:1982vna}, which is a standard result asserting the existence of a gauge transform (possessing sufficient regularity) into the Coulomb gauge, requires the curvature $F$ to be small in $L^{3/2}_{x}$. Note that this norm is invariant under the scaling \eqref{eq:YMscaling}, and therefore cannot be assumed to be small by scaling, unlike the energy $\bfE[\overline{\bfF}]$. To get around this problem, the authors of \cite{Klainerman:1995hz} work in what they call \emph{local Coulomb gauges} in small domains of dependence (in which the required norm of $F$ can be assumed small), and glue the local solutions together by exploiting the finite speed of propagation. The execution of this strategy is quite involved due to the presence of the constraint equations \eqref{eq:YMconstraint}. In particular, it requires a delicate boundary condition for $\Box A_{i}$ in order to mesh the analyses of the elliptic and hyperbolic equations arising from \eqref{eq:hyperbolicYM} in the local Coulomb gauge.

\subsubsection*{Approach using the temporal gauge}
A different route to the problem of gauge choice in the context of low regularity well-posedness was suggested by Tao in his paper \cite{Tao:2000vba}, where he proved $H^{s}_{x}$ local well-posedness for $s > 3/4$ (thus going even below the energy regularity) by working in the \emph{temporal gauge} $A_{0} = 0$, under the restriction that the $H^{s}_{x} \times H^{s-1}_{x}$ norm of $(\Aini_{i}, \Eini_{i})$ is \emph{small}. This gauge has the advantage of being easy to impose globally (as gauge transforms into the temporal gauge can be found by solving an ODE), and thus does not have the problem that the Coulomb gauge possesses. Indeed, it had been used by other authors, including Segal \cite{Segal:1979hg} and Eardley-Moncrief \cite{Eardley:1982fb}, to prove local and global well-posedness of \eqref{eq:hyperbolicYM} for (large) initial data with higher regularity (namely, $s \geq 2$). To reiterate this discussion in our framework, the temporal gauge resolves the issues of (Hyperbolicity) and (Null structure)\footnote{However, the issue of (Null structure) is not addressed fully in the sense that smallness of the initial data is needed.} raised above.

However, this gauge possesses its own drawback in that it fails to cope with initial data sets with a large $H^{s}_{x}$ norm for $3/4 < s \leq 1$\footnote{One reason is that it still relies on a Uhlenbeck-type lemma to set $\rd^{\ell} A_{\ell} = 0$ at $t=0$, which requires some sort of smallness of the initial data. There is also a technical difficulty in the Picard iteration argument which does not allow one to use the smallness of the length of the time-interval; ultimately, this originates from the presence of a time derivative on the right-hand side of the equation $\rd_{t} (\rd^{\ell} A_{\ell}) = - \LieBr{A^{\ell}}{\rd_{t} A_{\ell}}$ (which is equivalent to the equation $\covD^{\ell} F_{\ell 0} = 0$). See \cite{Tao:2000vba} for more details.}. Moreover, another drawback is that it is unclear how to deal with the issue of (Energy estimate), namely how $\nrm{\rd_{x} A_{i}(t)}_{L^{2}_{x}}$ may be controlled for every $t$ using the conserved energy $\bfE$.

\subsection{Main idea of our approach} \label{subsec:intro:mainIdea}
The purpose of this paper, along with the companion paper \cite{Oh:6stz7nRe}, is to present a new approach to the problem of gauge choice which does not possess the drawbacks of the previous methods. As such, this approach does not involve localization in space-time and works well for large initial data. Nevertheless, it is (at the very least) as effective as the previous choices of gauge, as it addresses all of (Hyperbolicity), (Null structure) and (Energy estimate) discussed above. As an application of our approach, we provide in this paper an alternative proof of \emph{finite energy global well-posedness} of \eqref{eq:hyperbolicYM}, relying on the techniques established in the companion paper \cite{Oh:6stz7nRe}.


Heuristically, the key idea of our approach is to {`smooth out'} the problem at hand in a {`geometric fashion'}. The expectation is that the problem of gauge choice for the `smoothed out problem' will be much easier thanks to the additional regularity. All the difficulties, then, are shifted to the problem of controlling the error generated by the smoothing procedure. That this is possible for a certain choice of smoothing procedure, based on a geometric (weakly-)parabolic PDE called the \emph{Yang-Mills heat flow}, is the main thesis of this work and \cite{Oh:6stz7nRe}.

In the following three subsections (\S \ref{subsec:intro:YMHF}--\S \ref{subsec:overview4GWP}), we will discuss how our approach deals with the issues of (Hyperbolicity), (Null structure) and (Energy estimate). More precisely, after a discussion on the Yang-Mills heat flow in \S \ref{subsec:intro:YMHF}, we will give a summary of the companion paper \cite{Oh:6stz7nRe} in \S \ref{subsec:intro:lwp}, in which we explain how the issues of (Hyperbolicity) and (Null structure) are resolved. Then an overview of the main ideas of the present paper in \S \ref{subsec:overview4GWP} will follow, addressing (Energy estimate). 

\begin{remark} 
The idea of using a smoothing procedure tailored to the particular geometry of the problem has proved fruitful in a number of settings. 
For example, in the well-known works \cite{MR0499948}, \cite{MR850408}, \cite{MR1998349} on extension of the Calder\'on-Zygmund theory to non-Euclidean settings\footnote{For an excellent modern survey and further references on this topic, we refer the reader to the monograph \cite{MR3154530}.}, such an idea underlies the proofs of key square function estimates. 
We also mention the work of Klainerman-Rodnianski \cite{MR2221254}, in which the linear heat equation was used (as in Stein \cite{MR0252961}) to develop a Littlewood-Paley theory for tensors on compact two-dimensional manifolds under very weak regularity hypotheses. This theory was applied in \cite{MR2125732} and \cite{MR2221255} to study the causal geometry of rough solutions to the Einstein equations.

A recent development of this idea that directly motivated our approach is the work of Tao \cite{Tao:2004tm}, in which it was proposed to use a \emph{nonlinear} geometric heat flow to deal with the problem of gauge choice in the context of the energy-critical wave map problem. This approach was put into use in the series of preprints \cite{Tao:2008wn} to develop a large energy theory of wave maps into a hyperbolic space $\bbH^{n}$. In this setting, one begins by solving the associated heat flow, in this case the \emph{harmonic map flow}, starting from a wave map restricted to a fixed $t$-slice. Then the key idea is that the harmonic map flow converges (under appropriate conditions) to a single point, same for every $t$, in the target as the heat parameter goes to $\infty$. For this trivial map at infinity, the canonical choice of gauge is clear; namely, one chooses the same orthonormal frame at each point on the domain. This choice is then parallel-transported back along the harmonic map flow. The resulting gauge is dubbed the \emph{caloric gauge}. This gauge proved to be quite useful, and the use of such gauge has also been successfully extended to the related problem of energy-critical Schr\"odinger maps as well, through the works \cite{Bejenaru:2011wy}, \cite{Smith:2011ef}, \cite{Smith:2010ui}, \cite{Smith:2011ty}, \cite{Dodson:2012uj} and \cite{Dodson:2013vh}. 

\end{remark}

\subsection{The Yang-Mills heat flow} \label{subsec:intro:YMHF}
Before delving into a more detailed exposition of our approach, let us first introduce the \emph{Yang-Mills heat flow} (or (YMHF) in short), which will play an important role. Consider a spatial connection 1-form $A_{i}(s)$ ($i=1,2,3$) on $\bbR^{3}$ parametrized by $s \in [0,s_{0}]$ ($s_{0} > 0$). We say that $A_{i}(s)$ is a \emph{Yang-Mills heat flow} if it satisfies the equation
\begin{equation} \label{eq:YMHF} \tag{YMHF}
	\rd_{s} A_{i} = \covD^{\ell} F_{\ell i} , \quad i=1,2,3.
\end{equation}

First introduced by Donaldson \cite{Donaldson:1985vh}, the Yang-Mills heat flow is the gradient flow for the \emph{Yang-Mills energy} on $\bbR^{3}$ (also referred to as the \emph{magnetic energy})
\begin{equation} \label{eq:Menergy}
\bfB[A_{i}] := \frac{1}{2} \sum_{1 \leq i < j \leq 3} \nrm{F_{ij}}_{L^{2}_{x}}^{2}
\end{equation}
and plays an important role in differential geometry. It has been a subject of an extensive research by itself; see, e.g., \cite{Donaldson:1985vh}, \cite{Rade:1992tu}, \cite{Struwe:1994is}, \cite{Charalambous:2010vt}. 

Our intention is to use \eqref{eq:YMHF} as a geometric smoothing device for \eqref{eq:hyperbolicYM}. One must be careful, however, since \eqref{eq:YMHF} is \emph{not} strictly parabolic as it stands at the level of $A_{i}$. Indeed, expanding \eqref{eq:YMHF} in terms of $A_{i}$, the top order terms look like
\begin{equation*}
	\rd_{s} A_{i} = \lap A_{i} - \rd^{\ell} \rd_{i} A_{\ell} + (\hbox{lower order terms}),
\end{equation*}
where $\lap A_{i} - \rd_{i} \rd^{\ell} A_{\ell}$ possesses non-trivial kernel (any $A_{i} = \rd_{i} \phi$, for $\phi$ a $\LieAlg$-valued function). Due to this fact, the Yang-Mills heat flow is said to be only \emph{weakly-parabolic}.

The culprit of the non-parabolicity of \eqref{eq:YMHF} turns out to be the gauge covariance of the term $\covD^{\ell} F_{\ell i}$, which suggests that it can be remedied by studying the gauge structure of the Yang-Mills heat flow in detail. Upon inspection, we see that the gauge structure of the equations \eqref{eq:YMHF} is somewhat restrained, as it is covariant only under gauge transforms that are \emph{independent} of $s$. To deal with the problem of non-parabolicity, we will begin by fixing this issue, i.e., reformulating the Yang-Mills heat flow in a way that is covariant under gauge transforms which may also depend on the $s$-variable.

Along with $A_{i}$, let us also add a component $A_{s}$ and consider $A_{a}$ ($a = x^{1}, x^{2}, x^{3}, s$), which is a connection 1-form on the product manifold $\bbR^{3} \times [0,s_{0}]$. Corresponding to $A_{s}$, we also introduce the \emph{covariant derivative} along the $\rd_{s}$-direction
\begin{equation*}
	\covD_{s} := \rd_{s} + \LieBr{A_{s}}{\cdot}.
\end{equation*}

A \emph{covariant Yang-Mills heat flow} is a solution $A_{a}$ to the following system of equations.
\begin{equation} \label{eq:cYMHF} \tag{cYMHF}
	F_{si} = \covD^{\ell} F_{\ell i}, \quad i = 1,2,3,
\end{equation}
where $F_{si}$ is the commutator between $\covD_{s}$ and $\covD_{i}$, given by the formula
\begin{equation} \label{eq:intro:Fsi}
	F_{si} = \rd_{s} A_{i} - \rd_{i} A_{s} + \LieBr{A_{s}}{A_{i}}.
\end{equation}

The system \eqref{eq:cYMHF} is underdetermined for $A_{a}$, and therefore requires an additional gauge condition (typically on $A_{s}$) in order to be solved. Note that the original Yang-Mills heat flow \eqref{eq:YMHF} is a special case of \eqref{eq:cYMHF}, namely when $A_{s} = 0$. On the other hand, choosing $A_{s} = \rd^{\ell} A_{\ell}$, the top order terms of \eqref{eq:cYMHF} becomes
\begin{equation*}
	\rd_{s} A_{i} - \rd_{i} \rd^{\ell} A_{\ell} = \lap A_{i} - \rd^{\ell} \rd_{i}  A_{\ell} +(\hbox{lower order terms}).
\end{equation*}

The term $\rd^{\ell} \rd_{i}  A_{\ell}$ on each side are cancelled, and we are consequently left with a \emph{strictly} parabolic system of equations for $A_{i}$. In other words, the weakly-parabolic system \eqref{eq:YMHF} is equivalent to a strictly parabolic system of equations, connected via gauge transforms for \eqref{eq:cYMHF}.

Henceforth, the gauge condition $A_{s} = 0$ will be referred to as the \emph{caloric gauge}, in deference to the term introduced by Tao in his work \cite{Tao:2004tm}. The condition $A_{s} = \rd^{\ell} A_{\ell}$ will be dubbed the \emph{DeTurck gauge}, as the procedure outlined above may be viewed as a geometric reformulation of the standard DeTurck trick, introduced first by DeTurck \cite{DeTurck:1983ts} in the context of the Ricci flow and adapted to the Yang-Mills heat flow by Donaldson \cite{Donaldson:1985vh}.

\subsection{Overview of \cite{Oh:6stz7nRe}: Proof of local-wellposedness} \label{subsec:intro:lwp}
Acquainted with the covariant formulation of the Yang-Mills heat flow, we are ready to return to the task of describing our approach in more detail. We will begin by providing a short overview of \cite{Oh:6stz7nRe}, in which local well-posedness is proved for initial data sets with $\dot{H}^{1}_{x}$ regularity; for a more precise statement, see Theorem \ref{thm:lwp4YM}. In particular, we will explain how the issues of (Hyperbolicity) and (Null structure) raised in \S \ref{subsec:intro:motivation} are resolved in our approach.

To avoid too many technical details, we will treat here the simpler problem of proving an {\it a priori} bound of a solution to \eqref{eq:hyperbolicYM} in the temporal gauge. That is, for some interval $I := (-T_{0}, T_{0}) \subset \bbR$, we will presuppose the existence of a solution $\Atemp_{\mu}$ to \eqref{eq:hyperbolicYM} in the temporal gauge on $I \times \bbR^{3}$ and aim to establish an estimate of the form
\begin{equation*}
	\nrm{\rd_{t,x} \Atemp_{\mu}}_{C_{t} (I, L^{2}_{x})}  \leq C \sum_{i=1,2,3} \nrm{(\Aini_{i}, \Eini_{i})}_{\dot{H}^{1}_{x} \times L^{2}_{x}}
\end{equation*}
where $\Atemp_{i} (t=0) = \Aini_{i}$, $\rd_{t} \Atemp_{i} (t=0) = \Eini_{i}$. The dagger signifies that $\Atemp_{\mu}$ is a representative in the temporal gauge.

\subsubsection*{Geometric smoothing of $\Atemp_{\mu}$ by the (dynamic) Yang-Mills heat flow}
The first step of the proof is to smooth out the solution $\Atemp_{\mu}$, essentially using the covariant Yang-Mills heat flow. Let us introduce a new variable $s \in [0,s_{0}]$, and extend $\Atemp_{\mu} = \Atemp_{\mu}(t,x)$ to $A_{\bfa} = A_{\bfa} (t,x,s)$ (where $\bfa = t, x^{1}, x^{2}, x^{3}, s$) on $I \times \bbR^{3} \times [0,s_{0}]$ by solving the equations
\begin{equation} \label{eq:dYMHF} \tag{dYMHF}
	F_{s \mu} = \covD^{\ell} F_{\ell \mu}, \quad \mu = 0,1,2,3
\end{equation}
with an appropriate choice of $A_{s}$, starting with $A_{\mu} (s=0) = \Atemp_{\mu}$. Note that this system is \eqref{eq:cYMHF} appended with the equation $F_{s0} = \covD^{\ell} F_{\ell 0}$ for $A_{0}$; it will be referred to as the \emph{dynamic Yang-Mills heat flow} or, in short, (dYMHF). Using Picard iteration, these equations can be solved provided that $s_{0} > 0$ is small enough.

\subsubsection*{The hyperbolic-parabolic-Yang-Mills system}
As a result, we arrive at a connection 1-form $A_{\bfa}$ on $I \times \bbR^{3} \times [0,s_{0}]$ which solves the following system of equations.
\begin{equation} \label{eq:HPYM} \tag{HPYM}
\left\{
\begin{aligned}
	F_{s \mu} &= \covD^{\ell} F_{\ell \mu} \hspace{.25in} \hbox{ on } \hspace{.1in} I \times \bbR^{3} \times [0,s_{0}], \\
	\covD^{\mu} F_{\mu \nu} &= 0 \hspace{.5in} \hbox{ along } I \times \bbR^{3} \times \set{0}.
\end{aligned}
\right.
\end{equation}

We will refer to this as the \emph{Hyperbolic-Parabolic-Yang-Mills system} or, in short, \eqref{eq:HPYM}. This will be the system of equations that we will mainly work with in place of \eqref{eq:hyperbolicYM}. Accordingly, instead of $\Atemp_{\mu}$, we will estimate $\Alow_{\mu} := A_{\mu}(s=s_{0})$, which may be viewed as a smoothed-out version of $\Atemp_{\mu}$, and the error $\rd_{s} A_{\mu}(s)$ (for $s \in (0, s_{0})$) in between. 

\subsubsection*{Gauge choices for \eqref{eq:HPYM}: DeTurck and caloric-temporal gauges}
The next step consists of estimating $\rd_{s} A_{\mu}$ and $\Alow_{\mu}$ by using the equations arising from \eqref{eq:HPYM}. Basically, the strategy is to first use the parabolic (in the $s$-direction) equations to estimate the new variables $\rd_{s} A_{\mu}, \Alow_{\mu}$ at $t=0$, and then to use the hyperbolic (in the $t$-directions) equations to estimate their evolution in $t$. As \eqref{eq:HPYM} is manifestly gauge covariant (under gauge transforms fully dependent on all the variables $x^{0}, x^{1}, x^{2}, x^{3}, s$), we need to fix a gauge in order to carry out such analyses. 

As it turns out, a different gauge choice is needed to achieve each goal. For the purpose of deriving estimates at $t=0$, which should be compatible with the analysis of the $t$-evolution to follow, it is essential to exploit the smoothing property of \eqref{eq:dYMHF}. As such, the gauge of choice here is the \emph{DeTurck gauge} $A_{s} = \rd^{\ell} A_{\ell}$. On the other hand, completely different considerations are required for estimating the $t$-evolution, and here the gauge condition we impose is
\begin{equation} \label{eq:caloricTemporal}
\left\{
\begin{aligned}
	&A_{s} = 0 \quad \hbox{ on } I \times \bbR^{3} \times (0,s_{0}), \\
	&\Alow_{0} = 0 \quad \hbox{ on } I \times \bbR^{3} \times \set{s_{0}}.
\end{aligned}
\right.
\end{equation}
which will be referred to as the \emph{caloric-temporal gauge}. In practice, the DeTurck gauge will be first used to obtain estimates at $t=0$, and then we will perform a gauge transformation\footnote{A technical remark: Performing a gauge transformation $U = U(t,x,s)$ from the DeTurck gauge to the caloric gauge, with the additional condition that $U(t=0, s=0) = \mathrm{Id}$, corresponds exactly to carrying out the standard DeTurck trick \cite{Donaldson:1985vh}. However, this is inappropriate for our purposes, as it turns out that this gauge transform is not bounded on $H^{m}_{x}$ for $m > 1$; as such, it cannot retain the smoothing estimates proved in the DeTurck gauge. Instead, we will use the gauge transform for which $U(t=0, s=1) = \mathrm{Id}$. Under such gauge transform, $\Alow_{i}(t=0)$ remains the same, and thus smooth, at the cost of introducing a non-trivial gauge transform for the initial data at $t=0, s=0$. In some sense, this procedure is an analogue of the Uhlenbeck's lemma in our approach.} into the caloric-temporal gauge to carry out the analysis of the evolution in $t$. We remark that finding such gauge transform is always possible, as it amounts to simply solving a hierarchy of linear ODEs.

A brief discussion on the motivation behind our choice of the \emph{caloric-temporal gauge} is in order. For $\rd_{s} A_{\mu}$ on $I \times \bbR^{3} \times (0, s_{0})$, let us begin by considering the following rearrangement of the formula \eqref{eq:intro:Fsi} for $F_{si}$:
\begin{equation} \label{eq:intro:covHodge}
	\rd_{s} A_{i} = F_{si} + \covD_{i} A_{s}.
\end{equation}

A simple computation\footnote{The identity $\covD^{\ell} F_{s \ell} = 0$ follows from \eqref{eq:cYMHF} and $\covD^{\ell} \covD^{k} F_{\ell k} = 0$, which is proved simply by anti-symmetrizing the indices $\ell, k$.} shows that $F_{si}$ is \emph{covariant-divergence-free}, i.e., $\covD^{\ell} F_{s \ell} = 0$. This suggests that \eqref{eq:intro:covHodge} may be viewed (heuristically) as a \emph{covariant Hodge decomposition} of $\rd_{s} A_{i}$, where $F_{si}$ is the covariant-divergence-free part and $\covD_{i} A_{s}$, being a pure covariant-gradient term, may be regarded as the `covariant-curl-free part' (although, strictly speaking, the covariant-curl does not vanish but is only of lower order for this term). Recall that the Coulomb gauge condition, which had a plenty of good properties as discussed earlier, is equivalent to having zero curl-free part. Therefore, to imitate the Coulomb gauge as closely as possible, we are motivated to set $A_{s} = 0$ on $I \times \bbR^{3} \times (0,s_{0})$; incidentally, this turns out to be the \emph{caloric} gauge condition discussed earlier. 

On the other hand, at $s=s_{0}$, the idea is that $\Alow_{\mu}$ possesses \emph{smooth} initial data $(\Alow_{i}, \Flow_{0i})(t=0)$, thanks to the smoothing property of \eqref{eq:dYMHF}. Therefore, we expect that the problem of gauge choice for $\Alow_{\mu}$ is not as delicate as the original problem; as such, we choose the temporal gauge condition $\Alow_{0} = 0$, which is easy to impose yet sufficient for the analogous problem with smoother initial data, as the previous works \cite{Segal:1979hg}, \cite{Eardley:1982fb} had shown.

\subsubsection*{Resolution of the issues of (Hyperbolicity) and (Null structure)}
With the caloric-temporal gauge, we are finally ready to describe how the issues of (Hyperbolicity) and (Null structure) are resolved in our approach. Let us begin by introducing the \emph{Yang-Mills tension field} $w_{\nu} (s):= \covD^{\mu} F_{\nu \mu}(s)$, which measures the extent of failure of $A_{\mu}(s)$ to satisfy \eqref{eq:hyperbolicYM}. Then we may derive the following system of equations \cite[Appendix A]{Oh:6stz7nRe}:
\begin{align}
	\covD_s w_\nu 
	= & \covD^\ell \covD_\ell w_\nu + 2 \LieBr{\tensor{F}{_\nu^\ell}}{w_\ell} + 2 \LieBr{F^{\mu \ell}}{\covD_{\mu} F_{\nu \ell} + \covD_{\ell} F_{\nu \mu}}, 
	\label{eq:covParabolic4w} \\ 
	 \covD^\mu \covD_\mu F_{s i} 
 	= & 2 \LieBr{\tensor{F}{_s^\mu}}{F_{i \mu}} - 2 \LieBr{F^{\mu \ell}}{\covD_\mu F_{i \ell} + \covD_\ell F_{i \mu}} 
		- \covD^\ell \covD_\ell w_i + \covD_i \covD^\ell w_\ell - 2 \LieBr{\tensor{F}{_i^\ell}}{w_\ell},  
	\label{eq:hyperbolic4F} \\
	\covDlow^{\mu}\Flow_{\nu \mu} = & \wlow_{\nu}. 
	\label{eq:hyperbolic4Alow}
\end{align}

The underlines of \eqref{eq:hyperbolic4Alow} signify that each variable is restricted to $\set{s=s_{0}}$. Furthermore, $w_{\nu} \equiv 0$ at $s=0$, for all $\nu = 0,1,2,3$.

The parabolic equation \eqref{eq:covParabolic4w} can be used to derive estimates for the Yang-Mills tension field $w_{\mu}$. It is important to note that its data at $s=0$ is \emph{zero}, thanks to the fact that $A_{\mu}(s=0)$ satisfies \eqref{eq:hyperbolicYM}. Moreover, note that $w_{0} = -F_{s0}$, which is equal to $- \rd_{s} A_{0}$ thanks to the caloric gauge condition $A_{s} = 0$. In conclusion, after solving the parabolic equation \eqref{eq:covParabolic4w}, the dynamics of \eqref{eq:HPYM} is reduced to that of the variables $F_{si} = \rd_{s} A_{i}$ (again due to $A_{s} = 0$) and $\Alow_{i}$. These are, in turn, estimated by \eqref{eq:hyperbolic4F}, which is a wave equation for $F_{si}$, and \eqref{eq:hyperbolic4Alow}, which is the Yang-Mills equation with a source $\wlow_{\nu}$ for $\Alow_{\mu}$ under the temporal gauge $\Alow_{0} = 0$. This shows the hyperbolicity of \eqref{eq:hyperbolicYM}, which takes care of (Hyperbolicity). 

Next, let us address the issue of exhibiting null forms, i.e., (Null structure). Let us begin by observing that for \eqref{eq:hyperbolic4Alow}, \emph{no null form is needed} to close the estimates; this is because $(\Alow_{i}, \Flow_{0i})(t=0)$ has been smoothed out by \eqref{eq:dYMHF} as mentioned earlier. For \eqref{eq:hyperbolic4F}, on the other hand, there turns out to be a single term which cannot be dealt with simply by Strichartz estimates, namely
\begin{equation*}
	2\LieBr{A_{\ell} - \Alow_{\ell}}{\rd^{\ell} F_{si}}.
\end{equation*}

If $A_{\ell} - \Alow_{\ell}$ were divergence-free, i.e., $\rd^{\ell} (A_{\ell} - \Alow_{\ell}) = 0$, then an argument of Klainerman-Machedon \cite{Klainerman:1994jb}, \cite{Klainerman:1995hz} would show that this nonlinearity may be rewritten as a linear combination of null forms $Q_{jk}(\abs{\rd_{x}}^{-1} (A- \Alow), F_{si})$. Although this is not strictly true, we have 
\begin{equation*}
A_{\ell} - \Alow_{\ell} = - \int_{0}^{s_{0}} F_{s\ell} (s) \, \ud s
\end{equation*}
thanks to the condition $A_{s} = 0$, where $F_{s\ell}$ is \emph{covariant-divergence-free}, i.e., $\covD^{\ell} F_{s \ell} = 0$. This suffices for a variant of the argument of Klainerman-Machedon to work, settling the issue of (Null structure)\footnote{¥}.

Provided that $s_{0}, \abs{I}$ are sufficiently small\footnote{In the Coulomb gauge, the equation for $A_{0}$ is elliptic and therefore smallness of the time interval $I$ cannot be utilized to solve for $A_{0}$ using perturbation; in \cite{Klainerman:1995hz}, the authors exploit the spatial localization to overcome this issue. For \eqref{eq:HPYM} in the caloric-temporal gauge, $A_{0}$ estimated by integrating $F_{s0} = \rd_{s} A_{0}$, where the latter variable satisfies a parabolic equation. For this, smallness of $s_{0}$ can be used, and thus the estimates are still global on $\bbR^{3}$.}, an analysis of \eqref{eq:HPYM} using the gauge conditions indicated above leads to estimates for $\rd_{s} A_{i}, \Alow_{i}$ in the caloric-temporal gauge, such as
\begin{equation} \label{eq:intro:est4HPYM}
\left\{
\begin{aligned}
	\sup_{0 < s < s_{0}} s^{(m+1)/2} \nrm{\rd_{x}^{(m-1)} \rd_{t,x} (\rd_{s} A_{i})(s)}_{C_{t} (I, L^{2}_{x})}
	 \leq  C_{m} \sum_{j=1,2,3} \nrm{(\Aini_{j}, \Eini_{j})}_{\dot{H}^{1}_{x} \times L^{2}_{x}} \\
	\sup_{t \in I} \bb( \int_{0}^{s_{0}} s^{m+1} \nrm{\rd_{x}^{(m-1)} \rd_{t,x} (\rd_{s} A_{i})(t, s)}_{L^{2}_{x}}^{2} \, \frac{\ud s}{s} \bb)^{1/2}
	 \leq  C_{m} \sum_{j=1,2,3} \nrm{(\Aini_{j}, \Eini_{j})}_{\dot{H}^{1}_{x} \times L^{2}_{x}} \\
	s_{0}^{(k-1)/2} \nrm{\rd_{x}^{(k-1)} \rd_{t,x} \Alow_{i}}_{C_{t} (I, L^{2}_{x})} 
	 \leq  C_{k} \sum_{j=1,2,3} \nrm{(\Aini_{j}, \Eini_{j})}_{\dot{H}^{1}_{x} \times L^{2}_{x}}
\end{aligned}
\right.
\end{equation}
up to some integers $m_{0}, k_{0} > 1$, i.e., $1 \leq m \leq m_{0}$, $1 \leq k \leq k_{0}$. We remark that the weights of $s$ are dictated by scaling. 

\subsubsection*{Returning to $\Atemp_{\mu}$} The only remaining step is to translate \eqref{eq:intro:est4HPYM} to the desired estimate for $\nrm{\rd_{t,x} \Atemp_{\mu}}_{C_{t} (I, L^{2}_{x})}$. The first issue arising in this step is that the naive approach of integrating the estimates \eqref{eq:intro:est4HPYM} in $s$ \emph{fails} to bound $\nrm{\rd_{t,x} \Atemp_{\mu}}_{C_{t} (I, L^{2}_{x})}$, albeit only by a logarithm. To resolve this issue, we take the weakly-parabolic equations
\begin{equation*}
	\rd_{s} A_{i} = \lap A_{i} - \rd^{\ell} \rd_{i} A_{\ell} + (\hbox{lower order terms}).
\end{equation*}
differentiate by $\rd_{t,x}$, multiply by $\rd_{t,x} A_{i}$ and then integrate the highest order terms by parts over $\bbR^{3} \times [0, s_{0}]$. This procedure, combined with the $L^{2}_{s}$-type estimates of \eqref{eq:intro:est4HPYM}, overcome the logarithmic divergence.

Another issue is that \eqref{eq:intro:est4HPYM}, being in the caloric-temporal gauge, is in a different gauge from the temporal gauge along $s=0$. Therefore, we are required to control the gauge transform back to the temporal gauge along $s=0$, for which appropriate estimates for $A_{0}(s=0)$ in the caloric-temporal gauge are needed. These are obtained ultimately as a consequence of the analysis of the hyperbolic equations of \eqref{eq:HPYM} (Strichartz estimates, in particular, are used). 

\subsection{Overview of the present paper: Finite energy global well-posedness} \label{subsec:overview4GWP}
In the work of Klainerman-Machedon \cite{Klainerman:1995hz}, as pointed out earlier, finite energy global well-posedness was an easy corollary of the $H^{1}_{x}$ local well-posedness proof, thanks to the fact that in the (local) Coulomb gauge, the conserved energy $\bfE(t)$ controls $\nrm{(A_{i}, F_{0i})(t)}_{H^{1}_{x} \times L^{2}_{x}}$. However, in the temporal gauge, making use of the conserved energy $\bfE(t)$ is not as straightforward since $\bfE(t)$ only controls certain components (namely, the curl) of the full gradient of $A_{i}(t)$. We remind the reader that this was referred to as the issue of (Energy estimate) in \S \ref{subsec:intro:motivation}.

In this paper, we address this issue by using again the caloric-temporal gauge. Instead of showing that the energy $\bfE(t)$ controls $\nrm{(A_{i}, F_{0i})(t)}_{H^{1}_{x} \times L^{2}_{x}}$ for every $t$, which is likely to be false in our gauge, we will prove a slightly weaker statement, namely that $\bfE(t)$ controls the \emph{growth} of $\nrm{(A_{i}, F_{0i})(t)}_{H^{1}_{x} \times L^{2}_{x}}$ as $\abs{t} \to \pm \infty$; in particular, $\nrm{(A_{i}, F_{0i})(t)}_{H^{1}_{x} \times L^{2}_{x}} < \infty$ for every $t \in \bbR$. Our proof of finite energy global well-posedness will proceed roughly in three steps, each of which uses the conserved energy $\bfE(t)$ in a crucial way.

\vskip.5em
\noindent {\it Step 1.} We start with a solution $\Atemp_{\mu}$ to \eqref{eq:hyperbolicYM} in the temporal gauge on $(-T_{0}, T_{0}) \times \bbR^{3}$. As in the proof of local well-posedness, the first step is to solve \eqref{eq:dYMHF} to extend $\Atemp_{\mu}$ to a solution $A_{\bfa}$ to \eqref{eq:HPYM}. {\it A priori}, however, it is not clear whether it is possible to solve \eqref{eq:dYMHF} on a uniform $s$-interval when $T_{0}$ is large.

To illustrate, suppose that $\Atemp_{\mu}$ does not extend past the time $T_{0}$. Then from the local well-posedness statement, it is necessary that
\begin{equation*}
\nrm{\rd_{t,x} \Atemp_{\mu}(t)}_{L^{2}_{x}} \to \infty \quad \hbox{ as } t \to T_{0}.
\end{equation*}

Because of this, the size of the $s$-interval on which \eqref{eq:dYMHF} can be solved by perturbative methods shrinks as $t \to T_{0}$. As a consequence, there might not exist a non-trivial interval $[0,s_{0}]$ on which \eqref{eq:dYMHF} can be solved for every $t \in (-T_{0}, T_{0})$.

However, such a scenario is ruled out thanks to the conserved energy $\bfE(t)$, and \eqref{eq:dYMHF} can be solved in a uniform manner globally in time. More precisely, it is possible to show that there exists $s_{0} > 0$ depending only on $\bfE(t)$ such that \eqref{eq:dYMHF} on a fixed $t$-slice can be solved on an interval $[0,s_{0}]$. As $\bfE(t)$ is conserved, this shows that $\Atemp_{\mu}$ can be extended to a solution $A_{\bfa}$ to \eqref{eq:HPYM} on $(-T_{0}, T_{0}) \times \bbR^{3} \times [0,s_{0}]$.

\vskip.5em
\noindent {\it Step 2.} With a solution $A_{\bfa}$ of \eqref{eq:HPYM} in hand, we impose the caloric-temporal gauge condition via an appropriate gauge transform. We wish to demonstrate that the conserved energy\footnote{For a solution $A_{\bfa}$ to \eqref{eq:HPYM}, $\bfE(t)$ is defined to be the conserved energy of $A_{\mu}$ at $(t, s=0)$. Note that this quantity is gauge-invariant.} $\bfE(t)$ controls the appropriate fixed-time norms of the dynamic variables, which in this case are $\Alow_{i}$ and $F_{si} = \rd_{s} A_{i}$. 

The key observation is that $\nrm{\covD_{x}^{(k)} F_{\mu \nu}(t, s)}_{L^{2}_{x}}$ is estimated (with an appropriate weight of $s$) by $\bfE(t)$, thanks to covariant parabolic estimates. In particular, $\nrm{\covD_{x}^{(k)} \Flow_{0i}(t)}_{L^{2}_{x}}$ is under control, where $\Flow_{\mu \nu}$ is the connection 2-form restricted to $\set{s=s_{0}}$. As the temporal gauge condition $\Alow_{0} = 0$ holds, we have $\Flow_{0i} = \rd_{t} \Alow_{i}$; therefore, the preceding norm may be integrated in $t$ to control the growth of $\nrm{\rd_{x}^{(k)} \Alow_{i}(t)}_{L^{2}_{x}}$ for $t \in (-T_{0}, T_{0})$. On the other hand, as $F_{si} = \covD^{\ell} F_{\ell i}$ is already of the form $\covD_{x} F_{\mu \nu}$, we can use the conserved energy $\bfE(t)$ to control the appropriate (fixed-time) norms of $F_{si}(t)$ as well, for each $t \in (-T_{0}, T_{0})$. Since $F_{si} = \rd_{s} A_{i}$ thanks to the caloric gauge condition $A_{s} = 0$, we remark that these two estimates can be put together to prove a controlled growth of $\nrm{(A_{i}, F_{0i})(t)}_{H^{1}_{x} \times L^{2}_{x}}$.

\vskip.5em
\noindent{\it Step 3.} Finally, we must unwind all the gauge transformations and return to the solution $\Atemp_{\mu}$ in the temporal gauge. As in the last step of the proof of local well-posedness, this requires estimating $A_{0}$ along $s=0$ in the caloric-temporal gauge, where an important ingredient for the latter is the estimates obtained from the hyperbolic equations of \eqref{eq:HPYM}. Iterating the techniques developed in \cite{Oh:6stz7nRe} for proving local well-posedness on a short time interval, coupled with some new estimates arising from the conserved energy $\bfE$, leads to the desired estimates.
\vskip.5em
For a more rigorous overview of the whole argument of the present paper, we refer the reader to Section \ref{sec:reduction}. There, the Main Theorem is reduced to Theorems \ref{thm:idEst}--\ref{thm:dynEst}, which essentially correspond to Steps 1--3 in the respective order.

\subsection{Other applications of the Yang-Mills heat flow approach} \label{subsec:intro:apps}
In this subsection, we briefly comment on possible future applications of our approach.

\vskip.5em
\noindent{\it Further applications to the Yang-Mills equations.}
As no space-time localizations are involved, our approach has the advantage that Fourier-analytic techniques can be used relatively easily in the large data setting. We remark also that there is no conceptual difficulty in extending our approach to dimensions other than (1+3). 

In a forthcoming work \cite{Oh:uq}, we apply the Yang-Mills heat flow approach to prove local well-posedness of the Yang-Mills equations on $\bbR^{1+4}$ with initial data $(\Aini_{i}, \Eini_{i}) \in H^{1+\eps}_{x} \times H^{\eps}_{x}$ of any size, for any $\eps > 0$. Note that this is almost optimal, in the sense that the scaling critical Sobolev regularity for this problem is $H^{1}_{x} \times L^{2}_{x}$ (energy-critical). We use the Fourier-analytic tools developed in Klainerman-Tataru \cite{Klainerman:1999do}, which were used for establishing the small data case in the Coulomb gauge. We also use the observation that not only $\LieBr{A^{\ell} - \Alow^{\ell}}{\rd_{\ell} F_{si}}$ (as discussed in \S \ref{subsec:intro:lwp}), but all quadratic terms in the wave equation for $F_{si}$ have null structure in the caloric-temporal gauge, modulo cubic and higher terms; see \cite[Section 2.2]{thesis}. We believe that the same method will apply to the Yang-Mills equations on $\bbR^{1+3}$ with initial data $(\Aini_{i}, \Eini_{i}) \in H^{3/4+\eps}_{x} \times H^{-1/4+\eps}_{x}$  for any $\eps > 0$, which will generalize the results of Tao \cite{Tao:2000vba} to large data.

The result of \cite{Oh:uq} may be regarded as the first step towards a long time, large data theory of the energy-critical Yang-Mills equations on $\bbR^{1+4}$, which is a major open problem at the moment. It is interesting to note that the analysis in the companion paper \cite{Oh:6stz7nRe} leading to \emph{local} well-posedness relies only on the \emph{local} theory of the Yang-Mills heat flow, whereas we use the \emph{global} theory of the Yang-Mills heat flow in the present paper (like those in R{\aa}de \cite{Rade:1992tu}) in order to prove \emph{global} well-posedness of the Yang-Mills equations. This suggests that in order to study the long time behavior of the energy-critical Yang-Mills equations using our approach, we would need to draw more from the global theory of the energy-critical Yang-Mills heat flow, which have been developed extensively in geometric analysis; see, e.g., \cite{Donaldson:1985vh}, \cite{Struwe:1994is}. Note that in the case of energy-critical wave map equations on $\bbR^{1+2}$, the use of the caloric gauge \cite{Tao:2008wn} relied on the global theory of the harmonic map heat flow of Eells-Sampson \cite{ES}.

In view of the geometric nature of our approach, it is also possible that the Main Theorem of this paper be extended to a more general class of backgrounds, namely, (1+3)-dimensional globally hyperbolic Lorentzian manifolds. For this purpose, a recent geometric bilinear estimates proved in \cite{Klainerman:2005kj} may be useful. We note that for the Yang-Mills equations on a general (1+3)-dimensional smooth, globally hyperbolic Lorentzian manifold, global existence for sufficiently smooth initial data was proved in Chru\'sciel-Shatah \cite{MR1604914}, by adapting the arguments in Eardley-Moncrief \cite{Eardley:1982fb}. 

\vskip.5em
\noindent{\it Applications to other non-abelian gauge field theories.}
The dynamic Yang-Mills heat flow $F_{s \mu} = \covD^{\ell} F_{\ell \mu}$ provides a natural means to smooth out any space-time connection 1-form $A_{\mu}$ while keeping the gauge structure, whether $A_{\mu}$ solves the Yang-Mills equations $\covD^{\mu} F_{\nu \mu} = 0$ or not. Furthermore, the caloric-temporal gauge condition \eqref{eq:caloricTemporal} makes sense for any solution to the dynamic Yang-Mills heat flow, independent of the Yang-Mills equations on $\set{s = 0}$. These considerations suggest that \eqref{eq:caloricTemporal} is a general gauge condition, like the temporal and Coulomb gauges, which can be imposed for any non-abelian gauge field theory. In view of the desirable properties of the caloric-temporal gauge exhibits for the Yang-Mills equations (as discussed in \S \ref{subsec:intro:lwp} and \S \ref{subsec:overview4GWP}), we expect that the caloric-temporal gauge would be useful in this broader context as well. 

\subsection{Statement of the Main Theorem} \label{subsec:intro:mainThm}
We now give the precise statement of our main result. Let us begin by defining the class of initial data sets of interest.
\begin{definition}[Admissible $H^{1}$ initial data set] \label{def:admID}
We say that a pair $(\Aini_{i}, \Eini_{i})$ of 1-forms on $\bbR^{3}$ is an \emph{admissible $H^{1}$ initial data set} for the Yang-Mills equations if the following conditions hold:
\begin{enumerate}
\item $\Aini_{i} \in \dot{H}_{x}^{1} \cap L^{3}_{x}$ and $\Eini_{i} \in L^{2}$,
\item The \emph{constraint equation}
\begin{equation*}
	\rd^{\ell} \Eini_{\ell} + \LieBr{\Aini^{\ell}}{\Eini_{\ell}} = 0,
\end{equation*}
	holds in the distributional sense.
\end{enumerate}
\end{definition}

Let us also define the notion of \emph{admissible solutions.}
\begin{definition}[Admissible solutions] \label{def:admSol}
Let $I \subset \bbR$. We say that a generalized solution $A_{\mu}$ to the Yang-Mills equations \eqref{eq:hyperbolicYM} in the temporal gauge $A_{0} = 0$ defined on $I \times \bbR^{3}$ is \emph{admissible} if 
\begin{equation*}
	A_{\mu} \in C_{t}(I, \dot{H}^{1}_{x} \cap L^{3}_{x}), \quad \rd_{t} A_{\mu} \in C_{t}(I, L^{2}_{x})
\end{equation*}
and $A_{\mu}$ can be approximated by classical solutions in the temporal gauge in the above topology.
\end{definition}

We begin with a $H^{1}_{x}$ local well-posedness theorem, whose proof using the Yang-Mills heat flow has been given in the companion paper \cite{Oh:6stz7nRe}.
\begin{theorem}[$H^{1}_{x}$ local well-posedness {\cite[Main Theorem]{Oh:6stz7nRe}}] \label{thm:lwp4YM}
Let $(\Aini_{i}, \Eini_{i})$ be an admissible $H^{1}$ initial data set, and define $\calIini := \nrm{\Aini}_{\dot{H}^{1}_{x}} + \nrm{\Eini}_{L^{2}_{x}}$. Consider the initial value problem (IVP) for \eqref{eq:hyperbolicYM} with $(\Aini_{i}, \Eini_{i})$ as the initial data.
\begin{enumerate}
\item There exists $T^{\star} = T^{\star}(\calIini)> 0$, which is non-increasing in $\calIini$, such that a unique admissible solution $A_{\mu} = A_{\mu} (t,x)$ to the IVP in the temporal gauge $A_{0} = 0$ exists on $(-T^{\star}, T^{\star})$. Furthermore, the following estimates hold.
\begin{equation*} 
	\nrm{\rd_{t,x} A}_{C_{t} ((-T^{\star}, T^{\star}),L^{2}_{x})} \leq C_{\calIini} \calIini, \quad 
	\nrm{A}_{C_{t} ((-T^{\star}, T^{\star}), L^{3}_{x})} \leq \nrm{\Aini}_{L^{3}_{x}} + (T^{\star})^{1/2} C_{\calIini} \calIini.
\end{equation*}
\item Let $(\Aini'_{i}, \Eini'_{i})$ be another admissible $H^{1}$ initial data set such that $\nrm{\Aini'}_{\dot{H}^{1}_{x}} + \nrm{\Eini'}_{L^{2}_{x}} \leq \calIini$, and let $A'_{\mu}$ be the corresponding solution given by (1). Then the following estimates for the difference hold.
\begin{align*} 
	\nrm{\rd_{t,x} A - \rd_{t,x} A'}_{C_{t} ((-T^{\star}, T^{\star}),L^{2}_{x})} \leq &
	C_{\calIini} (\nrm{\Aini - \Aini'}_{\dot{H}^{1}_{x}} + \nrm{\Eini - \Eini'}_{L^{2}_{x}} ). \\
	\nrm{A- A'}_{C_{t} ((-T^{\star}, T^{\star}), L^{3}_{x})} \leq &
	\nrm{\Aini - \Aini'}_{L^{3}_{x}} \\ &
	+ (T^{\star})^{1/2} C_{\calIini} (\nrm{\Aini - \Aini'}_{\dot{H}^{1}_{x}} +\nrm{\Eini - \Eini'}_{L^{2}_{x}} ).
\end{align*}

\item Finally, the following version of \emph{persistence of regularity} holds: if $\rd_{x} \Aini_{i}, \Eini_{i} \in H^{m}_{x}$ for an integer $m \geq 0$, then the corresponding solution given by (1) satisfies
\begin{equation*}
	\rd_{t,x} A_{i} \in C^{k_{1}}_{t} ((-T^{\star}, T^{\star}), H^{k_{2}}_{x})
\end{equation*}
for every pair $(k_{1}, k_{2})$ of nonnegative integers such that $k_{1} + k_{2} \leq m$.
\end{enumerate}
\end{theorem}

The Main Theorem of this paper is a global well-posedness statement, which (in essence) says that the solution given by Theorem \ref{thm:lwp4YM} can be extended globally in time. It uses crucially the fact that an admissible initial data set always possesses finite conserved energy, which was defined in \ref{eq:YMenergy}.

We are ready to state our Main Theorem.
\begin{MainTheorem}[Finite energy global well-posedness]
Let $(\Aini_{i}, \Eini_{i})$ be an admissible $H^{1}$ initial data set, and consider the initial value problem (IVP) for \eqref{eq:hyperbolicYM} with $(\Aini_{i}, \Eini_{i})$ as the initial data. Note that by admissibility, $(\Aini_{i}, \Eini_{i})$ always possesses finite conserved energy, i.e., $\bfE[\overline{\bfF}] < \infty$. Then the following statements hold.

\begin{enumerate}
\item The admissible solution given by Theorem \ref{thm:lwp4YM} extends globally in time, uniquely as an admissible solution in the temporal gauge $A_{0} = 0$. 

\item Moreover, if $\Aini_{i}$, $\Eini_{i}$ are smooth and $\rd_{x} \Aini_{i}, \Eini_{i} \in H^{m}_{x}$ for an integer $m \geq 0$, then the corresponding solution given by (1) is also smooth and satisfies
\begin{equation*}
	\rd_{t,x} A_{i} \in C^{k_{1}}_{t} (\bbR, H^{k_{2}}_{x})
\end{equation*}
for every pair $(k_{1}, k_{2})$ of nonnegative integers such that $k_{1} + k_{2} \leq m$.
\end{enumerate}
\end{MainTheorem}

We remark that quantitative estimates as in Parts (1), (2) of Theorem \ref{thm:lwp4YM} can be obtained by applying Theorem \ref{thm:lwp4YM} repeatedly. We have omitted these statements for the sake of brevity.

\begin{remark} 
The temporal gauge in the statements of Theorem \ref{thm:lwp4YM} and the Main Theorem does not play an essential role. We have used this mainly because it is a well-known gauge condition that is easy to impose. In fact, most of the analysis in this paper takes place under the caloric-temporal gauge condition which has been introduced above.
\end{remark}

\subsection{Outline of the paper}
After setting up the notations and conventions in Section \ref{sec:notations}, we begin the proof of the Main Theorem in Section \ref{sec:reduction} by reducing it to establishing Theorems \ref{thm:idEst}, \ref{thm:ctrlByE} and \ref{thm:dynEst}, all of which concern the system \eqref{eq:HPYM}. We remark that Theorems \ref{thm:idEst}, \ref{thm:ctrlByE} and \ref{thm:dynEst} will correspond to Steps 1, 2 and 3 which have been discussed in \S \ref{subsec:overview4GWP}, in the respective order.

The rest of the paper is devoted to proofs of Theorems \ref{thm:idEst}, \ref{thm:ctrlByE} and \ref{thm:dynEst}. In Section \ref{sec:prelim}, we gather some preliminary definitions and results needed in the remainder of the paper. In particular, we present techniques for dealing with covariant parabolic equations in \S \ref{subsec:covTech}. These techniques are put into use in the following section (Section \ref{sec:covParabolic}), where we study the covariant parabolic equations satisfied by the curvature 2-form $F$ of a solution to $F_{si} = \covD^{\ell} F_{\ell i}$  \eqref{eq:cYMHF} or $F_{s \mu} = \covD^{\ell} F_{s \mu}$ \eqref{eq:dYMHF}. As a result, we derive \emph{covariant parabolic estimates}, on which the whole paper is based. Then in Section \ref{sec:YMHF}, we study the systems \eqref{eq:cYMHF} and \eqref{eq:dYMHF} themselves under the caloric gauge condition $A_{s} = 0$\footnote{As a byproduct of our analysis, we obtain an independent proof of global existence of solutions to the original Yang-Mills heat flow $\rd_{s} A_{i} = \covD^{\ell} F_{\ell i}$ with finite Yang-Mills energy, which is a result originally due to R{\aa}de \cite{Rade:1992tu}. See Corollary \ref{cor:gwp4YMHF}.}. Then in the final three sections of this paper, we prove Theorems \ref{thm:idEst}, \ref{thm:ctrlByE} and \ref{thm:dynEst} in order.

For the convenience of the reader, this paper is complemented with two appendices. In Appendix \ref{appendix:dependence}, we provide a table that explains which results from the companion article \cite{Oh:6stz7nRe} are needed where in this paper. In Appendix \ref{appendix:LOS}, a list of symbols used in this paper is given. 

\subsection*{Acknowledgements}
The author is deeply indebted to his Ph.D. advisor Sergiu Klainerman, without whose support and constructive criticisms this work would not have been possible. Also, the author would like to thank Jonathan Luk for reading an earlier version of the paper and providing numerous helpful suggestions. He would also like to express gratitude to l'ENS d'Ulm for hospitality, where a major part of this work was done. He also thanks the anonymous referees for their useful suggestions for improving the readability of this paper. The author was supported by the Samsung Scholarship. 

\section{Notations and Conventions} \label{sec:notations}
In this paper, we will use boldfaced letters to refer to all space-time components; more precisely, $\bfF$ denotes any of the 6 components of $F_{\mu \nu}$, and $\bfA$, $\bfF_{s}$ denote any of the 4 components of $A_{\nu}$, $F_{s\nu}$, respectively. On the other hand, plain letters will refer to \emph{only} spatial components, i.e., $F = F_{ij}$, $A = A_{i}$, and $F_{s} = F_{si}$ for $i, j= 1,2,3$. A norm of such an expression, such as $\nrm{\bfA}$ or $\nrm{A}$, is to be understood as the maximum over the respective range of indices, i.e., $\nrm{\bfA} = \sup_{\mu=0,1,2,3} \nrm{A_{\mu}}$, $\nrm{A} = \sup_{i=1,2,3} \nrm{A_{i}}$ and etc.

We will use the notation $\calO(\phi_{1}, \ldots, \phi_{k})$ to denote a $k$-linear expression in the \emph{values} of $\phi_{1}, \ldots, \phi_{k}$. For example, when $\phi_{i}$ and the expression itself are scalar-valued, then $\calO(\phi_{1}, \ldots, \phi_{k}) = C \phi_{1} \phi_{2} \cdots \phi_{k}$ for some constant $C$. In many cases, however, each $\phi_{i}$ and the expression $\calO(\phi_{1}, \ldots, \phi_{k})$ will actually be matrix-valued. In such case, $\calO(\phi_{1}, \ldots, \phi_{k})$ will be a matrix, whose each entry is a $k$-linear functional of the matrices $\phi_{i}$. 

In stating various estimates, we will adopt the standard convention of denoting finite positive constants which are different, possibly line to line, by the same letter $C$. Dependence of $C$ on other parameters will be made explicit by subscripts. Furthermore, we will adopt the convention that $C$ \emph{always} depends in a non-decreasing manner with respect to each of its parameters, in its respective range, unless otherwise specified. For example, $C_{\bfE, \calIAlow}$, where $\bfE, \calIAlow$ range over positive real numbers, is a positive, non-decreasing function of both $\bfE$ and  $\calIAlow$.

Finally, in addition to plain greek and latin indices, we will utilize bold latin indices, such as $\bfa, \bfb$, which will refer to all possible indices $x^{0}, x^{1}, x^{2}, x^{3}, s$.

\section{Reduction of the Main Theorem} \label{sec:reduction}
\subsection{Preliminaries}
Before we begin, let us borrow a few definitions and lemmas from \cite{Oh:6stz7nRe}, which will be useful in the proof of the Main Theorem. We start with the notion of \emph{regular} functions and initial data sets.
\begin{definition}[Regular functions] \label{def:regFtns}
Let $I \subset \bbR$, $J \subset [0, \infty)$ be intervals.
\begin{enumerate}
\item A function $\phi = \phi(x)$ defined on $\bbR^{3}$ is \emph{regular} if $\phi \in H^{\infty}_{x} := \cap_{m=0}^{\infty} H^{m}_{x}$.
\item A function $\phi = \phi(t, x)$ defined on $I \times \bbR^{3}$ is \emph{regular} if $\phi \in C^{\infty}_{t}(I, H^{\infty}_{x}) := \cap_{k,m=0}^{\infty} C^{k}_{t}(I, H^{m}_{x})$.
\item A function $\psi = \psi(t, x,s)$ defined on $I \times \bbR^{3} \times J$ is \emph{regular} if $\phi \in C^{\infty}_{t,s}(I \times J, H^{\infty}_{x}):= \cap_{k,m=0}^{\infty} C^{k}_{t,s}(I \times J, H^{m}_{x})$.
\end{enumerate}
\end{definition}

\begin{definition}[Regular initial data sets] \label{def:reg4id}
We say that an initial data set $(\Aini_{i}, \Eini_{i})$ to \eqref{eq:hyperbolicYM} is \emph{regular} if, in addition to satisfying the constraint equation \eqref{eq:YMconstraint}, $\Aini_{i}, \Eini_{i} \in H^{\infty}_{x}$.
\end{definition}

The first lemma tells us that an admissible initial data set may be approximated by a sequence of regular initial data sets.
\begin{lemma}[Approximation lemma {\cite[Lemma 4.5]{Oh:6stz7nRe}}] \label{lem:regApprox}
Any admissible $H^{1}$ initial data set $(\Aini_{i}, \Eini_{i}) \in (\dot{H}^{1}_{x} \cap L^{3}_{x}) \times L^{2}_{x}$ can be approximated by a sequence of regular initial data sets $(\Aini_{(n) i}, \Eini_{(n) i})$ satisfying the constraint equation \eqref{eq:YMconstraint}. More precisely, the initial data sets $(\Aini_{(n) i}, \Eini_{(n) i})$ may be taken to satisfy the following properties.
\begin{enumerate}
\item $\Aini_{(n)}$ is smooth, compactly supported, and
\item $\Eini_{(n)} \in H^{\infty}_{x}$.
\end{enumerate}
\end{lemma}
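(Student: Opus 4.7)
The plan is to construct the approximating sequence in two stages: first, approximate $\Aini$ and $\Eini$ separately by mollification and cutoff without regard to the constraint equation, then restore the constraint with respect to the approximated $\Aini_{(n)}$ by modifying $\tilde{\Eini}_{(n)}$ via a covariant Hodge-type correction. In the first stage, given a standard mollifier $\rho_{\eps}$ and a smooth cutoff $\chi_{R}$ that equals $1$ on $\set{\abs{x} \leq R}$ and vanishes outside $\set{\abs{x} \leq 2R}$, I choose sequences $\eps_{n} \to 0$ and $R_{n} \to \infty$ and define $\Aini_{(n)} := \chi_{R_{n}} (\Aini \ast \rho_{\eps_{n}})$ and $\tilde{\Eini}_{(n)} := \Eini \ast \rho_{\eps_{n}}$. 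Since $\Aini \in \dot{H}^{1}_{x} \cap L^{3}_{x}$ and $\Eini \in L^{2}_{x}$, standard arguments yield $\Aini_{(n)} \to \Aini$ in $\dot{H}^{1}_{x} \cap L^{3}_{x}$ and $\tilde{\Eini}_{(n)} \to \Eini$ in $L^{2}_{x}$, with each $\Aini_{(n)}$ smooth and compactly supported and each $\tilde{\Eini}_{(n)}$ lying in every $H^{k}_{x}$.

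In the second stage, set $\covD_{(n)} := \rd + \LieBr{\Aini_{(n)}}{\cdot}$ and solve the covariant Poisson equation
\begin{equation*}
\covD_{(n)}^{\ell} \covD_{(n)\ell} \phi_{(n)} = \covD_{(n)}^{\ell} \tilde{\Eini}_{(n)\ell}
\end{equation*}
for $\phi_{(n)} \in \dot{H}^{1}_{x}$, then define $\Eini_{(n)} := \tilde{\Eini}_{(n)} - \covD_{(n)} \phi_{(n)}$. By construction, $\covD_{(n)}^{\ell} \Eini_{(n)\ell} = 0$, which is the constraint equation for $(\Aini_{(n)}, \Eini_{(n)})$. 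Existence and uniqueness of $\phi_{(n)}$ follow by applying the Lax-Milgram theorem to the symmetric (by bi-invariance of $(\cdot, \cdot)$) bilinear form $B(\phi, \psi) := \int_{\bbR^{3}} (\covD_{(n)\ell} \phi, \covD_{(n)}^{\ell} \psi) \, \ud x$ on $\dot{H}^{1}_{x}$, using the injectivity of $\covD_{(n)}$ on $\dot{H}^{1}_{x}$ (a covariantly constant section on the connected domain $\bbR^{3}$ that vanishes at infinity must be identically zero) together with Fredholm theory applied to the fact that $\covD_{(n)}^{\ell} \covD_{(n)\ell}$ differs from $\lap$ by lower-order terms with smooth compactly supported coefficients. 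Elliptic regularity then gives $\phi_{(n)} \in H^{k}_{x}$ for every $k$, and hence $\Eini_{(n)} \in H^{k}_{x}$ for every $k$.

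The main obstacle is to verify that $\Eini_{(n)} \to \Eini$ in $L^{2}_{x}$, which reduces to showing $\covD_{(n)} \phi_{(n)} \to 0$ in $L^{2}_{x}$. The key observation is that the original constraint $\rd^{\ell} \Eini_{\ell} + \LieBr{\Aini^{\ell}}{\Eini_{\ell}} = 0$ allows the source to be rearranged as
\begin{equation*}
\covD_{(n)}^{\ell} \tilde{\Eini}_{(n)\ell} = \rd^{\ell}(\tilde{\Eini}_{(n)\ell} - \Eini_{\ell}) + \LieBr{\Aini_{(n)}^{\ell} - \Aini^{\ell}}{\tilde{\Eini}_{(n)\ell}} + \LieBr{\Aini^{\ell}}{\tilde{\Eini}_{(n)\ell} - \Eini_{\ell}},
\end{equation*}
each of whose terms tends to zero in $\dot{H}^{-1}_{x}$: the first by $L^{2}_{x}$-convergence of $\tilde{\Eini}_{(n)}$, and the second and third by H\"older's inequality combined with the embedding $L^{6/5}_{x} \hookrightarrow \dot{H}^{-1}_{x}$ (with factors in $L^{3}_{x}$ and $L^{2}_{x}$). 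The final ingredient is a uniform-in-$n$ bound $\nrm{\phi_{(n)}}_{\dot{H}^{1}_{x}} \lesssim \nrm{\covD_{(n)}^{\ell} \tilde{\Eini}_{(n)\ell}}_{\dot{H}^{-1}_{x}}$, which I expect to be the most delicate point; I would obtain it from continuous dependence of the operator $\covD_{(n)}^{\ell} \covD_{(n)\ell} : \dot{H}^{1}_{x} \to \dot{H}^{-1}_{x}$ on $\Aini_{(n)}$ (in an $L^{3}_{x}$-type norm, using Sobolev embeddings), together with the fact that the limiting operator $\covD_{\Aini}^{\ell} \covD_{\Aini \ell}$ remains invertible by the same Fredholm-plus-injectivity argument, so that the family $(\covD_{(n)}^{\ell} \covD_{(n)\ell})^{-1}$ is uniformly bounded for $n$ large enough.
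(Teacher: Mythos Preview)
The paper does not supply its own proof of this lemma; it is quoted verbatim as \cite[Lemma 4.5]{Oh:6stz7nRe} from the companion article, so there is nothing in the present paper to compare your argument against.

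On its own merits, your two-stage strategy (mollify and cut off, then restore the constraint via a covariant Hodge projection) is correct and standard. Two small comments. First, the invocation of Lax--Milgram is not quite right: the bilinear form $B(\phi,\phi) = \int (\covD_{(n)\ell}\phi, \covD_{(n)}^{\ell}\phi)$ is not obviously coercive on $\dot H^{1}_{x}$ without further work, so what you really use is the Fredholm alternative directly---the operator $\covD_{(n)}^{\ell}\covD_{(n)\ell}$ is a compact perturbation of $\lap : \dot H^{1}_{x} \to \dot H^{-1}_{x}$ (compactness following from the compact support of $\Aini_{(n)}$), has index zero, and is injective by your covariant-constancy argument combined with bi-invariance (which gives $(\phi,[\Aini_{(n)},\phi])=0$, hence $\abs{\phi}$ constant). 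Second, for the uniform bound on the inverses, the cleanest route is to note that $\covD_{(n)}^{\ell}\covD_{(n)\ell} \to \covD^{\ell}\covD_{\ell}$ in operator norm $\mathcal{L}(\dot H^{1}_{x}, \dot H^{-1}_{x})$ (since $\Aini_{(n)} \to \Aini$ in $L^{3}_{x}$ and the perturbation terms are controlled by H\"older and Sobolev), that the limiting operator is again a compact perturbation of $\lap$ (multiplication by an $L^{3}_{x}$ function being a norm-limit of multiplication by compactly supported functions), and that invertibility is open in operator norm. This replaces the somewhat vague ``continuous dependence plus Fredholm-plus-injectivity'' phrasing with a direct argument.
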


The idea is that a regular initial data set leads to, by persistence of regularity, a \emph{regular solution} to \eqref{eq:hyperbolicYM} and \eqref{eq:HPYM}, which is defined as follows.
\begin{definition}[Regular solutions]  \label{def:mainThm:reg4YM}
We say that a representative $A_{\mu} : I \times \bbR^{3} \to \LieAlg$ of a classical solution to \eqref{eq:hyperbolicYM} is \emph{regular} if $A_{\mu} \in C^{\infty}_{t}(I, H^{\infty}_{x})$. Furthermore, we say that a smooth solution $A_{\bfa} : I \times \bbR^{3} \times J \to \LieAlg$ to \eqref{eq:HPYM} is \emph{regular} if $A_{\bfa} \in C^{\infty}_{t,s}(I \times J, H^{\infty}_{x})$.
\end{definition}

Related to the notion of a regular solution, we also introduce the definition of a \emph{regular gauge transform}.
\begin{definition}[Regular gauge transform] \label{def:reg4gt}
We say that a gauge transform $U$ on $I \times \bbR^{3} \times J$ is a \emph{regular gauge transform} if $U - \mathrm{Id}, \, U^{-1} - \mathrm{Id} \in C^{\infty}_{t,s}(I \times J, H^{\infty}_{x})$.
A gauge transform $U$ defined on $I \times \bbR^{3}$ is a \emph{regular gauge transform} if it is a regular gauge transform viewed as an $s$-independent gauge transform on $I \times \bbR^{3} \times J$ for some $J \subset [0, \infty)$.
\end{definition}

Note that a regular solution (either to \eqref{eq:hyperbolicYM} or to \eqref{eq:HPYM}) remains regular under a regular gauge transform.

Let us also define a norm $\calA_{0}[A_{0}](I)$ (for $I \subset \bbR$ an interval)  for a $\LieAlg$-valued function $A_{0}$ on $I \times \bbR^{3}$ as follows:
\begin{equation} \label{eq:calA0}
\begin{aligned}
	\calA_{0}[A_{0}](I) := & \nrm{A_{0}}_{L^{\infty}_{t} L^{3}_{x}(I)} + \nrm{\rd_{x} A_{0}}_{L^{\infty}_{t} L^{2}_{x}(I)} \\
					& + \nrm{A_{0}}_{L^{1}_{t} L^{\infty}_{x}(I)} + \nrm{\rd_{x} A_{0}}_{L^{1}_{t} L^{3}_{x}(I)} + \nrm{\rd_{x}^{(2)} A_{0}}_{L^{1}_{t} L^{2}_{x}(I)}.
\end{aligned}
\end{equation}

The next lemma shows that this norm is exactly what one needs in order to estimate gauge transforms into the temporal gauge.

\begin{lemma}[Estimates for gauge transform to temporal gauge {\cite[Lemma 4.6]{Oh:6stz7nRe}}] \label{lem:est4gt2temporal}
Consider the following ODE on $(-T, T) \times \bbR^{3}$:
\begin{equation} \label{eq:est4gt2temporal:0}
\left \{
\begin{aligned}
	& \rd_{t} V = V A_{0} \\
	& V(t=0) = \Vini,
\end{aligned}
\right.
\end{equation}
where we assume that $A_{0}$ is smooth and $\calA_{0}(-T, T) < \infty$.

\begin{enumerate}
\item Suppose that $\Vini = \Vini(x)$ is a smooth $\LieGrp$-valued function on $\set{t=0} \times \bbR^{3}$ such that 
\begin{equation*}
	\Vini, \Vini^{-1} \in L^{\infty}_{x}, \quad \rd_{x} \Vini, \rd_{x} \Vini^{-1} \in L^{3}_{x}, \quad \rd_{x}^{(2)} \Vini, \rd_{x}^{(2)} \Vini^{-1} \in L^{2}_{x} .
\end{equation*} 

Then there exists a unique solution $V$ to the ODE, which obeys the following estimates.\begin{equation} \label{eq:est4gt2temporal:V}
\left\{
\begin{aligned}
	\nrm{V}_{L^{\infty}_{t} L^{\infty}_{x} (-T, T)} 
	\leq & C_{\calA_{0}(-T, T)} \cdot \nrm{\Vini}_{L^{\infty}_{x}}, \\
	\nrm{\rd_{t,x} V}_{L^{\infty}_{t} L^{3}_{x} (-T, T)} 
	\leq & C_{\calA_{0}(-T, T)} \cdot (\nrm{\rd_{x} \Vini}_{L^{3}_{x}} +\calA_{0}(-T, T) \nrm{\Vini}_{L^{\infty}_{x}}), \\
	\nrm{\rd_{x} \rd_{t,x} V}_{L^{\infty}_{t} L^{2}_{x} (-T, T)} 
	\leq & C_{\calA_{0}(-T, T)} \cdot (\nrm{\rd_{x}^{(2)} \Vini}_{L^{2}_{x}} +\calA_{0}(-T, T) (\nrm{\Vini}_{L^{\infty}_{x}} + \nrm{\rd_{x} \Vini}_{L^{3}_{x}}) ). 
\end{aligned}
\right.
\end{equation}

These estimates remain true with $V$, $\Vini$ replaced by $V^{-1}$, $\Vini^{-1}$, respectively.
\end{enumerate}

\end{lemma}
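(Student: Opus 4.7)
The equation $\partial_t V = V A_0$ is an ODE in $t$ with $x$ as a parameter, so the plan is to treat it pointwise in $x$ and then take spatial norms at the end, using Grönwall's inequality at each stage to close. The existence, uniqueness, smoothness and $\LieGrp$-valuedness of $V$ on $(-T_0, T_0) \times \bbR^3$ are handled first by standard ODE theory: since $A_0$ is smooth and $\nrm{A_0}_{L^1_t L^\infty_x} \leq \calA_0 < \infty$, Picard iteration produces a unique smooth $V$ globally on $(-T_0, T_0) \times \bbR^3$, and the fact that $A_0$ is $\LieAlg$-valued keeps $V$ inside $\LieGrp$.

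Next I would derive the three $L^\infty_t$ estimates in a cascade, each using only the previous one together with one of the norms making up $\calA_0$. For the $L^\infty_x$ estimate, integrate $|V(t,x)| \leq |\Vini(x)| + \int_0^t |V(t',x)| \, |A_0(t',x)| \, \ud t'$ and apply Grönwall in $t$ pointwise in $x$, producing the factor $\exp(\nrm{A_0}_{L^1_t L^\infty_x})$ and hence the first bound in \eqref{eq:est4gt2temporal:V}. For the $L^\infty_t L^3_x$ estimate on $\partial_x V$, differentiate the ODE to obtain $\partial_t (\partial_x V) = \partial_x V \cdot A_0 + V \cdot \partial_x A_0$, then take $L^3_x$ norms using Hölder (with $A_0$ placed in $L^\infty_x$ and $\partial_x A_0$ in $L^3_x$), and apply Grönwall in $t$ using $\nrm{A_0}_{L^1_t L^\infty_x}$ and $\nrm{\partial_x A_0}_{L^1_t L^3_x}$ together with the already-proved $L^\infty_x$ bound on $V$. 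The bound on $\partial_t V$ in $L^\infty_t L^3_x$ then comes for free from the ODE itself: $\nrm{\partial_t V}_{L^\infty_t L^3_x} \leq \nrm{V}_{L^\infty_t L^\infty_x} \nrm{A_0}_{L^\infty_t L^3_x}$, giving the $\calA_0 \cdot \nrm{\Vini}_{L^\infty_x}$ contribution.

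For the $L^\infty_t L^2_x$ estimate on $\partial_x^{(2)} V$, differentiate once more: $\partial_t (\partial_x^{(2)} V) = \partial_x^{(2)} V \cdot A_0 + 2 \partial_x V \cdot \partial_x A_0 + V \cdot \partial_x^{(2)} A_0$. Take $L^2_x$ norms; the dangerous middle term is handled by Hölder $L^3_x \cdot L^6_x$ followed by the Sobolev embedding $\nrm{\partial_x A_0}_{L^6_x} \lesssim \nrm{\partial_x^{(2)} A_0}_{L^2_x}$, which is exactly why $\nrm{\partial_x^{(2)} A_0}_{L^1_t L^2_x}$ appears as a component of $\calA_0$. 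Combining with the previous step's bound on $\nrm{\partial_x V}_{L^\infty_t L^3_x}$, a final Grönwall in $t$ using $\nrm{A_0}_{L^1_t L^\infty_x}$ closes the third estimate in \eqref{eq:est4gt2temporal:V}. The likely obstacle here is purely bookkeeping — picking the right Hölder/Sobolev split at each step so that the bilinear products land in the specific norms contained in $\calA_0$ — but no deep idea beyond Grönwall is needed.

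Finally, for $V^{-1}$, differentiate $V V^{-1} = \mathrm{Id}$ to obtain $\partial_t V^{-1} = -A_0 V^{-1}$ with initial datum $\Vini^{-1}$. This is the same ODE as for $V$ up to replacing right-multiplication by left-multiplication and $A_0 \to -A_0$, so the identical cascade of Grönwall arguments applies verbatim, yielding the corresponding estimates for $V^{-1}$. Uniqueness of $V^{-1}$ as the inverse of $V$ also follows.
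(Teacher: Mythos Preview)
The paper does not actually prove this lemma here; it is quoted verbatim from the companion paper \cite[Lemma 4.6]{Oh:6stz7nRe}, so there is no in-paper proof to compare against. That said, your proposal is correct and is exactly the standard argument one would expect for such a result: a cascade of Gr\"onwall inequalities, differentiating the ODE to successively higher order and using H\"older to place the coefficients in precisely the norms that constitute $\calA_{0}$. Your choice of $L^{3}_{x} \cdot L^{6}_{x}$ for the cross term $\rd_{x} V \cdot \rd_{x} A_{0}$, followed by Sobolev $\dot{H}^{1}_{x} \hookrightarrow L^{6}_{x}$, is the natural way to make the $\nrm{\rd_{x}^{(2)} A_{0}}_{L^{1}_{t} L^{2}_{x}}$ component of $\calA_{0}$ appear, and the treatment of $V^{-1}$ via $\rd_{t} V^{-1} = -A_{0} V^{-1}$ is also standard. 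There is no gap.
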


\subsection{Reduction of the Main Theorem to Theorems \ref{thm:idEst}, \ref{thm:ctrlByE} and \ref{thm:dynEst}} \label{subsec:reduction}
For the convenience of the reader, we begin by recalling the definition of the system \eqref{eq:HPYM}:
\begin{equation*} \tag{HPYM}
\left\{
\begin{aligned}
	F_{s \mu} &= \covD^{\ell} F_{\ell \mu} \hspace{.25in} \hbox{ on } \hspace{.1in} I \times \bbR^{3} \times [0,1], \\
	\covD^{\mu} F_{\mu \nu} &= 0 \hspace{.5in} \hbox{ along } I \times \bbR^{3} \times \set{0}.
\end{aligned}
\right.
\end{equation*}
Recall also that the first line, i.e., $F_{s \mu} = \covD^{\ell} F_{\ell \mu}$, is the dynamic Yang-Mills heat flow \eqref{eq:dYMHF}.

The first theorem we present below concerns two points: A) After scaling, it ensures that \eqref{eq:dYMHF} can always be solved (from which we obtain a solution to \eqref{eq:HPYM}) on the unit $s$-interval provided that the conserved energy is small, and B) It gives a quantitative estimate for the data for \eqref{eq:HPYM} at $t=0$ (namely $\calI(0)$) in terms of the size $\calIini$ of the initial data set $(\Aini_{i}, \Eini_{i})$.

Given a solution $A_{\bfa}$ to \eqref{eq:HPYM} on $I \times \bbR^{3} \times [0,1]$, we assert the existence of a norm $\calI(t)$ $(t \in I)$ of $\rd_{t,x} F_{si}(t,s)$ $(0 < s < 1)$ and $\rd_{t,x}\Alow_{i}(t)$, for which the following theorem holds. The precise definition of $\calI(t)$ will be given in Section \ref{sec:pfOfIdEst}.

\begin{bigTheorem}[Transformation into the caloric-temporal gauge and estimates at $t=0$]  \label{thm:idEst}
Consider a regular initial data set $(\Aini_{i}, \Eini_{i})$ to \eqref{eq:hyperbolicYM} which satisfies\footnote{We remind the reader the notation $\overline{\bfF} = F_{\mu \nu}(t=0, s=0)$.}
\begin{equation} \label{eq:idEst:hyp}
	\nrm{\Aini}_{\dot{H}^{1}} < \dlt_{P}, \quad \bfE[\overline{\bfF}] < \dlt.
\end{equation}
where $\dlt_{P}, \dlt > 0$ are small absolute constants. Let $\Atemp_{i}$ be the corresponding regular solution to \eqref{eq:hyperbolicYM} in the temporal gauge given by Theorem \ref{thm:lwp4YM}, which we assume to exist on $(-T_{0}, T_{0}) \times \bbR^{3}$ for some $T_{0} > 0$. Then:

\begin{enumerate}
\item There exists a regular gauge transform $V = V(t,x)$ on $(-T_{0}, T_{0}) \times \bbR^{3}$ and a regular solution $A_{\bfa} = A_{\bfa}(t,x,s)$ to \eqref{eq:HPYM} in the caloric-temporal gauge on $(-T_{0}, T_{0}) \times \bbR^{3} \times [0,1]$ such that $F_{\mu \nu}$ is regular and
\begin{equation} \label{eq:idEst:0}
A_{\mu}(s=0) = V \Atemp_{\mu} V^{-1} - \rd_{\mu} V V^{-1}.
\end{equation}

\item With the notation $\calIini = \nrm{\Aini}_{\dot{H}^{1}_{x}} + \nrm{\Eini}_{L^{2}_{x}}$, the following estimates hold.
\begin{equation} \label{eq:idEst:1}
	\calI(0) \leq C_{\calIini} \cdot \calIini, \quad 
	\nrm{\Vini}_{L^{\infty}_{x}} \leq C_{\calIini}, \quad 
	\nrm{\rd_{x} \Vini}_{L^{3}_{x}} + \nrm{\rd_{x}^{(2)} \Vini}_{L^{2}_{x}} \leq C_{\calIini} \cdot \calIini.
\end{equation}
These estimates remain true with $\Vini$ replaced by $\Vini^{-1}$ as well.

\end{enumerate}
\end{bigTheorem}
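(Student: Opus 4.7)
The plan is to split the argument into three stages, closely following Step 1 in the overview of \S\ref{subsec:overview4GWP}: (a) extending $\Atemp_{\mu}$ via the dynamic Yang-Mills heat flow to a solution of \eqref{eq:HPYM} on the full cylinder $(-T_{0}, T_{0}) \times \bbR^{3} \times [0,1]$; (b) imposing the caloric-temporal gauge via a regular gauge transform; and (c) establishing the fixed-time estimates at $t=0$.

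For (a), I would start with $\Atemp_{\mu}$ and solve \eqref{eq:dYMHF} in the DeTurck gauge $A_{s} = \rd^{\ell} A_{\ell}$, which renders the system \emph{strictly} parabolic at the level of $A_{\mu}$, via a local-in-$s$ Picard iteration. The key point is to obtain a \emph{uniform} $s$-interval of existence $[0, s_{0}]$ valid across \emph{all} $t \in (-T_{0}, T_{0})$, and then to rescale so that $s_{0} = 1$. The length of the Picard interval depends only on suitable norms of $(A_{i}, \rd_{t} A_{i})(t)$ at $s=0$, which in the temporal gauge are controlled at every $t$ by the conserved energy $\bfE[\overline{\bfF}]$ (using $F_{0i} = \rd_{t} A_{i}$, the curvature formula for $F_{ij}$, and Sobolev-type embeddings, together with $\nrm{\Aini}_{\dot{H}^{1}_{x}} < \dlt_{P}$ to control the quadratic term). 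The smallness assumption \eqref{eq:idEst:hyp} then guarantees a uniform Picard time; after rescaling we may take it to be $s_{0} = 1$. Persistence of regularity for the parabolic system preserves smoothness of $A_{\bfa}$ and $F_{\mu \nu}$ throughout.

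For (b), solve the ODE $U^{-1} \rd_{s} U = -A_{s}$ in $s$ to pass from the DeTurck gauge to the caloric gauge $A_{s} = 0$; this is an ODE whose regularity is controlled by that of $A_{s}$. To additionally impose $\Alow_{0} = 0$ along $\set{s=1}$, compose with a second, $s$-independent gauge transform obtained by solving a transport ODE in $t$ (as in Lemma \ref{lem:est4gt2temporal}). The composite gauge transform, restricted to $s=0$, is the $V$ satisfying \eqref{eq:idEst:0}. Since it is built from regular ODE solutions driven by regular inputs, it is a regular gauge transform in the sense of Definition \ref{def:reg4gt}. For (c), I would then derive the estimate $\calI(0) \le C_{\calIini} \cdot \calIini$ by working at $t=0$: the DeTurck gauge, being strictly parabolic, admits covariant parabolic estimates (to be developed in Section \ref{sec:covParabolic}) that control $\rd_{t,x} F_{si}$ with the weights of $s$ dictated by the scaling \eqref{eq:YMscaling} on the unit $s$-interval, with initial data of size $\calIini$. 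One then transfers these bounds to the caloric-temporal gauge by composing with the gauge transform from (b) restricted to $t=0$; with the normalization described in the footnote of \S\ref{subsec:intro:lwp} (i.e.\ setting $U = \mathrm{Id}$ not at $s=0$ but at a suitable reference $s$), this transform is bounded in the requisite norms and does not spoil the parabolic smoothing of $\Alow_{i}(t=0)$. The pointwise-in-$s$, $L^{2}_{s}$-in-$s$, and fixed-$s_{0}$ pieces entering $\calI(0)$ are then all covered. The bounds on $\Vini$ and $\Vini^{-1}$ follow by integrating the defining ODE $\rd_{s} U = -U A_{s}$ in $s$ at $t=0$ and using the same DeTurck-gauge parabolic control on $A_{s} = \rd^{\ell} A_{\ell}$.

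I expect the main obstacle to be the coupling in (c) between the smoothing estimates (which are cleanly available only in the DeTurck gauge) and the caloric-temporal-gauge norms that enter $\calI(0)$ and the bounds on $\Vini$, $\Vini^{-1}$. The DeTurck-to-caloric gauge transform must be normalized so that it is bounded in precisely the norms required while preserving the smoothing gains on $\Alow_{i}(t=0)$; this replaces, and plays the role of, Uhlenbeck's lemma in the present framework, and is the step that requires the most care. A secondary difficulty is ensuring that the uniform $s$-interval produced in (a) really depends only on $\bfE[\overline{\bfF}]$ and not on the possibly large $\nrm{\rd_{t,x} \Atemp_{\mu}(t)}_{L^{2}_{x}}$, which is where the conserved energy plays its essential role.
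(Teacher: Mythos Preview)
Your overall architecture (extend by the heat flow, gauge-transform to caloric-temporal, read off estimates at $t=0$) matches the paper, but there is a genuine gap in step (a). You assert that the Picard interval for \eqref{eq:dYMHF} in the DeTurck gauge depends only on norms ``controlled at every $t$ by the conserved energy $\bfE[\overline{\bfF}]$''. This is exactly Issue~C of \S\ref{subsec:intro:motivation}: in the temporal gauge the conserved energy controls $\nrm{F_{\mu\nu}(t)}_{L^{2}_{x}}$, but \emph{not} $\nrm{\rd_{x} \Atemp_{i}(t)}_{L^{2}_{x}}$ (only the curl of $A_{i}$ is seen). A strictly parabolic Picard scheme in the DeTurck gauge needs $\nrm{A(t)}_{\dot{H}^{1}_{x}}$ (already to make sense of $A_{s}=\rd^{\ell}A_{\ell}$ and the quadratic terms), so the interval you produce will in general shrink as $t\to T_{0}$; you cannot rescale to $[0,1]$ uniformly this way. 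The hypothesis $\nrm{\Aini}_{\dot{H}^{1}_{x}}<\dlt_{P}$ is only at $t=0$ and does not propagate.

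The paper circumvents this not by a cleverer Picard bound but by a two-stage argument (Theorems~\ref{thm:implwp4YMHF} and \ref{thm:impLWP4dYMHF}): for each fixed $t$ one first gets local-in-$s$ existence on some $[0,s^{\star}(t)]$ (with $s^{\star}$ depending on the possibly large $\nrm{\Atemp(t)}_{\dot{H}^{1}_{x}}$), and then uses the \emph{gauge-invariant} covariant parabolic equations \eqref{eq:covParabolic4F}--\eqref{eq:BWeq:2} for $F_{\mu\nu}$ together with Proposition~\ref{prop:pEst4covF}, whose constants depend only on $\bfE(t)<\dlt$, to obtain a~priori bounds on $\covD_{x}^{(k)}F$ that rule out blow-up of $\nrm{A(t,s)}_{\dot{H}^{1}_{x}}$ before $s=1$. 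This is the mechanism by which the energy alone yields a uniform $s$-interval; it is not a Picard-time estimate. For step~(c), rather than redoing the DeTurck-gauge analysis, the paper invokes the companion paper's result (restated here as Theorem~\ref{thm:oldIdEst}) on a \emph{short} time interval $(-\eps_{0},\eps_{0})$ where $\nrm{\Atemp(t)}_{\dot{H}^{1}_{x}}<\dlt_{P}$ does hold, and then uses the uniqueness Lemma~\ref{lem:uni4dYMHF} to identify the two constructions; this is how $\Vini$ is chosen and how the estimates \eqref{eq:idEst:1} are obtained without reworking the normalization issue you flag as the ``main obstacle''.
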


The non-trivial initial gauge transform $\Vini$ has been introduced to ensure that $\Alow_{i}$ is smoother than $\Atemp_{i}$. It can be thought of as a substitute for the Uhlenbeck's lemma in our setting; for more details, we refer the reader to \cite[Proof of Theorem 4.8]{Oh:6stz7nRe}. The quantitative estimates \eqref{eq:idEst:1}, as well as the existence of $\Vini$, follow from \cite[Theorem 4.8]{Oh:6stz7nRe}.

The next theorem says that the conserved energy $\bfE(t)$ can be used to control $\calI(t)$ for every $t \in (-T_{0}, T_{0})$; we refer the reader to Step 2 in \S \ref{subsec:overview4GWP} for the idea behind the theorem.

\begin{bigTheorem} [Fixed time estimates by $\bfE$] \label{thm:ctrlByE}
Let $T_{0} > 0$ and consider a regular solution $A_{\bfa}$ to \eqref{eq:HPYM} in the caloric-temporal gauge on $(-T_{0}, T_{0}) \times \bbR^{3} \times [0,1]$ satisfying $\calI(0) < \infty$ and $\bfE[\overline{\bfF}] < \infty$.

 Then for $t \in (-T_{0}, T_{0})$, $\calI(t)$ can be bounded in terms of the initial data and $T_{0}$, i.e.,
 \begin{equation} \label{eq:ctryByE:0}
\sup_{t \in (-T_{0}, T_{0})} \calI(t) \leq C_{\calI(0), \bfE[\overline{\bfF}], T_{0}} < \infty.
\end{equation}
\end{bigTheorem}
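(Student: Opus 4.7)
The plan is to combine two ingredients: (i) the gauge-invariant conservation of the Yang-Mills energy, namely $\bfE[F_{\mu\nu}(t,s=0)] = \bfE[\overline{\bfF}]$ for all $t \in (-T_{0}, T_{0})$, which holds because $A_{\mu}(s=0)$ solves \eqref{eq:hyperbolicYM} and energy is gauge-invariant; and (ii) the covariant parabolic smoothing along the heat flow $s \mapsto A_{i}(t,s)$ to be established in Section~\ref{sec:covParabolic}. The first ingredient provides a uniform-in-$t$ bound on the base datum of the heat flow at each time slice, and the second promotes it to high-regularity estimates at every $s \in (0,1]$ depending only on $\bfE[\overline{\bfF}]$.

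\emph{Step 1 (Bounding $F_{si}$-type quantities).} Fix $t \in (-T_{0}, T_{0})$. Applying the covariant parabolic estimates of Section~\ref{sec:covParabolic} to $F_{\mu\nu}(t,\cdot)$, whose initial datum at $s=0$ has energy $\bfE[\overline{\bfF}]$, I would obtain weighted bounds of the form
\[
s^{k/2} \nrm{\covD_{x}^{(k)} F_{\mu\nu}(t,s)}_{L^{2}_{x}} \leq C_{\bfE[\overline{\bfF}]},
\]
together with analogous $L^{2}_{s}$-type bounds in the spirit of \eqref{eq:intro:est4HPYM}. Since $F_{si}(t,s) = \covD^{\ell} F_{\ell i}(t,s)$ already carries one spatial covariant derivative, every $F_{si}$-type term entering $\calI(t)$ (including time derivatives, via \eqref{eq:hyperbolic4F} and $\covD_{s} F_{\mu\nu}$ identities) is controlled by $\bfE[\overline{\bfF}]$ alone, uniformly in $t$.

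\emph{Step 2 (Bounding $\Alow$-type quantities).} The temporal gauge condition $\Alow_{0} = 0$ at $s=1$ yields $\Flow_{0i} = \rd_{t} \Alow_{i}$, so integrating in $t$ from $t=0$ gives
\[
\Alow_{i}(t) = \Alow_{i}(0) + \int_{0}^{t} \Flow_{0i}(t') \, \ud t'.
\]
Differentiating in $x$ and invoking the Step~1 estimates at $s=1$ produces
\[
\nrm{\rd_{x}^{(k)} \Alow_{i}(t)}_{L^{2}_{x}} \leq \nrm{\rd_{x}^{(k)} \Alow_{i}(0)}_{L^{2}_{x}} + \abs{t} \cdot C_{\bfE[\overline{\bfF}]} \leq \calI(0) + T_{0} \cdot C_{\bfE[\overline{\bfF}]},
\]
which is precisely the source of the $T_{0}$-dependence on the right-hand side of \eqref{eq:ctryByE:0}. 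The remaining time-derivative portions of $\calI(t)$ at $s=1$ are directly $\Flow_{0i}$ and $\rd_{t}\Flow_{0i}$, already handled by Step~1.

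The main obstacle will be the conversion in Step~2 between covariant derivatives $\covD_{x}^{(k)} F$ (the natural output of the parabolic analysis) and ordinary derivatives $\rd_{x}^{(k)} F$ (needed to bound $\rd_{x}^{(k)} \Alow$). Recursively expanding $\covD = \rd + \LieBr{A}{\cdot}$ gives schematically
\[
\rd_{x}^{(k)} F = \covD_{x}^{(k)} F + \sum \calO(\rd_{x}^{(j_{1})} A, \ldots, \rd_{x}^{(j_{p})} A, \rd_{x}^{(j_{0})} F),
\]
so one must simultaneously control $A_{i}(t,s)$ at intermediate $s$-scales. In the caloric gauge $A_{s} = 0$, the identity $\rd_{s} A_{i} = F_{si}$ yields $A_{i}(t,s) = A_{i}(t,0) + \int_{0}^{s} F_{si}(t,s') \, \ud s'$, and the Step~1 bounds (together with Sobolev embedding and the integrability of the $s$-weights) supply the needed $L^{p}_{x}$ norms of $A_{i}(t,s)$ in terms of $\bfE[\overline{\bfF}]$ and $\calI(0)$ at the base. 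Closing the argument then amounts to a bootstrap in $k$: at each stage one controls $\covD_{x}^{(k)} F$ parabolically, then $A$ at order $k$ via the $s$-integration, then $\rd_{x}^{(k)} F$ via the commutator expansion, and feeds the result back into the $\Alow$ integral identity. This is routine book-keeping once Section~\ref{sec:covParabolic} is in hand, but it is the one place where the gauge-invariant parabolic output must be combined with gauge-dependent Sobolev-type estimates.
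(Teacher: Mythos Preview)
Your overall architecture is right: covariant parabolic smoothing controls $\covD_{x}^{(k)} \bfF(t,s)$ uniformly in $t$ by $\bfE[\overline{\bfF}]$, and the temporal gauge identity $\rd_{t}\Alow_{i} = \Flow_{0i}$ lets you integrate in $t$ to control $\Alow$. The paper proceeds exactly along these two lines. However, your handling of the covariant-to-ordinary conversion contains a genuine gap.

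You propose to control $A_{i}(t,s)$ by integrating $\rd_{s} A_{i} = F_{si}$ \emph{from $s=0$}, writing $A_{i}(t,s) = A_{i}(t,0) + \int_{0}^{s} F_{si}\,\ud s'$. But $A_{i}(t,0)$ is precisely the Yang-Mills solution at heat-time zero (up to the gauge transform $V$), and for $t \neq 0$ you have no control on it whatsoever: it is the very quantity whose potential blow-up as $t \to T_{0}$ you are trying to exclude. The norm $\calI(0)$ is data at \emph{time} $t=0$, not at \emph{heat-time} $s=0$ for general $t$; the phrase ``$\calI(0)$ at the base'' conflates the two. So this integration formula does not close.

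The paper's remedy is to reverse the direction of the $s$-integration and reorder the steps. First one controls $\calIAlow(t)$ purely at $s=1$, by induction on the number of derivatives $k$: in the commutator expansion $\rd_{x}^{(k)}\Flow_{0i} = \covD_{x}^{(k)}\Flow_{0i} + \sum \calO(\rd_{x}^{(\ell_{1})}\Alow,\ldots,\rd_{x}^{(\ell_{j})}\Alow,\covD_{x}^{(\ell)}\Flow_{0i})$, every $\ell_{i} \leq k-1$, so the lower-order $\Alow$ factors are supplied by the induction hypothesis (this yields polynomial growth in $t$, hence the $T_{0}$ dependence). Only \emph{after} $\calIAlow(t)$ is controlled does one integrate $\rd_{s} A_{\mu} = F_{s\mu}$ \emph{from $s=1$ back toward $s=0$}, using $\Alow(t)$ as the initial term, to obtain $L^{\infty}_{x}$ control of $\bfA(t,s)$ for all $s \in (0,1]$ (this is Proposition~\ref{prop:intEst4A:s}). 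That in turn powers the general conversion (Corollary~\ref{cor:cov2u}) used to pass from \eqref{eq:pEst4covFs:1} to the ordinary-derivative norm $\calIFs(t)$. In short: $\Alow$ must be handled first, via an induction in $k$ at $s=1$; only then can one propagate down in $s$, starting from $s=1$ rather than $s=0$.
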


From Theorems \ref{thm:idEst} and \ref{thm:ctrlByE}, we obtain {\it a priori} estimates on each fixed-time slice $\set{t} \times \bbR^{3} \times [0,1]$. In order to estimate the gauge transform back to the temporal gauge, however, one needs to control $\calA_{0}$ (recall Lemma \ref{lem:est4gt2temporal}), and for this purpose it turns out that these fixed-time estimates are insufficient. In order to estimate $\calA_{0}$ we need to take advantage of the fact that the dynamic variables $F_{s}, \Alow$ satisfy wave equations, which is exactly what the next theorem achieves. 

\begin{bigTheorem}[Short time estimates for \eqref{eq:HPYM} in the caloric-temporal gauge] \label{thm:dynEst}
Let $T_{0} > 0$ and consider a regular solution $A_{\bfa}$ to \eqref{eq:HPYM} in the caloric-temporal gauge on $(-T_{0}, T_{0}) \times \bbR^{3} \times [0,1]$ such that $\bfF = F_{\mu \nu}$ is regular on $(-T_{0}, T_{0}) \times \bbR^{3} \times [0,1]$. Suppose furthermore that 
\begin{equation} \label{eq:dynEst:hyp}
	\sup_{t \in (-T_{0}, T_{0})} \bfE[\bfF(t, s=0)] < \dlt, \quad \sup_{t \in (-T_{0}, T_{0})} \calI(t) \leq D,
\end{equation}
where $D > 0$ is an arbitrarily large finite number and $\dlt > 0$ is an absolute small constant independent of $D$.

Then there exists a number $d = d(D, \dlt)$, which depends on $D, \dlt$ in a non-increasing fashion, such that on every subinterval $I_{0} \subset I$ of length $d$, the following estimate holds.
\begin{equation} \label{eq:dynEst:0}
	\sup_{s \in [0,1]} \nrm{\rd_{t,x} A(s)}_{C_{t}(I_{0}, L^{2}_{x})} + \calA_{0}[A_{0}(s=0)](I_{0}) \leq C_{D, \dlt}.
\end{equation}
\end{bigTheorem}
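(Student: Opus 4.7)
The plan is to cover $(-T_{0}, T_{0})$ by subintervals $I_{0}$ of length $d = d(D, \dlt)$, and on each such subinterval to run a short-time bootstrap argument modeled on the $\dot{H}^{1}_{x}$ local-well-posedness proof for \eqref{eq:HPYM} in the caloric-temporal gauge developed in \cite{Oh:6stz7nRe}. At the left endpoint $t_{0}$ of $I_{0}$, the hypothesis $\calI(t_{0}) \leq D$ furnishes ``initial data'' of size $D$ for the dynamic variables $F_{si} = \rd_{s} A_{i}$ (for $s \in (0,1)$, with the scaling-dictated $s$-weights built into $\calI$) and $\Alow_{i}$ (whose data is automatically smoother, thanks to the heat-flow smoothing already exploited in Theorem \ref{thm:idEst}), while the smallness $\bfE < \dlt$ supplies the small-data hypothesis required to invoke the null-form analysis of the associated wave equations.

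\textbf{Bootstrap and wave estimates.} I would introduce space-time norms $\calF(I_{0})$ and $\calAlow(I_{0})$ for $F_{si}$ and $\Alow_{i}$ respectively --- combinations of $C_{t} L^{2}_{x}$, Strichartz, and null-frame/$X^{s,b}$-type norms, carrying the appropriate $s$-weights --- and bootstrap $\calF + \calAlow \leq 2 C_{D,\dlt}$. For $\Alow_{\mu}$, equation \eqref{eq:hyperbolic4Alow} is the temporal-gauge Yang-Mills system with source $\wlow_{\nu}$; since its data at $t_{0}$ is smoothed, standard energy and Strichartz estimates suffice, once $\wlow_{\nu}$ has been controlled. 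For $F_{si}$, the wave equation \eqref{eq:hyperbolic4F} contains the marginal quadratic term $2\LieBr{A_{\ell} - \Alow_{\ell}}{\rd^{\ell} F_{si}}$, which I would handle exactly as sketched in \S \ref{subsec:intro:lwp}: the caloric-gauge identity $A_{\ell} - \Alow_{\ell} = -\int_{0}^{1} F_{s\ell}(s') \, \ud s'$, together with the covariant-divergence-freeness $\covD^{\ell} F_{s\ell} = 0$, recasts this nonlinearity as $s$-integrated null forms of the type $Q_{jk}(\abs{\rd_{x}}^{-1} F_{s}, F_{si})$ plus tamer error terms, to which the bilinear null-form inequalities adapted in \cite{Oh:6stz7nRe} apply. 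Cubic and higher nonlinearities are absorbed via Strichartz and the bootstrapped $L^{\infty}_{t}$ control.

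\textbf{Bound on $A_{0}(s=0)$.} The genuinely new ingredient is the control of $\calA_{0}[A_{0}(s=0)](I_{0})$, obtained through the tension field. In the caloric gauge, $w_{0} = -F_{s0} = -\rd_{s} A_{0}$, so $A_{0}(s=0) = -\int_{0}^{1} w_{0}(s) \, \ud s$. The tension field $w_{\nu}$ satisfies the covariant parabolic equation \eqref{eq:covParabolic4w} with \emph{zero} Cauchy data at $s = 0$, since $A_{\mu}(s=0)$ solves \eqref{eq:hyperbolicYM}. Applying the covariant parabolic techniques of Section \ref{sec:prelim}, with forcing controlled by the bootstrapped $\calF, \calAlow$ and by $\bfE < \dlt$, then yields the $L^{3}_{x}, L^{\infty}_{x}$-type spatial bounds on $A_{0}(s=0)$ and its derivatives that comprise $\calA_{0}$, uniformly on $I_{0}$.

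\textbf{Closing and main obstacle.} Choosing $d = d(D, \dlt)$ sufficiently small, the cubic and forcing contributions in the bootstrap carry positive powers of $d$ (from time integration) or of $\dlt$ (from the energy bound), allowing the bootstrap to close at constants of the form $C_{D,\dlt}$. The principal technical obstacle is in the $F_{si}$ wave estimate: executing the $s$-integrated null-form manipulation of $2\LieBr{A_{\ell} - \Alow_{\ell}}{\rd^{\ell} F_{si}}$ with precisely the $s$-weights dictated by the definition of $\calI$, and controlling the error terms arising from the discrepancy between $\covD^{\ell}$ and $\rd^{\ell}$ by the bootstrapped norms with a smallness gain from either $d$ or $\dlt$. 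Only with this careful bookkeeping does the bootstrap close with the correct dependence on $D$ and $\dlt$.
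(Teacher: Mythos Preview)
Your overall strategy is correct and matches the paper's, but you have misidentified where the new work lies and how the $F_{s0}$ estimate is obtained.

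The entire wave-equation bootstrap you describe --- the norms $\calF(I_{0}), \calAlow(I_{0})$, the null-form manipulation of $2\LieBr{A_{\ell} - \Alow_{\ell}}{\rd^{\ell} F_{si}}$, the Strichartz estimates for $\Alow$ --- is already carried out in the companion paper \cite{Oh:6stz7nRe} and is simply quoted here as Lemma \ref{lem:ingr4dynEst}. So the obstacle you flag as ``principal'' (the $s$-weighted null-form bookkeeping) is not new in this proof. What is new is the estimate for $\calE(t)$, the weighted norm of $F_{s0}$, and this is exactly where the paper departs from your description.

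You propose to control $F_{s0}$ via the covariant parabolic equation \eqref{eq:covParabolic4w} with zero data, with ``forcing controlled by the bootstrapped $\calF, \calAlow$''. The paper instead proves (Proposition \ref{prop:impEst4Fs0}, via Proposition \ref{prop:pEst4covFs0}) that $\calE(t) \leq C_{D,\dlt}$ holds as a \emph{fixed-time a priori bound}, using only the hypotheses $\bfE(t) < \dlt$ and $\calIAlow(t) \leq D$ and \emph{no} reference to the bootstrap norms. The forcing $2\LieBr{\tensor{F}{_{0}^{\ell}}}{F_{s\ell}}$ in the parabolic equation for $F_{s0}$ is controlled directly by the covariant parabolic estimates of Proposition \ref{prop:pEst4covF} and Corollary \ref{cor:pEst4covFs}, which bound all of $\bfF$ and $\bfF_{s}$ by $\sqrt{\bfE}$ whenever $\bfE < \dlt$; the dependence on $D$ enters only when converting covariant to ordinary derivatives (Corollary \ref{cor:cov2u}). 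This decoupling is the point: $\calE$ is bounded before the bootstrap begins, and then Parts (3), (4) of Lemma \ref{lem:ingr4dynEst} close the bootstrap for $\calF + \calAlow$ using the already-known bound on $\calE$ and smallness of $\abs{I_{0}}$. Your version, with $\calE$ depending on $\calF, \calAlow$, could in principle be made to work but would entangle the parabolic and hyperbolic estimates and obscure why smallness of $\dlt$ (rather than of the time interval) suffices for the $F_{s0}$ equation.
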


In essence, Theorem \ref{thm:dynEst} is a result of a fairly standard local-in-time analysis of the wave equations of \eqref{eq:HPYM}. However, there is a little twist, which necessitates the extra hypotheses \eqref{eq:dynEst:hyp} and demands an explanation. Among the equations of \eqref{eq:HPYM} is an equation for $F_{s0}$ which, unlike the other components $F_{si}$, is parabolic. As such, smallness of the time interval \emph{cannot} be utilized to solve this equation in a perturbative manner\footnote{In \cite{Oh:6stz7nRe}, this issue is bypassed by keeping the lengths of the $s$- and the time intervals fixed and requiring the size of the data to be small by scaling. If one unwinds the scaling, this amounts to taking the length of both the $s$- and the time intervals small.}. What saves us is the fact that the parabolic equation for $F_{s0}$ is \emph{covariant}, and therefore can be analyzed using the covariant techniques presented in \S \ref{subsec:covTech}. The first inequality of \eqref{eq:dynEst:hyp} provides the necessary smallness for this analysis, whereas the second one is needed to estimate the errors arising from switching covariant derivatives to usual derivatives. A rigorous proof will be given in Section \ref{sec:pfOfDynEst}.

We are now prepared to give a proof of the Main Theorem, under the assumption that Theorems \ref{thm:idEst}, \ref{thm:ctrlByE} and \ref{thm:dynEst} are true. 

\begin{proof} [Proof of the Main Theorem, assuming Theorems \ref{thm:idEst}, \ref{thm:ctrlByE} and \ref{thm:dynEst}]
To begin with, let us consider a \emph{regular} initial data set.  Applying Theorem \ref{thm:lwp4YM} to $(\Aini_{i}, \Eini_{i})$, there exists a unique regular solution to \eqref{eq:hyperbolicYM} in the temporal gauge on some time interval centered at $0$, which we will denote by $\Atemp_{\mu}$. We will first show this solution exists globally in time. 

For the purpose of contradiction, suppose that the solution $\Atemp_{\mu}$ cannot be extended globally as a unique regular solution to \eqref{eq:hyperbolicYM} in the temporal gauge. Then there exists a positive finite number $0 < T_{0} < \infty$, which is the largest positive number for which the solution $\Atemp_{\mu}$ can be extend as a regular solution on $(-T_{0}, T_{0})$. We claim that there exists a finite positive constant $C = C_{\calIini, \bfE[\overline{\bfF}], T_{0}}$, which depends only on the initial data and $T_{0}$, such that the following inequality holds.
\begin{equation}\label{eq:mainThm:pf:0}
\sup_{t \in (-T_{0}, T_{0})} \nrm{\Atemp_{\mu}(t)}_{\dot{H}^{1}_{x}} \leq C_{\calIini, \bfE[\overline{\bfF}], T_{0}} < \infty.
\end{equation}

Let us complete the proof of the Main Theorem first, under the assumption that the claim is true. If the claim is true, then the solution can be extended as a unique regular solution to $(-T_{0}-\eps, T_{0}+\eps)$ for some $\eps >0$ by Theorem \ref{thm:lwp4YM}, which is a contradiction. It follows that $T_{0} = \infty$, and thus $\Atemp_{\mu}$ can be extended globally in time as a unique regular solution to \eqref{eq:hyperbolicYM} in the temporal gauge. Observe that the estimate \eqref{eq:mainThm:pf:0} still holds for the global solution $\Atemp_{\mu}$ for every $T_{0} > 0$.

Next, Lemma \ref{lem:regApprox} implies that an admissible initial data can be approximated by a sequence of regular initial data sets $(\Aini_{(n) i}, \Eini_{(n) i})$. Let us denote the corresponding unique global regular solutions by $A_{(n)\mu}$. Using Theorem \ref{thm:lwp4YM} repeatedly (with the help of \eqref{eq:mainThm:pf:0}), the following statement may be proved: For every $T_{1} > 0$, the sequence of regular solutions $A_{(n)\mu}$ restricted to the time interval $(-T_{1}, T_{1})$ is a Cauchy sequence in the topology $C_{t}((-T_{1}, T_{1}), \dot{H}^{1}_{x} \cap L^{3}_{x})$. Hence a limit $A_{\mu}$ exists on $(-T_{1}, T_{1})$. Moreover, it is also possible to show that $\rd_{t} A_{(n)\mu} \to \rd_{t} A_{\mu}$ in $C_{t}((-T_{1}, T_{1}), L^{2}_{x})$. Thus it follows that $A_{\mu}$ is an admissible solution to \eqref{eq:hyperbolicYM} in the temporal gauge on $(-T_{1}, T_{1})$. Uniqueness among the class of admissible solutions follows from the corresponding statement for regular solutions. As $T_{1} > 0$ is arbitrary, $A_{\mu}$ is global, and the Main Theorem follows.

We only have left to establish the claim \eqref{eq:mainThm:pf:0}, which is a rather straightforward application of Theorems \ref{thm:idEst}, \ref{thm:ctrlByE} and \ref{thm:dynEst}. First, by scaling, we may assume that $\nrm{\Aini}_{\dot{H}^{1}} < \dlt_{P}$ and $\bfE[\overline{\bfF}] < \dlt$, i.e., \eqref{eq:idEst:hyp} holds. This allows us to apply Theorem \ref{thm:idEst}, from which we obtain a regular gauge transform $V$ and a regular solution $A_{\bfa}$ to \eqref{eq:HPYM} in the caloric-temporal gauge on $(-T_{0}, T_{0}) \times \bbR^{3} \times [0,1]$ such that \eqref{eq:idEst:0} holds. By Theorem \ref{thm:ctrlByE}, along with the estimate for $\calI(0)$ in \eqref{eq:idEst:1}, we see that
\begin{equation*}
\sup_{t \in (-T_{0}, T_{0})} \calI(t) \leq C_{\calIini, \bfE[\overline{\bfF}], T_{0}} < \infty.
\end{equation*}

To use Theorem \ref{thm:dynEst}, let us cover $(-T_{0}, T_{0})$ by subintervals of length $d$; the number of subintervals required can be bounded from above by, say, $10 (T_{0}/d)$. Applying Theorem \ref{thm:dynEst} on each subinterval, we are led to the estimate
\begin{equation*}
\sup_{s \in [0,1]} \nrm{\rd_{t,x} A(s)}_{C_{t}(I_{0}, L^{2}_{x})} + \calA_{0}[A_{0}(s=0)](I_{0}) \leq C_{\calIini, \bfE[\overline{\bfF}], T_{0}} < \infty.
\end{equation*}

The only remaining step is to transfer the above estimate to $\Atemp_{\mu}$; for this purpose, observe from \eqref{eq:idEst:0} that $V$ satisfies $\rd_{t} V = V A_{0}(s=0)$. Using Lemma \ref{lem:est4gt2temporal}, along with the previous estimates for $\calA_{0}$ and $\Vini$, we are led to the following estimate for the gauge transform $V$:
\begin{equation*}
\nrm{V}_{L^{\infty}_{t,x}} + \nrm{\rd_{t,x} V}_{L^{\infty}_{t} L^{3}_{x}} + \nrm{\rd_{x}^{(2)} V}_{L^{\infty}_{t} L^{2}_{x}} \leq C_{\calIini, \bfE[\overline{\bfF}], T_{0}} < \infty.
\end{equation*}

Here, all norms have been taken over $(-T_{0}, T_{0}) \times \bbR^{3}$. The preceding estimate, applied to the formula \eqref{eq:idEst:0}, implies \eqref{eq:mainThm:pf:0} as desired. \qedhere
\end{proof}
 
\section{Preliminaries} \label{sec:prelim}
\subsection{p-normalized norms and the Correspondence Principle} \label{subsec:prelim:pnorm}
For the purpose of studying parabolic equations, it is quite convenient to utilize norms that are normalized according to the scaling properties of these equations. An estimate concerning homogeneous norms can be easily translated to the corresponding estimate in terms of the normalized norms, via a simple principle we dub the \emph{Correspondence Principle}. For a more detailed discussion, we refer the reader to \cite[\S 3.3 -- 3.5]{Oh:6stz7nRe}; here, we will only give a brief summary which will suffice for the use in the present paper.

The basic idea is that $s^{1/2}$ scales like $x$, where $s$ is the time parameter for the parabolic equation. Therefore, whatever `dimension of $x$' a norm has, we will normalize by compensating it with the appropriate factor of $s^{-1/2}$. 

To be more precise, consider a norm $\nrm{\cdot}_{X}$ defined for functions on $\bbR^{3}$, which is homogeneous of degree $2 \ell$ in the sense that $\nrm{\phi(\cdot)}_{X} = \lmb^{2 \ell} \nrm{\phi(\cdot / \lmb)}_{X}$. This indicates that the norm $\nrm{\cdot}_{X}$ has the `dimension of $x^{2 \ell}$', and therefore we shall normalize it by multiplying by $s^{-\ell}$. Accordingly, we define the \emph{p-normalization}\footnote{The p in `p-normalization' stands for `parabolic'.} of $X$ at $s$ (denoted by the calligraphic typeface $\calX(s)$) as 
\begin{equation*}
	\nrm{\cdot}_{\calX(s)} := s^{-\ell} \nrm{\cdot}_{X}.
\end{equation*}

The only p-normalizations considered in this paper are those of $L^{q}_{x}$ and $\dot{H}^{m}_{x}$, which are denoted by $\calL^{q}_{x}(s)$ and $\dot{\calH}^{m}_{x}(s)$, respectively.

A derivative, such as $\rd_{i}$ or $\covD_{i}$, has the `dimension of $x^{-1}$'. Therefore, the \emph{p-normalizations} of $\rd_{i}$ and $\covD_{i}$ at $s$ (denoted by $\nb_{i}(s)$ and $\calD_{i}(s)$, respectively) are defined as
\begin{equation*}
	\nb_{i}(s) := s^{1/2} \rd_{i}, \quad \calD_{i}(s) := s^{1/2} \covD_{i}.
\end{equation*}

An estimate concerning homogeneous norms (e.g. H\"older, Sobolev Gagliardo-Nirenberg and etc.) naturally leads to a corresponding estimate in terms of the respective p-normalized norms; we will refer to this process as the \emph{Correspondence Principle}. 

\begin{corrPrinciple}
Suppose that we are given an estimate in terms of norms $X_{i}$ of scalar- or $\LieAlg$-valued functions $\sgm_{i} = \sgm_{i}(x)$, all of which are homogeneous. Suppose furthermore that the estimate is scale-invariant, in the sense that both sides transform the same under scaling. 

Starting from such an estimate, make the following substitutions on both sides: 
\begin{equation*}
\rd_{x} \to \nb_{x}(s), \quad  \covD_{x} \to \calD_{x}(s), \quad  X_{i} \to \calX_{i}(s). 
\end{equation*}

Furthermore, substitute each $\sgm_{i}(x)$ by a function $G_{i}(x, s)$ on $\bbR^{3} \times J$. Then the resulting estimate still holds with the same constant for every $s \in J$.
\end{corrPrinciple}

The `proof' of this principle is quite simple; it amounts to the observation that the weight of $s$ required to p-normalize each side is the same. 

Next, let us define some norms with respect to the $s$-variable. Let $J \subset [0,\infty)$ be an interval, and consider a measurable function $f = f(s)$ defined on $J$. For $\ell \in \bbR$ and $1 \leq p \leq \infty$, we define the  norm $\calL^{\ell,p}_{s}(J)$ by
\begin{equation*}
	\nrm{f}_{\calL^{\ell,p}_{s}(J)} := \bb( \int_{J} (s^{\ell} f(s))^{p} \frac{\ud s}{s} \bb)^{1/p}.
\end{equation*}

The following lemma, which is nothing but the H\"older inequality in the $s$-variable, is quite useful.
\begin{lemma} [H\"older for $\calL^{\ell,p}_{s}$] \label{lem:Holder4Ls}
Let $\ell, \ell_{1}, \ell_{2} \geq 0$, $1 \leq p, p_{1}, p_{2} \leq \infty$ and $f, g$ functions on $J = (0,s_{0})$ (or $J = (0, s_{0}]$) such that $\nrm{f}_{\calL^{\ell_{1}, p_{1}}_{s}}, \nrm{g}_{\calL^{\ell_{2}, p_{2}}_{s}} < \infty$. Then we have
\begin{equation*} 
	\nrm{fg}_{\calL^{\ell,p}_{s}(J)} \leq C s_{0}^{\ell-\ell_{1}-\ell_{2}} \nrm{f}_{\calL^{\ell_{1},p_{1}}_{s}(J)} \nrm{g}_{\calL^{\ell_{2},p_{2}}_{s}(J)}
\end{equation*}
provided that either $\ell = \ell_{1} + \ell_{2}$ and $\frac{1}{p} = \frac{1}{p_{1}} + \frac{1}{p_{2}}$, or $\ell > \ell_{1} + \ell_{2}$ and $\frac{1}{p} \geq \frac{1}{p_{1}} + \frac{1}{p_{2}}$. In the former case, $C = 1$, while in the latter case, $C$ depends on $\ell-\ell_{1}-\ell_{2}$ and $\frac{1}{p} - \frac{1}{p_{1}} - \frac{1}{p_{2}}$.
\end{lemma}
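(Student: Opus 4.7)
The plan is to reduce the entire statement to H\"older's inequality on the measure space $(J, \ud s/s)$. Observe that by definition $\nrm{h}_{\calL^{\ell,p}_{s}(J)}$ is precisely the weighted Lebesgue norm $\nrm{s^{\ell} h(s)}_{L^{p}(J, \, \ud s/s)}$. In the \emph{equality} case $\ell = \ell_{1} + \ell_{2}$ and $1/p = 1/p_{1} + 1/p_{2}$, I would factor
\[
s^{\ell}(fg)(s) = \bigl(s^{\ell_{1}} f(s)\bigr) \cdot \bigl(s^{\ell_{2}} g(s)\bigr)
\]
and apply the classical H\"older inequality with respect to $\ud s/s$; this delivers the desired estimate with constant $C = 1$ and no power of $s_{0}$.

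For the \emph{inequality} case, the idea is to first use the equality case and then gain the factor $s_{0}^{\ell - \ell_{1} - \ell_{2}}$ through a one-variable embedding in the $s$-variable alone. Set $\ell' := \ell_{1} + \ell_{2}$ and $1/p' := 1/p_{1} + 1/p_{2}$; by the equality case already proven,
\[
\nrm{fg}_{\calL^{\ell', p'}_{s}(J)} \leq \nrm{f}_{\calL^{\ell_{1}, p_{1}}_{s}(J)} \nrm{g}_{\calL^{\ell_{2}, p_{2}}_{s}(J)},
\]
so it suffices to establish the one-variable embedding $\nrm{h}_{\calL^{\ell, p}_{s}(J)} \leq C \, s_{0}^{\ell - \ell'} \nrm{h}_{\calL^{\ell', p'}_{s}(J)}$ whenever $\ell > \ell'$ and $1/p \geq 1/p'$, and then apply it with $h = fg$.

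The embedding itself splits into two sub-cases. If $1/p = 1/p'$, I would simply bound $s^{\ell - \ell'} \leq s_{0}^{\ell - \ell'}$ pointwise on $J$ inside the integral defining the $L^{p}(\ud s/s)$-norm. If $1/p < 1/p'$, define $q$ by $1/q := 1/p - 1/p' > 0$; H\"older's inequality in the $s$-variable against $\ud s/s$ yields
\[
\nrm{s^{\ell} h}_{L^{p}(\ud s/s)} \leq \nrm{s^{\ell - \ell'}}_{L^{q}(J, \, \ud s/s)} \nrm{s^{\ell'} h}_{L^{p'}(\ud s/s)},
\]
and a direct computation gives $\nrm{s^{\ell - \ell'}}_{L^{q}(J, \, \ud s/s)} = \bigl((\ell-\ell')q\bigr)^{-1/q} s_{0}^{\ell - \ell'}$, producing the claimed constant depending only on $\ell - \ell'$ and $1/p - 1/p'$.

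There is no substantial obstacle here; the only point requiring care is that the strict hypothesis $\ell > \ell_{1} + \ell_{2}$ is exactly what makes the auxiliary integral $\int_{0}^{s_{0}} s^{(\ell - \ell')q} \, \ud s/s$ converge at the origin $s = 0$. This is why the statement cannot be weakened to $\ell \geq \ell_{1} + \ell_{2}$ whenever one loses integrability in $p$. Modulo this bookkeeping of exponents, the entire argument is two applications of H\"older's inequality.
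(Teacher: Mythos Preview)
Your proof is correct and matches the paper's intent: the paper does not give a detailed proof, merely describing the lemma as ``nothing but the H\"older inequality in the $s$-variable.'' Note one typographical slip in your second sub-case: ``$1/p < 1/p'$'' should read ``$1/p > 1/p'$'' (as is clear from your very next clause $1/q := 1/p - 1/p' > 0$).
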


Finally, let us consider a mix of the preceding two types of norms. For a function $\psi = \psi(x, s)$ defined on $\bbR^{3}\times J$ such that $s \to \nrm{\psi(s)}_{\calX(s)}$ is measurable on $J$, we define the norm $\calL^{\ell, p}_{s} \calX(J)$ to be
\begin{equation*}
	\nrm{\psi}_{\calL^{\ell,p}_{s} \calX(J)} := \nrm{\nrm{\psi(s)}_{\calX(s)}}_{\calL^{\ell, p}_{s} (J)}.
\end{equation*}

In order to derive estimates in terms of such norms, we will often use the Correspondence Principle and Lemma \ref{lem:Holder4Ls} in tandem. Let us demonstrate this with an example. Starting with a homogeneous estimate (which follows from H\"older and Corollary \ref{cor:covSob})
\begin{equation*}
	\nrm{\calO(\sgm_{1}, \sgm_{2})}_{L^{2}_{x}} 
	\leq C \nrm{\sgm_{1}}_{L^{3}_{x}} \nrm{\sgm_{2}}_{L^{6}_{x}} 
	\leq C \nrm{\sgm_{1}}^{1/2}_{L^{2}_{x}} \nrm{\covD_{x} \sgm_{1}}^{1/2}_{L^{2}_{x}} \nrm{\covD_{x} \sgm_{2}}_{L^{2}_{x}},
\end{equation*}
applying the Correspondence Principle, taking the $\calL^{\ell, p}_{s}(0, s_{0})$ norm and using Lemma \ref{lem:Holder4Ls}, we arrive at
\begin{equation*}
	\nrm{\calO(G_{1}, G_{2})}_{\calL^{\ell, p}_{s} \calL^{2}_{x}(0, s_{0})} 
	\leq C s_{0}^{\ell-\ell_{1}-\ell_{2}} \nrm{G_{1}}_{\calL^{\ell_{1}, p_{1}}_{s} \calL^{2}_{x}(0, s_{0})}^{1/2} 
		\nrm{\calD_{x} G_{1}}_{\calL^{\ell_{1}, p_{1}}_{s} \calL^{2}_{x}(0, s_{0})}^{1/2}
		 \nrm{\calD_{x} G_{2}}_{\calL^{\ell_{2}, p_{2}}_{s} \calL^{2}_{x}(0, s_{0})},
\end{equation*}
where $C, \ell, \ell_{1}, \ell_{2}, p, p_{1}, p_{2}$ are as in Lemma \ref{lem:Holder4Ls}.

\subsection{Covariant techniques} \label{subsec:covTech}
Here, we collect some techniques which are applicable to the study of \emph{covariant} parabolic equations. The use of such techniques, instead of those for handling the usual scalar heat equation, is the key analytic difference between this paper and \cite{Oh:6stz7nRe}. We remark that estimates like those in this subsection were also established in Tao \cite{Tao:2008wn} and Smith \cite{Smith:2011ef} in the case $\LieGrp = \mathrm{SO}(n)$.

\begin{lemma}[Kato's inequality] \label{lem:kato}
Let $\sgm$ be a $\LieAlg$-valued function. Then
\begin{equation} \label{eq:kato:1}
	\abs{\rd_{x} \abs{\sgm}} \leq \abs{\covD_{x} \sgm}
\end{equation}
in the distributional sense.
\end{lemma}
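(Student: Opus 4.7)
The plan is to reduce to the classical Kato inequality for scalar functions by exploiting the bi-invariance of the inner product on $\LieAlg$. The key algebraic observation is that for any $A \in \LieAlg$ and any $\LieAlg$-valued $\sgm$, bi-invariance gives $([A, \sgm], \sgm) = 0$, and hence
\begin{equation*}
	\tfrac{1}{2} \rd_{x} (\sgm, \sgm) = (\rd_{x} \sgm, \sgm) = (\rd_{x} \sgm + [A_{x}, \sgm], \sgm) = (\covD_{x} \sgm, \sgm).
\end{equation*}
In other words, the bracket correction in $\covD_{x}$ is automatically invisible when paired back against $\sgm$ itself; this is precisely the geometric statement underlying bi-invariance that makes covariant Kato equivalent to scalar Kato.

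Next, I would handle the non-smoothness of $\abs{\sgm} = \sqrt{(\sgm,\sgm)}$ at the zero set by the standard regularization: set $\abs{\sgm}_{\eps} := \sqrt{(\sgm, \sgm) + \eps^{2}}$ for $\eps > 0$, which is smooth wherever $\sgm$ is. Using the identity above,
\begin{equation*}
	\rd_{x} \abs{\sgm}_{\eps} = \frac{(\covD_{x} \sgm, \sgm)}{\abs{\sgm}_{\eps}},
\end{equation*}
so Cauchy--Schwarz with respect to $(\cdot,\cdot)$ combined with $\abs{\sgm} \leq \abs{\sgm}_{\eps}$ yields the pointwise bound
\begin{equation*}
	\abs{\rd_{x} \abs{\sgm}_{\eps}} \leq \frac{\abs{\covD_{x} \sgm}\, \abs{\sgm}}{\abs{\sgm}_{\eps}} \leq \abs{\covD_{x} \sgm}.
\end{equation*}

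Finally I would pass to the distributional limit. Since $\abs{\sgm}_{\eps} \to \abs{\sgm}$ locally uniformly as $\eps \to 0$ and the approximants $\rd_{x} \abs{\sgm}_{\eps}$ are uniformly bounded in $L^{1}_{\mathrm{loc}}$ by $\abs{\covD_{x} \sgm}$ (which I assume is locally integrable, else one first truncates against a test function), one obtains $\rd_{x} \abs{\sgm}_{\eps} \rightharpoonup \rd_{x} \abs{\sgm}$ weakly as distributions and the inequality is preserved under weak limits against any nonnegative test function. The only genuinely delicate point is the behavior on $\set{\sgm = 0}$: there $\rd_{x} \abs{\sgm}$ could a priori pick up singular contributions, but the uniform $L^{1}_{\mathrm{loc}}$ control by $\abs{\covD_{x} \sgm}$ rules this out, giving the asserted distributional inequality $\abs{\rd_{x} \abs{\sgm}} \leq \abs{\covD_{x} \sgm}$. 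No substantial obstacle is expected; the only thing one must be careful about is phrasing the approximation so that it is valid under the minimal regularity assumed on $\sgm$ in the applications of this lemma elsewhere in the paper.
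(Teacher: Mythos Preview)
Your proposal is correct and follows essentially the same approach as the paper: regularize $\abs{\sgm}$ by $\sqrt{(\sgm,\sgm)+\eps}$, use bi-invariance to replace $\rd_{x}\sgm$ by $\covD_{x}\sgm$ in the numerator, apply Cauchy--Schwarz, and pass to the limit against a nonnegative test function. The paper's argument is slightly terser (it bounds $\rd_{x}\abs{\sgm}_{\eps}$ and $-\rd_{x}\abs{\sgm}_{\eps}$ separately rather than $\abs{\rd_{x}\abs{\sgm}_{\eps}}$ directly), but the content is identical.
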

\begin{remark} 
Kato's inequality is often also referred to as the \emph{diamagnetic inequality}. (See, e.g., \cite{Tao:2008wn} and \cite{Smith:2011ef}).
\end{remark}
\begin{proof} 
Let $\eps > 0$. We compute
\begin{equation*}
	\rd_{x} \sqrt{(\sgm, \sgm) + \eps} = \frac{(\sgm, \covD_{x} \sgm)}{\sqrt{(\sgm, \sgm) + \eps}} \leq \abs{\frac{\sqrt{(\sgm, \sgm)}}{\sqrt{(\sgm, \sgm) + \eps}}}\cdot \abs{\covD_{x} \sgm} \leq \abs{\covD_{x} \sgm}.
\end{equation*} 

Testing against a non-negative test function and taking $\eps \to 0$, we see that $\rd_{x} \abs{\sgm} \leq  \abs{\covD_{x} \sgm}$ in the distributional sense. Repeating the same argument to $- \rd_{x} \sqrt{(\sgm, \sgm) + \eps}$, we obtain \eqref{eq:kato:1}.
\end{proof}

The following Sobolev inequalities for covariant derivatives are easy consequences of Kato's inequality.
\begin{corollary}[Sobolev and Gagliardo-Nirenberg inequalities for covariant derivatives] \label{cor:covSob}
For a regular $\LieAlg$-valued function $\sgm$, the following estimates hold.
\begin{align}
	&\nrm{\sgm}_{L^{3}_{x}} \leq C \nrm{\sgm}_{L^{2}_{x}}^{1/2} \nrm{\covD_{x} \sgm}_{L^{2}_{x}}^{1/2},  \label{eq:covSob:1} \\
	&\nrm{\sgm}_{L^{6}_{x}} \leq C \nrm{\covD_{x} \sgm}_{L^{2}_{x}}, \label{eq:covSob:2} \\
	&\nrm{\sgm}_{L^{\infty}_{x}} \leq C \nrm{\covD_{x} \sgm}_{L^{2}_{x}}^{1/2} \nrm{\covD_{x}^{(2)} \sgm}_{L^{2}_{x}}^{1/2}. \label{eq:covSob:3}
\end{align}
\end{corollary}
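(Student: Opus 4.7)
The plan is to reduce each of the three inequalities to its classical (scalar) counterpart applied to the nonnegative scalar function $|\sgm|$, then translate ordinary derivatives of $|\sgm|$ into covariant derivatives of $\sgm$ via Kato's inequality (Lemma~\ref{lem:kato}).

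For \eqref{eq:covSob:1} I would apply the three-dimensional Gagliardo-Nirenberg interpolation $\nrm{u}_{L^3_x} \leq C \nrm{u}_{L^2_x}^{1/2} \nrm{\rd u}_{L^2_x}^{1/2}$ to $u = |\sgm|$; the identity $\nrm{\,|\sgm|\,}_{L^p_x} = \nrm{\sgm}_{L^p_x}$ and the pointwise (distributional) bound $|\rd_x |\sgm|| \leq |\covD_x \sgm|$ then immediately give the result. Inequality \eqref{eq:covSob:2} follows by exactly the same recipe, this time feeding $|\sgm|$ into the standard Sobolev embedding $\nrm{u}_{L^6_x} \leq C \nrm{\rd u}_{L^2_x}$ on $\bbR^3$.

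The third inequality is the subtle one: one cannot apply a scalar Gagliardo-Nirenberg that involves $\rd^{(2)} u$ directly to $u = |\sgm|$, because the absolute value is merely Lipschitz and in general has no classical second derivative. The workaround I envisage is to bootstrap through the $L^6_x$ estimate just established. First, apply the three-dimensional scalar Gagliardo-Nirenberg $\nrm{u}_{L^\infty_x} \leq C \nrm{u}_{L^6_x}^{1/2} \nrm{\rd u}_{L^6_x}^{1/2}$ (valid by scaling together with Morrey's embedding $W^{1,p} \hookrightarrow L^\infty$ for $p>3$) to $u = |\sgm|$, and invoke Kato's inequality to arrive at $\nrm{\sgm}_{L^\infty_x} \leq C \nrm{\sgm}_{L^6_x}^{1/2} \nrm{\covD_x \sgm}_{L^6_x}^{1/2}$. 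Then I would apply \eqref{eq:covSob:2} twice: once to $\sgm$ itself to bound $\nrm{\sgm}_{L^6_x} \leq C \nrm{\covD_x \sgm}_{L^2_x}$, and once componentwise to the $\LieAlg$-valued tensor $\covD_x \sgm$ (so that the role of $\covD_x$ in the first factor is played by $\covD_x^{(2)}$) to bound $\nrm{\covD_x \sgm}_{L^6_x} \leq C \nrm{\covD_x^{(2)} \sgm}_{L^2_x}$; combining these yields \eqref{eq:covSob:3}. The main obstacle is precisely the non-smoothness of $|\sgm|$ just described, which is what rules out a one-step interpolation and forces the detour through the $L^6_x$ inequality applied to $\covD_x \sgm$.
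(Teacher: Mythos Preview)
Your proposal is correct and matches the paper's approach: the paper states only that the corollary is an ``easy consequence of Kato's inequality'' without giving further details, and your reduction to the scalar Gagliardo--Nirenberg and Sobolev inequalities applied to $|\sgm|$ is precisely how one implements that remark. Your handling of \eqref{eq:covSob:3}---bootstrapping through the $L^{6}_{x}$ estimate applied to $\covD_{x}\sgm$ rather than trying to differentiate $|\sgm|$ twice---is the right way to sidestep the non-smoothness of the absolute value.
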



Next, consider an inhomogeneous covariant heat equation
\begin{equation} \label{eq:covHeat}
	(\covD_{s} - \covD^{\ell} \covD_{\ell})\sgm = \calN.
\end{equation}

Adapting the usual proof of the energy integral inequality (integration by parts) for the ordinary heat equation to \eqref{eq:covHeat}, we obtain the following gauge-invariant version of the energy integral inequality.

\begin{lemma}[Energy integral inequality] \label{lem:pEst4covHeat}
Let $\ell \in \bbR$, $(s_{1}, s_{2}] \subset (0, \infty)$ and suppose that $\sgm$ and $A_{i}$ are `sufficiently nice\footnote{A sufficient condition for \eqref{eq:pEst4covHeat:1} to hold, which will be verifiable in applications below, is that $\sgm$ is smooth and the left-hand side of \eqref{eq:pEst4covHeat:1} is finite.}'. Then the following estimate holds.
\begin{equation} \label{eq:pEst4covHeat:1}
\begin{aligned}
	&\nrm{\sgm}_{\calL^{\ell, \infty}_{s} \calL^{2}_{x}(s_{1}, s_{2}]} + \nrm{\calD_{x} \sgm}_{\calL^{\ell, 2}_{s} \calL^{2}_{x}(s_{1}, s_{2}]} \\
	& \qquad \leq  C s_{1}^{\ell} \nrm{\sgm(s_{1})}_{\calL^{2}_{x}(s_{1})} + C (\ell - 3/4) \nrm{\sgm}_{\calL^{\ell,2}_{s} \calL^{2}_{x}(s_{1}, s_{2}]} 
	+ C \nrm{\calN}_{\calL^{\ell + 1, 1}_{s} \calL^{2}_{x} (s_{1}, s_{2}]}.
\end{aligned}
\end{equation}
\end{lemma}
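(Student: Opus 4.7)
\medskip

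The plan is to reduce everything to a pointwise energy identity for $|\sgm|^{2}$, multiply by the correct weight of $s$, and integrate. First I would derive the pointwise identity
\[
\tfrac{1}{2} \rd_{s} (\sgm, \sgm) - \tfrac{1}{2} \lap (\sgm, \sgm) + (\covD_{\ell} \sgm, \covD^{\ell} \sgm) = (\sgm, \calN).
\]
This is where the bi-invariance of the inner product is essential: the identity $(X, [Y, X]) = 0$ eliminates the $[A_{s}, \sgm]$ contribution from $(\sgm, \covD_{s} \sgm)$, and also yields the compatibility relations $(\sgm, \covD_{\ell} \sgm) = \tfrac{1}{2} \rd_{\ell} (\sgm, \sgm)$ and $(\sgm, \covD^{\ell} \covD_{\ell} \sgm) = \rd^{\ell}(\sgm, \covD_{\ell} \sgm) - (\covD^{\ell} \sgm, \covD_{\ell} \sgm)$. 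Integrating in $x$ over $\bbR^{3}$ (using the `sufficiently nice' hypothesis to discard the Laplacian term and justify differentiation under the integral) yields
\[
\tfrac{1}{2} \rd_{s} \nrm{\sgm}_{L^{2}_{x}}^{2} + \nrm{\covD_{x} \sgm}_{L^{2}_{x}}^{2} \leq \nrm{\sgm}_{L^{2}_{x}} \nrm{\calN}_{L^{2}_{x}}.
\]

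Next I would multiply by the weight $s^{2\ell - 3/2}$, which is the correct exponent so that the integrals match the p-normalized norms on the left-hand side, and integrate from $s_{1}$ to any $s \in (s_{1}, s_{2}]$. Using $\rd_{s}(s^{2\ell - 3/2} E) = (2\ell - 3/2) s^{2\ell - 5/2} E + s^{2\ell - 3/2} \rd_{s} E$ with $E(s) := \nrm{\sgm(s)}_{L^{2}_{x}}^{2}$, I obtain
\[
s^{2\ell - 3/2} E(s) + 2 \int_{s_{1}}^{s} s'^{2\ell - 3/2} \nrm{\covD_{x} \sgm}_{L^{2}_{x}}^{2} \, \ud s'
\leq s_{1}^{2\ell - 3/2} E(s_{1}) + (2\ell - 3/2) \int_{s_{1}}^{s} s'^{2\ell - 5/2} E \, \ud s' + 2 \int_{s_{1}}^{s} s'^{2\ell - 3/2} E^{1/2} \nrm{\calN}_{L^{2}_{x}} \, \ud s'.
\]
A direct book-keeping using $\nrm{\cdot}_{\calL^{2}_{x}(s)} = s^{-3/4} \nrm{\cdot}_{L^{2}_{x}}$ and $\nrm{\calD_{x} \sgm}_{\calL^{2}_{x}(s)} = s^{-1/4} \nrm{\covD_{x} \sgm}_{L^{2}_{x}}$ identifies the second term on the right with $2(\ell - 3/4) \nrm{\sgm}_{\calL^{\ell,2}_{s} \calL^{2}_{x}}^{2}$ and the left-hand-side integral with $2 \nrm{\calD_{x} \sgm}_{\calL^{\ell,2}_{s} \calL^{2}_{x}}^{2}$. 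For the forcing term, writing $s^{2\ell - 3/2} E^{1/2} \nrm{\calN}_{L^{2}_{x}} = (s^{\ell - 3/4} E^{1/2})(s^{\ell + 1/4} \nrm{\calN}_{L^{2}_{x}})$ and pulling the first factor out as a supremum gives the bound $2 \nrm{\sgm}_{\calL^{\ell,\infty}_{s} \calL^{2}_{x}} \nrm{\calN}_{\calL^{\ell+1, 1}_{s} \calL^{2}_{x}}$.

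Taking the supremum over $s \in (s_{1}, s_{2}]$ on the left, the above becomes an inequality of the schematic form $X^{2} + Y^{2} \leq D^{2} + C(\ell - 3/4) Z^{2} + 2 X W$, where $X = \nrm{\sgm}_{\calL^{\ell,\infty}_{s} \calL^{2}_{x}}$, $Y = \nrm{\calD_{x} \sgm}_{\calL^{\ell,2}_{s} \calL^{2}_{x}}$, $D = s_{1}^{\ell} \nrm{\sgm(s_{1})}_{\calL^{2}_{x}(s_{1})}$, $Z = \nrm{\sgm}_{\calL^{\ell,2}_{s} \calL^{2}_{x}}$, and $W = \nrm{\calN}_{\calL^{\ell+1,1}_{s} \calL^{2}_{x}}$. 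A standard application of Young's inequality $2XW \leq \tfrac{1}{2} X^{2} + 2 W^{2}$ absorbs the $X^{2}$ term on the right, yielding $\tfrac{1}{2} X^{2} + Y^{2} \leq D^{2} + C(\ell - 3/4) Z^{2} + 2 W^{2}$, whence $X + Y \leq C(D + (\ell - 3/4)^{1/2} Z + W)$, which is the claimed estimate \eqref{eq:pEst4covHeat:1}.

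The only potentially subtle point is the pointwise identity in the first step, and in particular the justification of discarding $\int \lap (\sgm, \sgm) \, \ud x = 0$ and of all the integration-by-parts maneuvers; this is covered by the `sufficiently nice' hypothesis, which in applications will follow from regularity of $\sgm$ together with finiteness of the left-hand side of \eqref{eq:pEst4covHeat:1}. I do not anticipate a genuine obstacle: the argument is exactly the classical energy estimate for the scalar heat equation, translated into the covariant setting via bi-invariance (Lemma \ref{lem:kato}-style reasoning for the inner product) and into p-normalized weights via direct computation.
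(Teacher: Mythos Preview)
Your proposal is correct and follows essentially the same route as the paper's proof: multiply the covariant heat equation by $s^{2\ell-3/2}\sgm$, integrate by parts using bi-invariance, take the supremum in $s$, and absorb the cross term $2XW$ by Cauchy--Schwarz/Young. The only cosmetic difference is that your square-root step yields the coefficient $(\ell-3/4)^{1/2}$ rather than the $(\ell-3/4)$ written in the lemma; the paper's own proof in fact produces the same $(\ell-3/4)^{1/2}$ after taking square roots, so this is a harmless imprecision in the statement that does not affect any of the applications.
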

\begin{proof}
We will carry out a formal computation, discarding all boundary terms at the spatial infinity which arise; it is easy to verify that for `sufficiently nice' $\sgm$ and $A_{i}$, this can be made into a rigorous proof. 

Let $\sbr \in (s_{1}, s_{2}]$. Taking the bi-invariant inner product of the equation $(\covD_{s} - \covD^{\ell} \covD_{\ell})\sgm = \calN$ with $s^{2\ell - 3/2} \sgm$ and integrating by parts over $(s_{1}, \sbr]$, we arrive at
\begin{align*}
	& \frac{1}{2} \sbr^{2\ell - 3/2} \int (\sgm, \sgm)(\sbr) \, \ud x + \int_{s_{1}}^{\sbr} \int s^{2 \ell - 1/2} (\covD^{\ell} \sgm, \covD_{\ell} \sgm)(s) \, \ud x \, \frac{\ud s}{s} \\
	& \quad = \frac{1}{2} s_{1}^{2\ell - 3/2} \int (\sgm, \sgm)(s_{1}) \, \ud x + (\ell - 3/4) \int_{s_{1}}^{\sbr} \int s^{2\ell - 3/2} (\sgm, \sgm)(s) \, \ud x \, \frac{\ud s}{s} \\
	& \qquad + \int_{s_{1}}^{\sbr} s^{2\ell - 1/2} (\calN(s), \sgm(s)) \, \ud x \, \frac{\ud s}{s}.
\end{align*}

Taking the supremum over $s_{1} < \sbr \leq s_{2}$ and rewriting in terms of p-normalized norms, we obtain
\begin{align*}
	\frac{1}{2} \nrm{\sgm}_{\calL^{\ell, \infty}_{s} \calL^{2}_{x}(s_{1}, s_{2}]}^{2} + \nrm{\sgm}_{\calL^{\ell,2}_{s} \dot{\calH}^{1}_{x}(s_{1}, s_{2}]}^{2}
	\leq & \frac{1}{2} s_{1}^{2\ell} \nrm{\sgm(s_{1})}_{\calL^{2}_{x}(s_{1})}^{2} + (\ell - 3/4) \nrm{\sgm}_{\calL^{\ell,2}_{s} \calL^{2}_{x} (s_{1}, s_{2}]}^{2} \\
	& + \nrm{(\calN, \sgm)}_{\calL^{2\ell+1,1}_{s} \calL^{1}_{x}(s_{1}, s_{2}]}.
\end{align*}

By H\"older, Lemma \ref{lem:Holder4Ls} and Cauchy-Schwarz, the last term can be estimated by $\nrm{\calN}_{\calL^{\ell+1,1}_{s} \calL^{2}_{x} (s_{1}, s_{2}]}^{2} + \frac{1}{4} \nrm{\sgm}_{\calL^{\ell, \infty}_{s} \calL^{2}_{x}(s_{1}, s_{2}]}^{2}$, where the latter can be absorbed into the left-hand side. Then taking the square root of both sides, we obtain \eqref{eq:pEst4covHeat:1}. \qedhere
\end{proof}

Proceeding as in the proof of Kato's inequality, we can derive the following parabolic inequality for $\abs{\sgm}$.
\begin{lemma} [Bochner-Weitzenb\"ock-type inequality] \label{lem:heatIneq4abs}
The following inequality holds in the distributional sense.
\begin{equation} \label{eq:heatIneq4abs}
	(\rd_{s}  - \lap) \abs{\sgm} \leq \abs{\calN}. 
\end{equation}
\end{lemma}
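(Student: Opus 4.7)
The plan is to follow the pattern of Kato's inequality (Lemma~\ref{lem:kato}), extended to include the $s$-direction and closed using the equation \eqref{eq:covHeat} itself. I would regularize by $f_{\eps} := \sqrt{(\sgm, \sgm) + \eps}$ for $\eps > 0$, establish a pointwise heat inequality $(\rd_{s} - \lap) f_{\eps} \leq \abs{\calN}$ while $f_{\eps}$ is smooth, and then pass to the distributional limit $\eps \to 0^{+}$.

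The main computation would rely on the covariant Leibniz rule for bi-invariant inner products, $\rd_{\bfa}(\sgm_{1}, \sgm_{2}) = (\covD_{\bfa} \sgm_{1}, \sgm_{2}) + (\sgm_{1}, \covD_{\bfa} \sgm_{2})$ for $\bfa = s, 1, 2, 3$, whose commutator defect vanishes by ad-invariance. This immediately gives $\rd_{s} f_{\eps} = (\sgm, \covD_{s} \sgm)/f_{\eps}$, and a second differentiation in space yields
\begin{equation*}
	\lap f_{\eps} = \frac{(\sgm, \covD^{\ell} \covD_{\ell} \sgm)}{f_{\eps}} + \frac{\abs{\covD_{x} \sgm}^{2}}{f_{\eps}} - \frac{\sum_{i}(\sgm, \covD_{i} \sgm)^{2}}{f_{\eps}^{3}}.
\end{equation*}

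The arithmetic heart of the argument is that the last two terms combine to a nonnegative quantity: componentwise Cauchy--Schwarz gives $\sum_{i}(\sgm, \covD_{i} \sgm)^{2} \leq \abs{\sgm}^{2} \abs{\covD_{x} \sgm}^{2} \leq f_{\eps}^{2} \abs{\covD_{x} \sgm}^{2}$. Hence $\lap f_{\eps} \geq (\sgm, \covD^{\ell} \covD_{\ell} \sgm)/f_{\eps}$, and substituting $\covD^{\ell} \covD_{\ell} \sgm = \covD_{s} \sgm - \calN$ from \eqref{eq:covHeat} produces a cancellation of the $(\sgm, \covD_{s} \sgm)/f_{\eps}$ terms, leaving $(\rd_{s} - \lap) f_{\eps} \leq (\sgm, \calN)/f_{\eps} \leq \abs{\calN}$ after one more Cauchy--Schwarz.

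The final step is to pass to the distributional limit. Since $f_{\eps} \to \abs{\sgm}$ locally uniformly as $\eps \to 0^{+}$ and the right-hand side is independent of $\eps$, testing against an arbitrary nonnegative smooth compactly supported test function and moving $\rd_{s} - \lap$ onto the test function yields \eqref{eq:heatIneq4abs}. The only subtlety I foresee is identifying the correct sign of the nonlinear gradient contribution to $\lap f_{\eps}$; once that is handled by the Cauchy--Schwarz above, the rest parallels the classical Bochner--Kato computation and is routine.
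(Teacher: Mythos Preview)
Your proposal is correct and follows essentially the same approach as the paper: regularize by $f_{\eps} = \sqrt{(\sgm,\sgm)+\eps}$, compute $\rd_{s} f_{\eps}$ and $\lap f_{\eps}$ using the covariant Leibniz rule, drop the nonnegative gradient contribution via Cauchy--Schwarz, use \eqref{eq:covHeat} to cancel the $(\sgm, \covD_{s}\sgm)/f_{\eps}$ terms, and pass to the distributional limit. The paper's proof is identical in structure, only presenting the Cauchy--Schwarz step implicitly rather than spelling it out.
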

\begin{proof} 
This lemma was essentially proved in \cite{Tao:2008wn}; we shall give a proof nevertheless for completeness. Let $\eps > 0$. We compute
\begin{align*}
	&\rd_{s} \sqrt{(\sgm, \sgm) + \eps} = \frac{1}{\sqrt{(\sgm, \sgm) + \eps}} (\sgm, \covD_{s} \sgm), \\
	&\lap \sqrt{(\sgm, \sgm) + \eps} = \frac{1}{\sqrt{(\sgm, \sgm) + \eps}} \bb( (\sgm, \covD^{\ell} \covD_{\ell} \sgm) + (\covD^{\ell} \sgm , \covD_{\ell} \sgm) - \frac{(\sgm, \covD^{\ell} \sgm) (\sgm, \covD_{\ell} \sgm)}{(\sgm, \sgm)+\eps} \bb).
\end{align*}

Therefore,
\begin{align*}
	(\rd_{s} - \lap) \sqrt{(\sgm, \sgm) + \eps} 
	= &\frac{1}{\sqrt{(\sgm, \sgm)+\eps}} \bb( (\sgm, \calN) - (\covD^{\ell} \sgm , \covD_{\ell} \sgm) + \frac{(\sgm, \covD^{\ell} \sgm) (\sgm, \covD_{\ell} \sgm)}{(\sgm, \sgm)+\eps} \bb) \\
	\leq & \frac{1}{\sqrt{(\sgm, \sgm) + \eps}} (\sgm, \calN) \leq \abs{\calN}.
\end{align*}
Testing against a non-negative test function and taking $\eps \to 0$, we obtain \eqref{eq:heatIneq4abs}. \qedhere
\end{proof}

The virtue of \eqref{eq:heatIneq4abs} is that it allows us to use estimates arising from the (standard) heat kernel. Before we continue, let us briefly recap the definition and basic properties of the heat kernel. 

Let $e^{s\lap}$ denote the solution operator for the free heat equation. It is an integral operator, defined by
\begin{equation*}
	e^{s \lap} \psi_0(x) = \frac{1}{\sqrt{4 \pi s}^3} \int e^{- \abs{x - y}^2/{4 s}} \psi_0 (y) \, \ud y.
\end{equation*}

The kernel on the right hand side is called the \emph{heat kernel} on $\bbR^3$. Using Young's inequality, it is easy to derive the following basic inequality for the heat kernel:
\begin{equation} \label{eq:est4heatkernel}
	\nrm{e^{s \lap} \psi_0 }_{L^r_x} \leq C_{p,r} \, s^{-3/(2p) + 3/(2r)} \nrm{\psi_0 }_{L^p_x},
\end{equation}
where $1 \leq p \leq r$.

Now consider the initial value problem for the inhomogeneous heat equation $(\rd_{s} - \lap) \psi = N$. Duhamel's principle tells us that this problem can be equivalently formulated in an integral form as follows:
\begin{equation*}
	\psi(s) = e^{s \lap} \psi(s=0) + \int_{0}^{s} e^{(s - \sbr)\lap} N(\sbr) \, \ud \sbr.
\end{equation*}

With these prerequisites, we are ready to derive a simple comparison principle for $\abs{\sgm}$, along with a simple weak maximum principle; both statements are easily proved using basic properties of the heat kernel.

\begin{corollary} \label{cor:heatIneq4Duhamel}
Let $\overline{\sgm} := \sgm(s=0)$. Then the following point-wise inequality holds.
\begin{equation} \label{eq:heatIneq4Duhamel:1}
	\abs{\sgm}(x, s) \leq e^{s \lap} \abs{\overline{\sgm}}(x) + \int_{0}^{s} e^{(s-\sbr) \lap} \abs{\calN(\sbr)}(x) \, \ud \sbr,
\end{equation}

As a consequence, the following \emph{weak maximum principle} holds.
\begin{equation} \label{eq:heatIneq4Duhamel:2}
	\sup_{0 \leq \sbr \leq s}\nrm{\sgm(\sbr)}_{L^{\infty}_{x}} 
	\leq \nrm{\sgm(s=0)}_{L^{\infty}_{x}} + \int_{0}^{s} \nrm{\calN(\sbr)}_{L^{\infty}_{x}} \, \ud \sbr.
\end{equation}
\end{corollary}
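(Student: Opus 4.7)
The plan is to bootstrap from the Bochner–Weitzenböck-type inequality of Lemma \ref{lem:heatIneq4abs} using Duhamel's principle and the positivity of the heat kernel, and then to obtain the weak maximum principle as an immediate consequence.

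First I would revisit the proof of Lemma \ref{lem:heatIneq4abs} and note that what was actually established there is the \emph{classical} pointwise inequality
$(\rd_{s} - \lap) \sqrt{(\sgm,\sgm)+\eps} \leq \abs{\calN}$
for every $\eps > 0$, valid for smooth $\sgm$. The virtue of working with the regularized quantity $u_{\eps} := \sqrt{(\sgm,\sgm)+\eps}$ (which is strictly positive and smooth) rather than with $\abs{\sgm}$ directly is that it avoids any distributional subtlety at the zero set of $\sgm$ and makes the following comparison argument transparent.

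Next I would introduce the Duhamel candidate
\[
v_{\eps}(x,s) := e^{s\lap} u_{\eps}(\cdot,0)(x) + \int_{0}^{s} e^{(s-\sbr)\lap} \abs{\calN(\sbr)}(x) \, \ud \sbr,
\]
which, by Duhamel's formula, solves $(\rd_{s} - \lap)v_{\eps} = \abs{\calN}$ with $v_{\eps}(\cdot,0) = u_{\eps}(\cdot,0)$. Then $w_{\eps} := u_{\eps} - v_{\eps}$ is a classical subsolution of the heat equation with zero initial data, $(\rd_{s}-\lap)w_{\eps} \leq 0$ and $w_{\eps}(\cdot,0)=0$, so by the standard weak maximum principle on $\bbR^{3}$ (applicable thanks to the implicit regularity/decay on $\sgm$ and $\calN$, which control the growth of $w_{\eps}$ at spatial infinity) we obtain $u_{\eps} \leq v_{\eps}$ pointwise. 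Now letting $\eps \to 0$, the left-hand side decreases monotonically to $\abs{\sgm}$, while on the right-hand side the positivity of the heat kernel together with dominated convergence gives $e^{s\lap}\sqrt{(\overline{\sgm},\overline{\sgm})+\eps} \to e^{s\lap}\abs{\overline{\sgm}}$; this yields \eqref{eq:heatIneq4Duhamel:1}.

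For \eqref{eq:heatIneq4Duhamel:2}, I would simply take the $L^{\infty}_{x}$ norm of \eqref{eq:heatIneq4Duhamel:1} and invoke the endpoint case $p=r=\infty$ of the heat kernel bound \eqref{eq:est4heatkernel}, which gives that $e^{s\lap}$ is an $L^{\infty}_{x}$ contraction; the integral term is controlled analogously by Minkowski. Taking the supremum over $\sbr \in [0,s]$ on the left then produces the weak maximum principle. The only genuinely delicate step is the appeal to the maximum principle on the unbounded domain $\bbR^{3}$; this is a standard result once one verifies that $w_{\eps}$ grows at most subexponentially in $\abs{x}$, which is guaranteed by the smoothness of $\sgm$ and adequate behavior of $\calN$ assumed in applications (consistent with the informal ``sufficient niceness'' hypothesis already used in Lemma \ref{lem:pEst4covHeat}).
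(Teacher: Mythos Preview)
Your proposal is correct and follows essentially the same approach as the paper, which simply invokes Lemma \ref{lem:heatIneq4abs}, Duhamel's principle, and the positivity of the heat kernel (your maximum-principle step is equivalent to this positivity), then takes $L^{\infty}_{x}$ for \eqref{eq:heatIneq4Duhamel:2}. Your use of the regularized quantity $u_{\eps}$ from the proof of Lemma \ref{lem:heatIneq4abs} is a careful way of sidestepping the distributional subtleties that the paper's terse argument glosses over, but the underlying idea is the same.
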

\begin{proof} 
The first inequality is an immediate consequence of \eqref{eq:heatIneq4abs}, Duhamel's principle, and the fact that the heat kernel $K(x,y) = \frac{1}{(4 \pi s)^{3/2} }e^{-\abs{x-y}^{2}/ 4 s}$ is everywhere positive. The second one follows by taking the $L^{\infty}_{x}$ norm of \eqref{eq:heatIneq4Duhamel:1} and using \eqref{eq:est4heatkernel}. \qedhere
\end{proof}

For later use, we need the following lemma for the Duhamel integral, whose proof utilizes the basic inequality \eqref{eq:est4heatkernel} for the heat kernel. 
\begin{lemma} \label{lem:est4Duhamel}
The following estimate holds.
\begin{equation} \label{eq:est4Duhamel:1}
	\nrm{\int_{0}^{s} e^{(s-\sbr)\lap} \calN(\sbr) \, \ud \sbr}_{\calL^{1,2}_{s} \calL^{2}_{x}(0,s_{0}]} \leq C \nrm{\calN}_{\calL^{1+1,2}_{s} \calL^{1}_{x}(0,s_{0}]}.
\end{equation}
\end{lemma}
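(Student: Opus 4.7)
The plan is to reduce \eqref{eq:est4Duhamel:1} to a scalar weighted convolution inequality in the heat parameter $s$, and then prove the scalar inequality via a weighted Schur test.

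First I would unpack the p-normalized norms using $\nrm{\cdot}_{\calL^q_x(s)} = s^{-3/(2q)}\nrm{\cdot}_{L^q_x}$, which rewrites the claim as the equivalent standard-norm estimate
\begin{equation*}
\int_0^{s_0} s^{-1/2} \Big\|\int_0^s e^{(s-\sbr)\lap}\calN(\sbr)\,\ud\sbr\Big\|_{L^2_x}^2 \,\ud s \leq C \int_0^{s_0} \nrm{\calN(s)}_{L^1_x}^2 \,\ud s.
\end{equation*}
By Minkowski's inequality (to bring $L^2_x$ inside the $\sbr$-integral) together with the heat-kernel bound \eqref{eq:est4heatkernel} with $p=1$, $r=2$, the left-hand side is controlled by
\begin{equation*}
C \int_0^{s_0} s^{-1/2} \Big(\int_0^s (s-\sbr)^{-3/4} F(\sbr)\,\ud\sbr\Big)^2 \,\ud s, \qquad F(\sbr) := \nrm{\calN(\sbr)}_{L^1_x}.
\end{equation*}
Thus it suffices to establish $\nrm{TF}_{L^2(0,s_0)} \leq C \nrm{F}_{L^2(0,s_0)}$ for the integral operator with kernel $K(s,\sbr) = s^{-1/4}(s-\sbr)^{-3/4}\mathbf{1}_{\{0 < \sbr < s\}}$, namely $TF(s) := s^{-1/4}\int_0^s (s-\sbr)^{-3/4}F(\sbr)\,\ud\sbr$.

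A direct change of variables shows that $T$ commutes with dilations, so it suffices to prove boundedness on $L^2(0,\infty)$, which in turn yields the bound on any $(0,s_0)$ with a constant independent of $s_0$. I would run the weighted Schur test with weight $w(s) = s^{-1/2}$: using the substitution $\sbr = s u$,
\begin{equation*}
\int_0^\infty K(s,\sbr) w(\sbr)\,\ud\sbr = B(1/2,1/4)\,w(s),
\end{equation*}
and using $s = \sbr v$,
\begin{equation*}
\int_0^\infty K(s,\sbr) w(s)\,\ud s = w(\sbr)\int_1^\infty v^{-3/4}(v-1)^{-3/4}\,\ud v,
\end{equation*}
both with finite absolute constants. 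Schur's test then yields $\nrm{T}_{L^2\to L^2} < \infty$, completing the argument.

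The main subtlety is the choice of weight. The unweighted Schur test fails, because $\int_\sbr^\infty s^{-1/4}(s-\sbr)^{-3/4}\,\ud s$ diverges logarithmically at $\infty$; a direct Cauchy--Schwarz applied to $K(s,\sbr) F(\sbr)$ inside the $\sbr$-integral similarly produces a divergent tail after Fubini. The weight $w(s) = s^{-1/2}$ is forced by the scaling of the target inequality and is essentially the unique power weight for which \emph{both} weighted Schur integrals converge: it shifts just enough extra decay to the large-$s$ tail to tame the barely-divergent tail of $K$, while remaining integrable near the origin and compatible with the diagonal singularity $(s-\sbr)^{-3/4}$.
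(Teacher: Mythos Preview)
Your proof is correct and follows essentially the same route as the paper's. The only cosmetic difference is bookkeeping: the paper absorbs a factor of $\sbr^{1/2}$ into the input by setting $f(\sbr)=\sbr^{1/2}\nrm{\calN(\sbr)}_{L^{1}_{x}}$ and works in $L^{2}$ with respect to the logarithmic measure $\ud s/s$, applying the unweighted Schur test to the kernel $s^{1/4}(s-\sbr)^{-3/4}\sbr^{1/2}$; you keep Lebesgue measure and instead run the weighted Schur test with $w(s)=s^{-1/2}$ on the kernel $s^{-1/4}(s-\sbr)^{-3/4}$. Unwinding either formulation gives exactly the same two scalar integrals to check, so the arguments coincide.
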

\begin{proof} 
Unwinding the definitions of p-normalized norms, \eqref{eq:est4Duhamel:1} is equivalent to
\begin{equation} \label{eq:est4Duhamel:pf:1}
	\bb( \int_{0}^{s_{0}} s^{1/2} \nrm{\int_{0}^{s} e^{(s-\sbr)\lap} \calN(\sbr) \, \ud \sbr}_{L^{2}_{x}}^{2} \, \frac{\ud s}{s} \bb)^{1/2} \leq C \bb( \int_{0}^{s_{0}} s \nrm{\calN(s)}_{L^{1}_{x}}^{2} \, \frac{\ud s}{s} \bb)^{1/2}.
\end{equation} 

Let us put $f(s) = s^{1/2} \nrm{\calN(s)}_{L^{1}_{x}}$; then it suffices to estimate the left-hand side of \eqref{eq:est4Duhamel:pf:1} by $C \nrm{f}_{\calL^{2}_{s}(0, s_{0}]}$. By Minkowski and \eqref{eq:est4heatkernel}, we have
\begin{equation*}
	\nrm{\int_{0}^{s} e^{(s-\sbr)\lap} \calN(\sbr) \, \ud \sbr}_{L^{2}_{x}} \leq C \int_{0}^{s} (s-\sbr)^{-3/4} (\sbr)^{1/2}  f(\sbr) \, \frac{\ud \sbr}{\sbr} 
\end{equation*}

Therefore the left-hand side of \eqref{eq:est4Duhamel:pf:1} is bounded from above by
\begin{equation} \label{eq:est4Duhamel:pf:2}
C \bb( \int_{0}^{s_{0}}  \bb(\int_{0}^{s} s^{1/4} (s-\sbr)^{-3/4} (\sbr)^{1/2} f(\sbr) \, \frac{\ud \sbr}{\sbr} \bb)^{2} \, \frac{\ud s}{s}\bb)^{1/2}.
\end{equation}

Observe that
\begin{equation*}
\sup_{s \in (0, s_{0}]} \int_{0}^{s} s^{1/4} (s-\sbr)^{-3/4} (\sbr)^{1/2} \, \frac{\ud \sbr}{\sbr} \leq C, \quad \sup_{\sbr \in (0, s_{0}]} \int_{\sbr}^{s_{0}} s^{1/4} (s-\sbr)^{-3/4} (\sbr)^{1/2} \, \frac{\ud s}{s} \leq C.
\end{equation*}

Therefore, by Schur's test, \eqref{eq:est4Duhamel:pf:2} is estimated by $\nrm{f(s)}_{\calL^{2}_{s}(0, s_{0}]}$ as desired. \qedhere
\end{proof}

Finally, we end this section with a simple lemma which is useful for substituting covariant derivatives by usual derivatives and vice versa.

\begin{lemma} \label{lem:dSub}
For $k \geq 1$, and $\alp$ be a multi-index of order $k$. Then the following schematic algebraic identities hold.
\begin{align}
	\covD^{(\alp)}_{x} \sgm 
	=& \rd_{x}^{(\alp)} \sgm + \sum_{\star} \calO_{\alp} (\rd_{x}^{(\ell_{1})} A, \rd_{x}^{(\ell_{2})} A, \cdots, \rd_{x}^{(\ell_{j})} A, \rd_{x}^{(\ell)} \sgm), \label{eq:dSub:cov2u} \\
	\rd_{x}^{(\alp)} \sgm 
	=& \covD^{(\alp)}_{x} \sgm + \sum_{\star} \calO_{\alp} (\rd_{x}^{(\ell_{1})} A, \rd_{x}^{(\ell_{2})} A, \cdots, \rd_{x}^{(\ell_{j})} A, \covD_{x}^{(\ell)} \sgm).\label{eq:dSub:u2cov}
\end{align}

In both cases, the summation is over all $1 \leq j \leq k$ and $0 \leq \ell_{1}, \ldots, \ell_{j} ,\ell \leq k-1$ such that
\begin{equation*}
	j + \ell_{1} + \cdots \ell_{j} + \ell = k.
\end{equation*}
\end{lemma}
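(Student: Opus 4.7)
\bigskip

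\noindent\textbf{Proof plan.} The plan is to prove both identities simultaneously by induction on $k$, using repeatedly the basic relation $\covD_{i} \tau = \rd_{i} \tau + \LieBr{A_{i}}{\tau}$ (and its equivalent form $\rd_{i} \tau = \covD_{i} \tau - \LieBr{A_{i}}{\tau}$), together with the fact that a commutator $\LieBr{A_{i}}{\tau}$ is a schematic expression of the form $\calO(A, \tau)$. The schematic notation $\calO_{\alp}$ absorbs all signs, constants, index contractions, and combinatorial factors arising from the Leibniz rule, so the content of the lemma is purely a bookkeeping statement about which derivatives can appear and how many factors of $A$.

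\medskip

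For the base case $k = 1$, the identity $\covD_{i} \sgm = \rd_{i} \sgm + \LieBr{A_{i}}{\sgm} = \rd_{i} \sgm + \calO(A, \sgm)$ is exactly \eqref{eq:dSub:cov2u} with the single summand corresponding to $j = 1$, $\ell_{1} = 0$, $\ell = 0$ (note $j + \ell_{1} + \ell = 1 = k$). For the inductive step, assume the identities hold for all multi-indices of order $k - 1$. Given $\alp$ of order $k$, write $\covD_{x}^{(\alp)} \sgm = \covD_{i} \covD_{x}^{(\alp')} \sgm$ for some $i$ and some $\alp'$ of order $k-1$. Apply the inductive hypothesis to the inner expression, then expand $\covD_{i} = \rd_{i} + \LieBr{A_{i}}{\cdot}$ on the result. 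The term $\rd_{i} \rd_{x}^{(\alp')} \sgm$ gives $\rd_{x}^{(\alp)} \sgm$. The term $\LieBr{A_{i}}{\rd_{x}^{(\alp')} \sgm} = \calO(A, \rd_{x}^{(k-1)} \sgm)$ is a new schematic term with $j = 1$, $\ell_{1} = 0$, $\ell = k-1$, satisfying $j + \ell_{1} + \ell = k$. For each error term $\calO_{\alp'}(\rd_{x}^{(\ell_{1})} A, \dots, \rd_{x}^{(\ell_{j})} A, \rd_{x}^{(\ell)} \sgm)$ already present from the inductive hypothesis (which satisfies $j + \ell_{1} + \cdots + \ell_{j} + \ell = k-1$), applying $\rd_{i}$ via Leibniz raises exactly one of the derivative orders by $1$, whereas applying $\LieBr{A_{i}}{\cdot}$ prepends a new factor of $A$ with $\ell_{j+1} = 0$ and increments $j$. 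In either case the new sum $j + \ell_{1} + \cdots + \ell = k$ is preserved.

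\medskip

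The order constraints $\ell_{1}, \dots, \ell_{j}, \ell \leq k - 1$ are automatic from the constraint $j + \ell_{1} + \cdots + \ell_{j} + \ell = k$ together with $j \geq 1$: indeed any single $\ell_{i}$ (or $\ell$) is bounded by $k - j \leq k - 1$. Thus \eqref{eq:dSub:cov2u} follows. The identity \eqref{eq:dSub:u2cov} is proved by the identical induction, now writing $\rd_{x}^{(\alp)} \sgm = \rd_{i} \rd_{x}^{(\alp')} \sgm$ and expanding the outer derivative via $\rd_{i} = \covD_{i} - \LieBr{A_{i}}{\cdot}$; schematically $\calO(A, \cdot) = -\LieBr{A}{\cdot} = \LieBr{\cdot}{A}$ is handled the same way, and the ordinary derivatives $\rd_{x}^{(\ell_{i})} A$ on the $A$-factors are \emph{not} promoted to covariant ones (which matches the statement). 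There is no real obstacle in this proof; the only point requiring mild care is verifying that the Leibniz expansion of the error terms, combined with the single new term generated by the commutator, precisely reproduces the index conditions $j + \ell_{1} + \cdots + \ell_{j} + \ell = k$ with all $\ell$'s at most $k - 1$.
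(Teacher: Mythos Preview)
Your proof is correct and follows essentially the same approach as the paper: the base case from the identity $\covD_{i}\sgm = \rd_{i}\sgm + \LieBr{A_{i}}{\sgm}$, then induction via Leibniz's rule. The paper's own proof is in fact a one-line sketch (``the cases of higher $k$ follow from a simple induction argument, using Leibniz's rule''), so your write-up simply fills in the details the paper omits.
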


\begin{proof} 
In the case $k=1$, both \eqref{eq:dSub:cov2u} and \eqref{eq:dSub:u2cov} follow from the simple identity 
\begin{equation*}
	\covD_{i} \sgm = \rd_{i} \sgm + \LieBr{A_{i}}{\sgm}.
\end{equation*}

The cases of higher $k$ follow from a simple induction argument, using Leibniz's rule. We omit the proof. \qedhere
\end{proof}


\section{Analysis of covariant parabolic equations} \label{sec:covParabolic}
\subsection{Covariant parabolic equations of \eqref{eq:dYMHF}}
Let $I \subset \bbR$ be an interval, and consider a smooth solution $A_{\bfa}$ to the \emph{dynamic Yang-Mills heat flow}
\begin{equation*}  \tag{dYMHF}
	F_{s \mu} = \covD^{\ell} F_{\ell \mu}, \quad \mu = 0, 1, 2, 3,
\end{equation*}
on $I \times \bbR^{3} \times [0,1]$. Note that these equations are a part of \eqref{eq:HPYM}; one could say that these are parabolic equations of the Hyperbolic-\emph{Parabolic}-Yang-Mills system.

Let us first derive the parabolic equation satisfied by $F_{\mu \nu}$, namely,
\begin{equation} \label{eq:covParabolic4F}
	\covD_{s} F_{\mu \nu} - \covD^{\ell} \covD_{\ell} F_{\mu \nu} = - 2 \LieBr{\tensor{F}{_{\mu}^{\ell}}}{F_{\nu \ell}}.
\end{equation}

We start with the \emph{Bianchi identity} 
\begin{equation} \label{eq:fullBianchi}
	\covD_{\bfa} F_{\bfb \bfc} + \covD_{\bfb} F_{\bfc \bfa} + \covD_{\bfc} F_{\bfa \bfb} = 0, 
\end{equation}
which easily follows from the formula $F_{\bfa \bfb} = \rd_{\bfa} A_{\bfb} - \rd_{\bfb} A_{\bfa} + \LieBr{A_{\bfa}}{A_{\bfb}}$. Taking the case $\bfa = s$, $\bfb = \mu$ and $\bfc = \nu$, we arrive at the identity
\begin{equation*}
	\covD_{s} F_{\mu \nu} = \covD_{\mu} F_{s \nu} - \covD_{\nu} F_{s \mu}.
\end{equation*}

Since we are considering a solution to \eqref{eq:dYMHF}, the right-hand side is equal to $\covD_{\mu} \covD^{\ell} F_{\ell \nu} - \covD_{\nu} \covD^{\ell} F_{\ell \mu}$. Commuting the covariant derivatives and applying 
\begin{equation*}
	\covD_{\mu} F_{\ell \nu} - \covD_{\nu} F_{\ell \mu} = \covD_{\ell} F_{\mu \nu},
\end{equation*}
which is again a consequence of the Bianchi identity, we arrive at \eqref{eq:covParabolic4F}.

Next, let us derive covariant parabolic equations satisfied by higher \emph{covariant} derivatives of $\bfF$. Given a $\LieAlg$-valued tensor $B$, we compute
\begin{align*}
	\covD_{i} \covD_{s} B - \covD_{i} \covD^{\ell} \covD_{\ell} B
	= & \covD_{s} \covD_{i} B - \covD^{\ell} \covD_{\ell} \covD_{i} B - 2 \LieBr{\tensor{F}{_{i}^{\ell}}}{\covD_{\ell} B}.
\end{align*}

Concisely, $\LieBr{\covD_{i}}{\covD_{s} - \covD^{\ell} \covD_{\ell}} B = \calO(F, \covD_{x} B)$. Using this, it is not difficult to prove the following proposition.

\begin{proposition} [Covariant parabolic equations of \eqref{eq:dYMHF}] \label{prop:covParabolic}
Let $A_{\bfa}$ be a solution to \eqref{eq:dYMHF}. Then the curvature $2$-form $F_{\mu \nu}$ satisfies the following parabolic equation.
\begin{equation} \label{eq:BWeq:1}
	(\covD_{s} - \covD^{\ell} \covD_{\ell}) F_{\mu \nu} = - 2 \LieBr{\tensor{F}{_{\mu}^{\ell}}}{F_{\nu \ell}}.
\end{equation}

The covariant derivatives of $F_{\mu \nu}$ satisfy the following schematic equation.
\begin{equation} \label{eq:BWeq:2}
	(\covD_{s} - \covD^{\ell} \covD_{\ell}) (\covD_{x}^{(k)} \bfF) = \sum_{j =0}^{k} \calO(\covD^{(j)}_{x} \bfF, \covD^{(k-j)}_{x} \bfF).
\end{equation}
\end{proposition}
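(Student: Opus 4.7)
The plan is to establish \eqref{eq:BWeq:1} by assembling the identities already laid out in the paragraphs preceding the proposition, and then to derive \eqref{eq:BWeq:2} by induction on $k$ using the commutator relation $\LieBr{\covD_{i}}{\covD_{s} - \covD^{\ell} \covD_{\ell}} B = \calO(F, \covD_{x} B)$ noted just above.

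For \eqref{eq:BWeq:1}, I would organize the computation as follows. Starting from the Bianchi identity \eqref{eq:fullBianchi} with $(\bfa, \bfb, \bfc) = (s, \mu, \nu)$, I obtain $\covD_{s} F_{\mu\nu} = \covD_{\mu} F_{s\nu} - \covD_{\nu} F_{s\mu}$. Substituting \eqref{eq:dYMHF} on the right gives $\covD_{s} F_{\mu\nu} = \covD_{\mu} \covD^{\ell} F_{\ell\nu} - \covD_{\nu} \covD^{\ell} F_{\ell\mu}$. Commuting the outer covariant derivative past $\covD^{\ell}$ in each term produces a Laplacian contribution $\covD^{\ell} \covD_{\mu} F_{\ell\nu} - \covD^{\ell} \covD_{\nu} F_{\ell\mu}$ together with two curvature commutators of the form $\LieBr{\tensor{F}{_{\mu}^{\ell}}}{F_{\ell\nu}}$ and $-\LieBr{\tensor{F}{_{\nu}^{\ell}}}{F_{\ell\mu}}$. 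Applying Bianchi once more in the form $\covD_{\mu} F_{\ell\nu} - \covD_{\nu} F_{\ell\mu} = \covD_{\ell} F_{\mu\nu}$ collapses the first group into $\covD^{\ell} \covD_{\ell} F_{\mu\nu}$. The curvature commutators combine (using antisymmetry of $F_{\mu\ell}$ in $\mu, \ell$) into $-2 \LieBr{\tensor{F}{_{\mu}^{\ell}}}{F_{\nu\ell}}$, yielding \eqref{eq:BWeq:1}.

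For \eqref{eq:BWeq:2}, I would induct on $k$, with base case $k=0$ furnished by \eqref{eq:BWeq:1} (whose right-hand side is of the schematic form $\calO(\bfF, \bfF)$). Assuming the claim for $k-1$, I apply $\covD_{i}$ to both sides of
\begin{equation*}
(\covD_{s} - \covD^{\ell} \covD_{\ell})(\covD_{x}^{(k-1)} \bfF) = \sum_{j=0}^{k-1} \calO(\covD_{x}^{(j)} \bfF, \covD_{x}^{(k-1-j)} \bfF)
\end{equation*}
and rearrange using the commutator identity to obtain
\begin{equation*}
(\covD_{s} - \covD^{\ell} \covD_{\ell})(\covD_{x}^{(k)} \bfF) = \covD_{i}\Bigl(\sum_{j=0}^{k-1} \calO(\covD_{x}^{(j)} \bfF, \covD_{x}^{(k-1-j)} \bfF)\Bigr) + \calO(F, \covD_{x}^{(k)} \bfF).
\end{equation*}
Expanding the first term on the right via Leibniz produces a sum of terms of the form $\calO(\covD_{x}^{(j')} \bfF, \covD_{x}^{(k-j')} \bfF)$, which together with the commutator term $\calO(\covD_{x}^{(0)} \bfF, \covD_{x}^{(k)} \bfF)$ exactly matches the schematic right-hand side of \eqref{eq:BWeq:2}.

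There is no serious analytical obstacle here; the proposition is at heart a bookkeeping exercise, and the main point of care is verifying the stated commutator identity $\LieBr{\covD_{i}}{\covD_{s} - \covD^{\ell} \covD_{\ell}} B = \calO(F, \covD_{x} B)$, which itself follows from $\LieBr{\covD_{i}}{\covD_{s}} B = \LieBr{F_{is}}{B}$ (a term of schematic type $\calO(F, B)$ that is absorbed into $\calO(F, \covD_{x} B)$ after one further application of Bianchi, since $F_{is} = \covD^{\ell} F_{\ell i}$) together with $\LieBr{\covD_{i}}{\covD^{\ell} \covD_{\ell}} B$, whose expansion by the curvature commutator formula yields expressions of the form $\calO(\covD_{x} F, B) + \calO(F, \covD_{x} B)$, again absorbable via Bianchi into $\calO(F, \covD_{x} B)$.
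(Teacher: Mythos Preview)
Your proof is correct and follows the same approach as the paper: \eqref{eq:BWeq:1} is derived via Bianchi plus \eqref{eq:dYMHF} exactly as you describe, and \eqref{eq:BWeq:2} follows by induction using the commutator formula.

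One remark on your final paragraph, though. Your verification of the commutator identity $\LieBr{\covD_{i}}{\covD_{s} - \covD^{\ell}\covD_{\ell}} B = \calO(F,\covD_{x} B)$ is slightly muddled. The term $\LieBr{F_{is}}{B}$ is not of schematic type $\calO(\bfF,B)$ (the index $s$ is not a space-time index), and it is not ``absorbed into $\calO(F,\covD_{x} B)$ via Bianchi'' as you suggest; Bianchi does not move derivatives from $F$ onto $B$. What actually happens is a cancellation: since $F_{is} = -\covD^{\ell} F_{\ell i}$ by \eqref{eq:dYMHF}, the term $\LieBr{F_{is}}{B}$ from $\LieBr{\covD_{i}}{\covD_{s}}$ exactly cancels the $\LieBr{\covD^{\ell} F_{i\ell}}{B}$ term arising from $\LieBr{\covD_{i}}{\covD^{\ell}\covD_{\ell}}$, leaving only $-2\LieBr{\tensor{F}{_{i}^{\ell}}}{\covD_{\ell} B}$ (this is the computation displayed just before the proposition in the paper). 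That said, this imprecision is harmless for \eqref{eq:BWeq:2}: even without noticing the cancellation, the residual $\calO(\covD_{x} F, B)$-type terms, with $B = \covD_{x}^{(k-1)}\bfF$, are already of the allowed form $\calO(\covD_{x}^{(1)}\bfF, \covD_{x}^{(k-1)}\bfF)$, so your induction still closes.
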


Proceeding in the same manner for the non-temporal components $A_{a}$ ($a=x^{1}, x^{2}, x^{3}, s$), which solve the system $F_{si} = \covD^{\ell} F_{\ell i}$ on their own, we may derive the following equation for $\covD_{x}^{(k)} F_{ij}$:
\begin{equation} \label{eq:BWeq:3}
	(\covD_{s} - \covD^{\ell} \covD_{\ell}) (\covD_{x}^{(k)} F) = \sum_{j =0}^{k} \calO(\covD^{(j)}_{x} F, \covD^{(k-j)}_{x} F).
\end{equation}

\subsection{Estimates for the covariant parabolic equations}
Let us fix a time $t \in I$. Let us recall the definition \eqref{eq:YMenergy} of the Yang-Mills energy of $\bfF(t)$ at $s=0$:
\begin{equation*}
	\bfE(t) = \bfE[\bfF(t, s=0)] = \sum_{\mu <\nu} \frac{1}{2} \nrm{F_{\mu \nu}(t, s=0)}_{L^{2}_{x}}^{2}.
\end{equation*}

Recall that $\calD_{i} := s^{1/2} \covD_{i}$. The following proposition, which is proved by applying covariant techniques to \eqref{eq:BWeq:2}, is the analytic heart of this paper.

\begin{proposition}[Covariant parabolic estimates for $\bfF$] \label{prop:pEst4covF}
Let $I \subset \bbR$ be an interval, and $t \in I$. Suppose that $A_{\bfa}$ is a solution to \eqref{eq:dYMHF} on $I \times \bbR^{3} \times [0,1]$ such that $A_{\bfa} \in C^{\infty}_{t,s}(I \times [0,1], H^{\infty}_{x})$. There exists $\dlt > 0 $ such that if $\bfE(t) < \dlt$, then the following estimate holds for each integer $k \geq 1$.
\begin{equation} \label{eq:pEst4covF:1}
	\nrm{\calD_{x}^{(k-1)} \bfF(t)}_{\calL^{3/4, \infty}_{s} \calL^{2}_{x}(0,1]} + \nrm{\calD_{x}^{(k)} \bfF(t)}_{\calL^{3/4, 2}_{s} \calL^{2}_{x}(0,1]} \leq C_{k, \bfE(t)} \cdot \sqrt{\bfE(t)}.
\end{equation}
\end{proposition}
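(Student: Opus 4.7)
I propose to prove \eqref{eq:pEst4covF:1} by induction on $k$ via Lemma~\ref{lem:pEst4covHeat} applied to $\sgm_k := \covD_x^{(k-1)} \bfF$, which satisfies the covariant parabolic equation \eqref{eq:BWeq:2}. The crucial point is to use the scale-dictated weight $\ell = (2k+1)/4$, for which (using $\calD_x = s^{1/2} \covD_x$ and $\nrm{\cdot}_{\calL^2_x(s)} = s^{3/4} \nrm{\cdot}_{L^2_x}$) one verifies the norm-matching identities
\[
	\nrm{\sgm_k}_{\calL^{\ell, \infty}_s \calL^2_x} = \nrm{\calD_x^{(k-1)} \bfF}_{\calL^{3/4, \infty}_s \calL^2_x}, \quad \nrm{\calD_x \sgm_k}_{\calL^{\ell, 2}_s \calL^2_x} = \nrm{\calD_x^{(k)} \bfF}_{\calL^{3/4, 2}_s \calL^2_x},
\]
so that the energy inequality delivers precisely the target norms at level $k$. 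Moreover, the middle term $C(\ell - 3/4) \nrm{\sgm_k}_{\calL^{\ell, 2}_s \calL^2_x}$ coincides with $C(k-1)/2 \cdot \nrm{\calD_x^{(k-1)} \bfF}_{\calL^{3/4, 2}_s \calL^2_x}$, a quantity already controlled at the previous step.

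For the base case $k = 1$, Lemma~\ref{lem:pEst4covHeat} alone is inadequate: sending $s_1 \to 0$ makes $s_1^{3/2} \nrm{\bfF(s_1)}_{L^2_x} \to 0$, so the $\sqrt{\bfE(t)}$ datum is lost, and the resulting pure nonlinear inequality $X \lesssim X^2$ cannot deliver the desired linear bound. To recover the $\sqrt{\bfE(t)}$ contribution, I combine the Bochner--Weitzenb\"ock inequality (Lemma~\ref{lem:heatIneq4abs}) for $\bfF$ with Corollary~\ref{cor:heatIneq4Duhamel} and the $L^2$-contractivity of the heat kernel to obtain
\[
	\nrm{\bfF(s)}_{L^2_x} \leq \sqrt{2 \bfE(t)} + C \int_0^s \nrm{\bfF(\sbr)}_{L^2_x}^{1/2} \nrm{\covD_x \bfF(\sbr)}_{L^2_x}^{3/2} \, d\sbr,
\]
the nonlinearity being estimated via H\"older and covariant Gagliardo--Nirenberg (Corollary~\ref{cor:covSob}). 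Coupling this with the differential energy identity for $\nrm{\bfF(s)}_{L^2_x}^2$ (derived by contracting \eqref{eq:covParabolic4F} with $\bfF$ and integrating by parts via bi-invariance), I arrive at a coupled nonlinear system in $A := \sup_{s \in (0, 1]} \nrm{\bfF(s)}_{L^2_x}$ and $B := \bigl(\int_0^1 \nrm{\covD_x \bfF(s)}_{L^2_x}^2 \, ds\bigr)^{1/2}$ of the schematic form $A \leq \sqrt{2\bfE(t)} + C A^{1/2} B^{3/2}$ and $B^2 \leq 2 \bfE(t) + C A^{3/2} B^{3/2}$. A standard bootstrap using $\bfE(t) < \dlt$ gives $A + B \lesssim \sqrt{\bfE(t)}$; since the weights in the target norms are bounded by $1$ on $(0, 1]$, the $k = 1$ case of \eqref{eq:pEst4covF:1} follows.

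For the inductive step $k \geq 2$, applying Lemma~\ref{lem:pEst4covHeat} to $\sgm_k$ at weight $\ell = (2k+1)/4$ and sending $s_1 \to 0$, the initial-data term $s_1^{(k+2)/2} \nrm{\covD_x^{(k-1)} \bfF(s_1)}_{L^2_x}$ vanishes by the regularity of $\bfF$, and the middle term is bounded by $C (k-1)/2 \cdot C_{k-1} \sqrt{\bfE(t)}$ via the $(k-1)$-th inductive hypothesis. The remaining task is to bound $\nrm{\calN_{k-1}}_{\calL^{(2k+5)/4, 1}_s \calL^2_x}$, where $\calN_{k-1} = \sum_{j=0}^{k-1} \calO(\covD_x^{(j)} \bfF, \covD_x^{(k-1-j)} \bfF)$. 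For each summand, I apply H\"older in $x$ (splitting $L^2_x = L^p_x \cdot L^q_x$), covariant Sobolev (Corollary~\ref{cor:covSob}) to convert $L^6_x$ and $L^\infty_x$ bounds into covariant $L^2_x$-derivatives, then the Correspondence Principle to translate into p-normalized norms, and finally Lemma~\ref{lem:Holder4Ls} for the $s$-integration. The extreme summands ($j = 0$ or $j = k-1$) produce one factor at the target level $k$ (roughly, a piece of $\nrm{\calD_x^{(k)} \bfF}_{\calL^{3/4, 2}_s \calL^2_x}^{1/2}$); this is absorbed into the LHS via Young's inequality, with the smallness required for the absorption supplied by the $\sqrt{\bfE(t)}$ factors coming from lower-level inductive hypotheses. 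The principal obstacle throughout is the criticality of the quadratic nonlinearity with respect to the $\bfE$-scaling: it is precisely this criticality that forces the use of Duhamel-type tools beyond the energy inequality in the base case, and that necessitates $\bfE(t) < \dlt$ for closing both the base-case bootstrap and the inductive absorption step.
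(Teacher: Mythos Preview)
Your overall strategy (energy integral inequality plus induction) is correct and matches the paper's, but your base case is built on a miscomputation. You write $\nrm{\cdot}_{\calL^{2}_{x}(s)} = s^{3/4}\nrm{\cdot}_{L^{2}_{x}}$, but the p-normalization in the paper is $\nrm{\cdot}_{\calL^{2}_{x}(s)} = s^{-3/4}\nrm{\cdot}_{L^{2}_{x}}$ (the $L^{2}_{x}$ norm on $\bbR^{3}$ has degree $2\ell = 3/2$, and one multiplies by $s^{-\ell}$). Your norm-matching identities are nonetheless correct, but the initial-data term in Lemma~\ref{lem:pEst4covHeat} at $\ell = 3/4$ is
\[
	s_{1}^{3/4}\,\nrm{\bfF(s_{1})}_{\calL^{2}_{x}(s_{1})} = s_{1}^{3/4}\cdot s_{1}^{-3/4}\nrm{\bfF(s_{1})}_{L^{2}_{x}} = \nrm{\bfF(s_{1})}_{L^{2}_{x}},
\]
which converges to $\nrm{\bfF(t,s=0)}_{L^{2}_{x}} \leq \sqrt{2\bfE(t)}$ as $s_{1}\to 0$, not to $0$. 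So the $\sqrt{\bfE(t)}$ datum is \emph{not} lost, and Lemma~\ref{lem:pEst4covHeat} alone (combined with the nonlinear estimate you already describe) is entirely adequate for $k=1$; the Bochner--Weitzenb\"ock/Duhamel detour is unnecessary.

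The paper in fact treats $k=1,2$ together: applying the energy inequality at $\ell = 3/4$ for $\bfF$ and $\ell = 3/4 + 1/2$ for $\covD_{x}\bfF$, summing, and estimating the nonlinear terms yields a closed inequality $\calB_{2}(\subr) \leq C\sqrt{\bfE} + C\,\calB_{2}(\subr)^{2}$, which is dispatched by a bootstrap in $\subr$ under $\bfE < \dlt$. Your coupled $(A,B)$ system for $k=1$ does work, but it duplicates this bootstrap with more moving parts. For the induction $k\geq 3$, the paper avoids your absorption step entirely: placing the low-derivative factor in $L^{\infty}_{x}$ (via \eqref{eq:covSob:3}) bounds the extreme summands $\calO(\bfF,\covD_{x}^{(k-1)}\bfF)$ purely by level-$(k-1)$ quantities, so the nonlinearity is $\leq C\,\calB_{k-1}^{2}$ with nothing to absorb. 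Your approach (putting the high-derivative factor in $L^{6}_{x}$, producing a level-$k$ piece, then absorbing) is also valid; it is just less sharp than necessary.
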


\begin{proof} 
Let us start with the cases $k = 1, 2$. Let $\subr \in (0, 1]$. Applying the energy integral estimate \eqref{eq:pEst4covHeat:1} with $\ell = 3/4$ to \eqref{eq:BWeq:1} and and $\ell = 3/4 + 1/2$ to \eqref{eq:BWeq:2} for $\covD_{x} \bfF$, we have
\begin{align*}
	&\nrm{\bfF}_{\calL^{3/4, \infty}_{s} \calL^{2}_{x}(0,\subr]} + \nrm{\calD_{x} \bfF}_{\calL^{3/4, 2}_{s} \calL^{2}_{x}(0,\subr]}
	\leq C \sqrt{\bfE} + C\nrm{\calO(\bfF, \bfF)}_{\calL^{3/4+1, 1}_{s} \calL^{2}_{x}(0, \subr]}, \\
	&\nrm{\calD_{x} \bfF}_{\calL^{3/4, \infty}_{s} \calL^{2}_{x}(0,\subr]} + \nrm{\calD_{x}^{(2)} \bfF}_{\calL^{3/4, 2}_{s} \calL^{2}_{x}(0,\subr]}
	\leq C \nrm{\calD_{x} \bfF}_{\calL^{3/4, 2}_{s} \calL^{2}_{x}(0,\subr]} + C \nrm{\calO(\calD_{x} \bfF,  \bfF )}_{\calL^{3/4+1, 1}_{s} \calL^{2}_{x}(0, \subr]}.
\end{align*}

No term at $s=0$ arises for the second estimate, as we have $\liminf_{s \to 0} s^{3/4} \nrm{\calD_{x} \bfF (s)}_{\calL^{2}_{x}(s)} = 0$ for a regular $\bfF$.

Combining the two inequalities, we obtain
\begin{equation*}
	\calB_{2}(\subr) \leq C \sqrt{\bfE} + C (\nrm{\calO(\bfF, \bfF)}_{\calL^{3/4+1, 1}_{s} \calL^{2}_{x}(0, \subr]} + \nrm{\calO(\calD_{x} \bfF, \bfF)}_{\calL^{3/4+1, 1}_{s} \calL^{2}_{x}(0, \subr]}),
\end{equation*}
where 
\begin{equation*}
\calB_{2}(\subr) := \sum_{k=1,2} (\nrm{\calD_{x}^{(k-1)} \bfF}_{\calL^{3/4, \infty}_{s} \calL^{2}_{x}(0,\subr]} + \nrm{\calD_{x}^{(k)} \bfF}_{\calL^{3/4, 2}_{s} \calL^{2}_{x}(0,\subr]} ).
\end{equation*}

Using H\"older and Corollary \ref{cor:covSob}, we see that
\begin{equation*}
	\nrm{\calO(\sgm_{1}, \sgm_{2})}_{L^{2}_{x}} \leq C \nrm{\sgm_{1}}_{L^{2}_{x}}^{1/2} \nrm{\covD_{x} \sgm_{1}}_{L^{2}_{x}}^{1/2} \nrm{\covD_{x} \sgm_{2}}_{L^{2}_{x}}.
\end{equation*}

By the Correspondence Principle, Lemma \ref{lem:Holder4Ls} and the fact that $\subr \leq 1$, we have
\begin{equation*}
\begin{aligned}
	\nrm{\calO(\bfF, \bfF)}_{\calL^{3/4+1,1}_{s}  \calL^{2}_{x}(0, \subr]} 
	\leq & C \subr^{1/4} \nrm{\bfF}_{\calL^{3/4,\infty}_{s} \calL^{2}_{x}(0, \subr]}^{1/2} \nrm{\calD_{x} \bfF}_{\calL^{3/4,2}_{s} \calL^{2}_{x}(0, \subr]}^{3/2} 
	\leq C \calB_{2}(\subr)^{2}.
\end{aligned}
\end{equation*}

Similarly, we also have
\begin{equation*}
\begin{aligned}
	\nrm{\calO(\calD_{x} \bfF, \bfF)}_{\calL^{3/4+1,1}_{s}  \calL^{2}_{x}(0, \subr]} 
	\leq & C \calB_{2}(\subr)^{2}.
\end{aligned}
\end{equation*}

Therefore, we obtain a bound of the form $\calB_{2}(\subr) \leq C \sqrt{\bfE} + C \calB_{2}(\subr)^{2}$, for every $\subr \in (0,1]$. Then by a simple bootstrap argument, the bound $\calB_{2}(1) \leq C \sqrt{\bfE}$ follows, which implies the desired estimate.

Let us turn to the case $k \geq 3$, which is proved by induction. Fix $k \geq 3$, and suppose, for the purpose of induction, that \eqref{eq:pEst4covF:1} holds for up to $k-1$. That is, defining
\begin{equation*}
	\calB_{k-1} := \sum_{j=1}^{k-1} \bb[ \nrm{\calD_{x}^{(j-1)} \bfF}_{\calL^{3/4,\infty}_{s} \calL^{2}_{x}(0,1]} + \nrm{\calD_{x}^{(j)} \bfF}_{\calL^{3/4,2}_{s} \calL^{2}_{x}(0,1]} \bb],
\end{equation*}
we will assume that $\calB_{k-1} \leq C_{k, \bfE} \cdot \sqrt{\bfE}$.

Applying the energy integral estimate \eqref{eq:pEst4covHeat:1} with $\ell = \frac{3}{4}  + \frac{k-1}{2}$ to \eqref{eq:BWeq:2} for $\covD_{x}^{(k-1)} \bfF$, we obtain
\begin{equation*}
	\nrm{\calD_{x}^{(k-1)} \bfF}_{\calL^{3/4,\infty}_{s} \calL^{2}_{x}} + \nrm{\calD_{x}^{(k)} \bfF}_{\calL^{3/4,2}_{s} \calL^{2}_{x}}
	\leq C\nrm{\calD_{x}^{(k-1)} \bfF}_{\calL^{3/4,2}_{s} \calL^{2}_{x}} + C \sum_{j=0}^{k-1} \nrm{\calO(\calD_{x}^{(j)} \bfF, \calD_{x}^{(k-1-j)} \bfF)}_{\calL^{3/4+1,1}_{s} \calL^{2}_{x}}
\end{equation*}
where we used the fact that $\liminf_{s \to 0} s^{3/4} \nrm{\calD_{x}^{(k-1)} \bfF(s)}_{\calL^{2}(s)} = 0$. 

The first term is bounded by $\calB_{k-1}$; therefore, \eqref{eq:pEst4covF:1} for $k$ will follow once we establish
\begin{equation} \label{eq:pEst4covF:pf:1}
	\sum_{j=0}^{k-1} \nrm{\calO(\calD_{x}^{(j)} \bfF, \calD_{x}^{(k-1-j)} \bfF)}_{\calL^{3/4+1,1}_{s} \calL^{2}_{x}} \leq C \calB_{k-1}^{2}.
\end{equation}

By Leibniz's rule, we see that \eqref{eq:pEst4covF:pf:1} follows once we establish the estimates
\begin{equation} \label{eq:pEst4covF:pf:2}
\left\{
\begin{aligned}
	& \nrm{\calO(\calD_{x} G_{1}, \calD_{x} G_{2})}_{\calL^{3/4+1,1}_{s} \calL^{2}_{x}} \leq C \calB_{2}^{2} \\
	& \nrm{\calO(G_{1}, \calD_{x}^{(2)} G_{2})}_{\calL^{3/4+1,1}_{s} \calL^{2}_{x}} + \nrm{\calO(\calD_{x}^{(2)}G_{1},  G_{2})}_{\calL^{3/4+1,1}_{s} \calL^{2}_{x}}
	\leq C \calB_{2}^{2},
\end{aligned}
\right.
\end{equation}
for any $\LieAlg$-valued 2-forms $G_{i} = G_{i}(x, s)$. Note that these roughly correspond to the case $k=3$ of \eqref{eq:pEst4covF:pf:1}.

Using the Correspondence Principle, Lemma \ref{lem:Holder4Ls}, and recalling the definition of $\calB_{k-1}$, it suffices to prove the estimates
\begin{align*}
	\nrm{\calO(\covD_{x} \sgm_{1}, \covD_{x} \sgm_{2})}_{L^{2}_{x}} 
	\leq & C \nrm{\covD_{x} \sgm_{1}}_{L^{2}_{x}}^{1/2} \nrm{\covD_{x}^{(2)} \sgm_{1}}_{L^{2}_{x}}^{1/2} \nrm{\covD_{x}^{(2)} \sgm_{2}}_{L^{2}_{x}}, \\
	\nrm{\calO(\sgm_{1}, \covD_{x}^{(2)} \sgm_{2})}_{L^{2}_{x}} 
	\leq & C \nrm{\covD_{x} \sgm_{1}}_{L^{2}_{x}}^{1/2} \nrm{\covD_{x}^{(2)} \sgm_{1}}_{L^{2}_{x}}^{1/2} \nrm{\covD_{x}^{(2)} \sgm_{2}}_{L^{2}_{x}}.
\end{align*}

The former is an easy consequence of H\"older, \eqref{eq:covSob:1} and \eqref{eq:covSob:2}, whereas the latter is proved similarly by applying H\"older, \eqref{eq:covSob:2} and \eqref{eq:covSob:3}. \qedhere
\end{proof}

Recalling $F_{s\nu} = \covD^{\ell} F_{\ell \nu}$, we obtain the following estimates for $F_{s\nu}$.

\begin{corollary} \label{cor:pEst4covFs}
Under the same hypotheses as Proposition \ref{prop:pEst4covF}, the following estimates hold for every integer $k \geq 0$.
\begin{align} 
	\nrm{\calD_{x}^{(k)} \bfF_{s}}_{\calL^{5/4, \infty}_{s} \calL^{2}_{x}(0,1]} + \nrm{\calD_{x}^{(k)} \bfF_{s}}_{\calL^{5/4, 2}_{s} \calL^{2}_{x}(0,1]} 
	\leq & C_{k, \bfE} \cdot \sqrt{\bfE}, \label{eq:pEst4covFs:1} \\
	\nrm{\calD_{x}^{(k)} \bfF_{s}}_{\calL^{5/4, \infty}_{s} \calL^{\infty}_{x}(0,1]} + \nrm{\calD_{x}^{(k)} \bfF_{s}}_{\calL^{5/4, 2}_{s} \calL^{\infty}_{x}(0,1]} 
	\leq & C_{k, \bfE} \cdot \sqrt{\bfE}. \label{eq:pEst4covFs:2}
\end{align}
\end{corollary}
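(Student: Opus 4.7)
\textbf{Proof plan for Corollary \ref{cor:pEst4covFs}.}

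The plan is to reduce both estimates to Proposition \ref{prop:pEst4covF} via two observations: (i) the defining identity $F_{s\nu} = \covD^{\ell} F_{\ell\nu}$ from \eqref{eq:dYMHF} lets one express derivatives of $\bfF_{s}$ in terms of one higher derivative of $\bfF$, and (ii) the covariant Gagliardo--Nirenberg inequality \eqref{eq:covSob:3} upgrades $\calL^{2}_{x}$-control to $\calL^{\infty}_{x}$-control at the cost of two additional covariant derivatives.

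First I would dispose of \eqref{eq:pEst4covFs:1}. Since $\calD_{x} = s^{1/2} \covD_{x}$, the equation $F_{s\nu} = \covD^{\ell} F_{\ell\nu}$ gives the schematic relation $\calD_{x}^{(k)} \bfF_{s} = s^{-1/2} \calO\bigl(\calD_{x}^{(k+1)} \bfF\bigr)$. A factor of $s^{-1/2}$ inside a $\calL^{\ell,p}_{s}$ norm shifts its weight $\ell$ by $-1/2$, so
\begin{equation*}
\nrm{\calD_{x}^{(k)} \bfF_{s}}_{\calL^{5/4,p}_{s} \calL^{2}_{x}(0,1]}
\leq C \nrm{\calD_{x}^{(k+1)} \bfF}_{\calL^{3/4,p}_{s} \calL^{2}_{x}(0,1]}
\end{equation*}
for $p \in \set{2, \infty}$. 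Applying Proposition \ref{prop:pEst4covF} with $k$ replaced by $k+1$ (for $p=2$) and by $k+2$ (for $p=\infty$), both norms on the right are bounded by $C_{k,\bfE} \sqrt{\bfE}$, which establishes \eqref{eq:pEst4covFs:1} for all $k \geq 0$.

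Next I would handle \eqref{eq:pEst4covFs:2}. The covariant Sobolev inequality \eqref{eq:covSob:3} together with the Correspondence Principle yields, pointwise in $s$,
\begin{equation*}
\nrm{\calD_{x}^{(k)} \bfF_{s}(s)}_{\calL^{\infty}_{x}(s)}
\leq C \nrm{\calD_{x}^{(k+1)} \bfF_{s}(s)}_{\calL^{2}_{x}(s)}^{1/2} \nrm{\calD_{x}^{(k+2)} \bfF_{s}(s)}_{\calL^{2}_{x}(s)}^{1/2}.
\end{equation*}
Taking the $\calL^{5/4,\infty}_{s}$ norm is immediate, giving the $\calL^{5/4,\infty}_{s}\calL^{\infty}_{x}$ bound. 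For the $\calL^{5/4,2}_{s}\calL^{\infty}_{x}$ bound, I would square, distribute the $s^{5/2}$ weight as $s^{5/4} \cdot s^{5/4}$, and apply Cauchy--Schwarz in $s$ to obtain
\begin{equation*}
\nrm{\calD_{x}^{(k)} \bfF_{s}}_{\calL^{5/4,2}_{s} \calL^{\infty}_{x}}
\leq C \nrm{\calD_{x}^{(k+1)} \bfF_{s}}_{\calL^{5/4,2}_{s} \calL^{2}_{x}}^{1/2} \nrm{\calD_{x}^{(k+2)} \bfF_{s}}_{\calL^{5/4,2}_{s} \calL^{2}_{x}}^{1/2}.
\end{equation*}
In both cases the right-hand side is controlled by $C_{k,\bfE}\sqrt{\bfE}$ via \eqref{eq:pEst4covFs:1} applied with $k+1$ and $k+2$ in place of $k$.

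There is no real obstacle here; the whole content is bookkeeping of $s$-weights under the substitution $\calD_{x} = s^{1/2}\covD_{x}$ and careful use of H\"older/Cauchy--Schwarz in the $s$-variable (Lemma \ref{lem:Holder4Ls}). The only point that demands any attention is verifying that the weight shifts cleanly from $\calL^{5/4,p}_{s}$ for $\bfF_{s}$ to $\calL^{3/4,p}_{s}$ for $\bfF$, which is forced by the fact that $F_{s\nu}$ carries one more spatial derivative than $F_{\mu\nu}$.
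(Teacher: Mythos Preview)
Your proposal is correct and follows exactly the approach in the paper: reduce \eqref{eq:pEst4covFs:1} to Proposition~\ref{prop:pEst4covF} via $F_{s\nu}=\covD^{\ell}F_{\ell\nu}$, then obtain \eqref{eq:pEst4covFs:2} from \eqref{eq:pEst4covFs:1} by the covariant Gagliardo--Nirenberg inequality \eqref{eq:covSob:3} and the Correspondence Principle. Your write-up simply fills in the weight-shift and Cauchy--Schwarz details that the paper leaves implicit.
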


\begin{proof} 
The $L^{2}$-type estimate \eqref{eq:pEst4covFs:1} follows immediately from Proposition \ref{prop:pEst4covF} by the relation $F_{s\nu} = \covD^{\ell} F_{\ell \nu}$. The  $L^{\infty}$-type estimate \eqref{eq:pEst4covFs:2} then follows from \eqref{eq:pEst4covFs:1} by \eqref{eq:covSob:3} of Corollary \ref{cor:covSob} (covariant Gagliardo-Nirenberg) and the Correspondence Principle. \qedhere
\end{proof}

The above discussion may be easily restricted to a connection 1-form $(A_{i}, A_{s})$ satisfying \eqref{eq:cYMHF}, i.e., $F_{si} = \covD^{\ell} F_{\ell i}$. Repeating the proof of Proposition \ref{prop:pEst4covF}, the following proposition easily follows.
\begin{proposition}[Covariant parabolic estimates for $F$] \label{prop:pEst4covFij}
Let $\dlt > 0$ be as in Proposition \ref{prop:pEst4covF}, and consider a solution $(A_{i}, A_{s})$ to the covariant Yang-Mills heat flow $F_{si} = \covD^{\ell} F_{\ell i}$ on $\bbR^{3} \times [0,1]$ such that $A_{i}, A_{s} \in C^{\infty}_{s}([0,1], H^{\infty}_{x})$. If $\bfB = \bfB[F(s=0)] < \dlt$, where $\bfB[F(s=0)]$ is the magnetic energy of $F(s=0)$ defined in \eqref{eq:Menergy}, then the following estimate holds for every integer $k \geq 1$.
\begin{equation} \label{eq:pEst4covFij:1}
	\nrm{\calD_{x}^{(k-1)} F}_{\calL^{3/4, \infty}_{s} \calL^{2}_{x}(0,1]} + \nrm{\calD_{x}^{(k)} F}_{\calL^{3/4, 2}_{s} \calL^{2}_{x}(0,1]} \leq C_{k, \bfB} \cdot \sqrt{\bfB}.
\end{equation}
\end{proposition}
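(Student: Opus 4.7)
The plan is to imitate the proof of Proposition \ref{prop:pEst4covF} essentially verbatim, replacing the full space-time curvature $\bfF = F_{\mu\nu}$ by the spatial curvature $F = F_{ij}$, and the conserved energy $\bfE$ by the magnetic energy $\bfB$. The key input is the covariant parabolic hierarchy \eqref{eq:BWeq:3} for $\covD_{x}^{(k)} F$, which has already been recorded in the paper and which has exactly the same algebraic structure as \eqref{eq:BWeq:2}. Note that since we are on a purely spatial slice, there is no distinction between $\bfF$ and $F$ and no mixed-index Bianchi calculation is needed, so the reduction is clean.

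For the base cases $k = 1, 2$, the plan is to apply the energy integral inequality (Lemma \ref{lem:pEst4covHeat}) with $\ell = 3/4$ to the equation for $F$ and with $\ell = 3/4 + 1/2$ to the equation for $\covD_{x} F$. The term at $s = 0$ in the first application produces $C \sqrt{\bfB}$; the corresponding term in the second application vanishes because $F$ is regular on the closed interval $[0,1]$, so $\liminf_{s \to 0} s^{3/4} \nrm{\calD_{x} F(s)}_{\calL^{2}_{x}(s)} = 0$. Defining
\begin{equation*}
\calB_{2}(\subr) := \sum_{k=1,2} \bigl( \nrm{\calD_{x}^{(k-1)} F}_{\calL^{3/4, \infty}_{s} \calL^{2}_{x}(0,\subr]} + \nrm{\calD_{x}^{(k)} F}_{\calL^{3/4, 2}_{s} \calL^{2}_{x}(0,\subr]} \bigr),
\end{equation*}
combining the two estimates produces a bound of the form $\calB_{2}(\subr) \leq C \sqrt{\bfB} + C\, \calB_{2}(\subr)^{2}$, where the quadratic term is estimated by the same H\"older / covariant Sobolev (Corollary \ref{cor:covSob}) / Correspondence Principle / Lemma \ref{lem:Holder4Ls} chain used in the proof of Proposition \ref{prop:pEst4covF}. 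A standard bootstrap argument on $\subr \in (0, 1]$, valid because $\bfB < \dlt$ with $\dlt$ small, then yields $\calB_{2}(1) \leq C \sqrt{\bfB}$.

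For $k \geq 3$, I would argue by induction on $k$, assuming the estimate has been established for all orders up to $k - 1$, and hence that the induction quantity $\calB_{k-1}$ is bounded by $C_{k,\bfB} \sqrt{\bfB}$. Applying Lemma \ref{lem:pEst4covHeat} with $\ell = 3/4 + (k-1)/2$ to \eqref{eq:BWeq:3} for $\covD_{x}^{(k-1)} F$ reduces the task to bounding
\begin{equation*}
\sum_{j=0}^{k-1} \nrm{\calO(\calD_{x}^{(j)} F, \calD_{x}^{(k-1-j)} F)}_{\calL^{3/4+1, 1}_{s} \calL^{2}_{x}(0,1]} \leq C\, \calB_{k-1}^{2}.
\end{equation*}
By the Leibniz expansion and the Correspondence Principle, this bilinear estimate reduces to the same two trilinear fixed-time estimates (one of type $\covD_{x} \sgm_{1} \cdot \covD_{x} \sgm_{2}$ and one of type $\sgm_{1} \cdot \covD_{x}^{(2)} \sgm_{2}$) that appear in the proof of Proposition \ref{prop:pEst4covF}, both of which follow from H\"older together with \eqref{eq:covSob:1}, \eqref{eq:covSob:2} and \eqref{eq:covSob:3}.

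I do not anticipate a serious obstacle: the structure of the covariant parabolic equation, the smallness hypothesis, and all of the analytic tools are identical to those already used in Proposition \ref{prop:pEst4covF}. The only point that deserves a moment of care is to check that the regularity hypothesis on $F$ at $s = 0$ is strong enough to kill the boundary contribution of $s^{3/4} \nrm{\calD_{x}^{(k-1)} F(s)}_{\calL^{2}_{x}(s)}$ at $s \to 0$ for all $k$ used in the induction; this follows from the assumed regularity of $F$ on $\bbR^{3} \times [0, 1]$ together with Lemma \ref{lem:dSub} to pass between covariant and ordinary derivatives.
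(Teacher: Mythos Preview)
Your proposal is correct and matches the paper's own approach exactly: the paper simply states that Proposition \ref{prop:pEst4covFij} follows by repeating the proof of Proposition \ref{prop:pEst4covF}, and you have spelled out precisely that repetition, using \eqref{eq:BWeq:3} in place of \eqref{eq:BWeq:2} and $\bfB$ in place of $\bfE$. Your extra remark about invoking Lemma \ref{lem:dSub} to justify the vanishing of the $s \to 0$ boundary term is a slight elaboration beyond what the paper writes, but it is correct and harmless.
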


\section{Analysis of Yang-Mills heat flows in the caloric gauge} \label{sec:YMHF}

\subsection{Analysis of the Yang-Mills heat flow} \label{subsec:YMHF}
In this subsection, we will consider the following IVP for the covariant Yang-Mills heat flow $F_{si} = \covD^{\ell} F_{\ell i}$ (which are the spatial components of \eqref{eq:dYMHF}) in the caloric gauge $A_{s} = 0$:
\begin{equation*} 
\left\{
\begin{aligned}
	& \rd_{s} A_{i} = \covD^{\ell} F_{\ell i}, \quad \hbox{ on } \bbR^{3} \times [0,s_{0}] \\
	& A_{i}(s=0) = \Aini_{i},
\end{aligned}
\right.
\end{equation*}
where $s_{0} > 0$. As this system is the original Yang-Mills heat flow \eqref{eq:YMHF}, we will refer to it as such. We will mainly be concerned with the class of \emph{regular initial data sets} and \emph{regular solutions} to \eqref{eq:YMHF}, which are defined as follows.

\begin{definition} \label{def:reg4YMHF}
We say that a connection 1-form $\Aini_{i}$ on $\bbR^{3}$ is a \emph{regular initial data set} for \eqref{eq:YMHF} if $\Aini_{i} \in H^{\infty}_{x}$. Furthermore, we say that a smooth solution $A_{i}$ to \eqref{eq:YMHF} defined on $\bbR^{3} \times [0,s_{0}]$ is a \emph{regular solution} to \eqref{eq:YMHF} if $A_{i} \in C^{\infty}_{s} ([0, s_{0}], H^{\infty}_{x})$.
\end{definition}

Our immediate goal is to establish a local well-posedness theorem (Theorem \ref{thm:implwp4YMHF}), where the interval of existence depends \emph{only} on the magnetic energy $\bfB[\Fini]$ of the initial data. The starting point of our analysis is Theorem 5.1 from \cite{Oh:6stz7nRe}, which is an $\dot{H}^{1}_{x}$ local existence statement. We restate the theorem below for the convenience of the reader.

\begin{theorem}[{\cite[Theorem 5.1]{Oh:6stz7nRe}}]  \label{thm:lwp4YMHF}
Consider the above initial value problem (IVP) for \eqref{eq:YMHF} with initial data $\Aini_{i} \in \dot{H}^{1}_{x}$ at $s=0$. Then the following statements hold.
\begin{enumerate}
\item There exists a number $s^{\star} = s^{\star} (\nrm{\Aini}_{\dot{H}^{1}_{x}}) > 0$, which is non-increasing in $\nrm{\Aini}_{\dot{H}^{1}_{x}}$, such that there exists a solution $A_{i} \in C_{s} ([0,s^{\star}],  \dot{H}^{1}_{x})$ to the IVP satisfying
\begin{equation} \label{eq:lwp4YMHF:1}
	\sup_{s \in [0,s^{\star}]} \nrm{A(s)}_{\dot{H}^{1}_{x}} \leq C  \nrm{\Aini}_{\dot{H}^{1}_{x}}.
\end{equation}

\item Let $\Aini'_{i} \in \dot{H}^{1}_{x}$ be another initial data set such that $\nrm{\Aini'}_{\dot{H}^{1}_{x}} \leq \nrm{\Aini}_{\dot{H}^{1}_{x}}$, and $A'$ the corresponding solution to the IVP on $[0, s^{\star}]$ given in (1). Then the following estimate for the difference $\dlt A := A - A'$ holds. 
\begin{equation} \label{eq:lwp4YMHF:2}
	\sup_{s \in [0,s^{\star}]} \nrm{\rd_{x}( \dlt A) (s)}_{\dot{H}^{1}_{x}} \leq C  \nrm{\dlt \Aini}_{\dot{H}^{1}_{x}}.
\end{equation}

\item If $\Aini_{i}(t)$ ($t \in I$) is a one parameter family of initial data such that $\Aini_{i} \in C^{\infty}_{t}(I, H^{\infty}_{x})$, then $A_{i} = A_{i}(t,x,s)$ given by (1) for each $t \in I$ satisfies $A_{i} \in C^{\infty}_{t,s}(I \times [0,s^{\star}], H^{\infty}_{x})$.
\end{enumerate}
\end{theorem}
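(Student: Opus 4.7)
The plan is to use DeTurck's trick to convert \eqref{eq:YMHF} into a strictly parabolic system, solve it by a Picard-type iteration scheme, and then gauge-transform the result back to the caloric gauge $A_{s} = 0$. At the top order, \eqref{eq:YMHF} reads
\begin{equation*}
\rd_{s} A_{i} = \lap A_{i} - \rd^{\ell} \rd_{i} A_{\ell} + (\hbox{lower order}),
\end{equation*}
which is only weakly parabolic. Passing to the DeTurck gauge $A_{s} = \rd^{\ell} A_{\ell}$ exactly cancels the obstructive term $\rd^{\ell} \rd_{i} A_{\ell}$, yielding a genuine semilinear heat equation with a nonlinearity $N$ that is schematically $\calO(A, \rd_{x} A) + \calO(A, A, A)$.

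For Part (1), I would set up the Duhamel integral equation
\begin{equation*}
A_{i}(s) = e^{s \lap} \Aini_{i} + \int_{0}^{s} e^{(s-s')\lap} N_{i}[A](s') \, \ud s',
\end{equation*}
in a function space combining $C_{s} \dot{H}^{1}_{x}$ with an auxiliary space such as $\calL^{3/4, 2}_{s} \dot{\calH}^{2}_{x}$, chosen to match the scaling and the p-normalized framework introduced in \S\ref{subsec:covTech}. Using the heat-kernel estimate \eqref{eq:est4heatkernel} in combination with Sobolev embedding $\dot{H}^{1}_{x} \hookrightarrow L^{6}_{x}$, Gagliardo-Nirenberg, and Lemma \ref{lem:est4Duhamel}, one multilinearly bounds $N$ and closes a contraction on some short interval $[0, s^{\star}]$ whose length depends only on $\nrm{\Aini}_{\dot{H}^{1}_{x}}$. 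Uniqueness in this class and the bound \eqref{eq:lwp4YMHF:1} follow from the same estimate. Part (2), the Lipschitz dependence \eqref{eq:lwp4YMHF:2}, is then obtained by subtracting the Duhamel equations for $A, A'$ and re-running the same multilinear estimates on the difference, which is valid since the nonlinearity is polynomial.

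For Part (3), once the solution is in $C_{s} \dot{H}^{1}_{x}$, strict parabolicity in the DeTurck gauge allows bootstrapping to higher regularity: commute $\rd_{x}^{(k)}$ (and $\rd_{t}^{(j)}$, in the parameter-family case) with the equation, treat the resulting lower-order terms perturbatively, and upgrade one derivative at a time via the same semigroup bounds. Regularity of $F_{ij}$ then follows either directly from $F_{ij} = \rd_{i} A_{j} - \rd_{j} A_{i} + \LieBr{A_{i}}{A_{j}}$ or, more cleanly, from the covariant parabolic estimates of Proposition \ref{prop:pEst4covFij}. Finally, to return to the caloric gauge, solve the ODE $\rd_{s} U = -U A_{s}$ along $s$-lines with $U(s=0) = \mathrm{Id}$; the regularity of $A_{s} = \rd^{\ell} A_{\ell}$ established above, combined with standard ODE theory, shows that $U, U^{-1}$ are regular gauge transforms in the sense of Definition \ref{def:reg4gt}, so transforming back preserves properties (1)--(3).

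The main obstacle I anticipate is the criticality of the $\dot{H}^{1}_{x}$ norm relative to the quadratic nonlinearity $\calO(A, \rd_{x} A)$: the contraction will \emph{not} close on $C_{s} \dot{H}^{1}_{x}$ alone, and an auxiliary $L^{2}_{s} \dot{H}^{2}_{x}$-type bound with the correct $s$-weight is essential for absorbing the half-derivative loss. A secondary technical point is that when constructing the gauge transform from the DeTurck gauge back to $A_{s} = 0$, one must verify that $U$ remains bounded on the appropriate function spaces uniformly as $s$ varies; this amounts to controlling $\nrm{A_{s}}_{L^{1}_{s} L^{\infty}_{x}}$ and related norms, which is done by Sobolev embedding applied to the bounds already produced by the iteration.
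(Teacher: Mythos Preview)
Your proposal is correct and matches the approach sketched in the paper's Remark following the theorem (which in turn summarizes \cite[Theorem C]{Oh:6stz7nRe}): pass to the DeTurck gauge $A_{s} = \rd^{\ell} A_{\ell}$ to obtain a strictly parabolic system, solve by Picard iteration in $C_{s}\dot{H}^{1}_{x}$ with an auxiliary smoothing norm, bootstrap for higher regularity, and gauge-transform back to $A_{s}=0$ via the $s$-ODE for $U$ with $U(s=0)=\mathrm{Id}$. One minor correction: with the paper's convention $\widetilde{A}_{s} = U A_{s} U^{-1} - \rd_{s} U\, U^{-1}$, the ODE should read $\rd_{s} U = U A_{s}$ (no minus sign).
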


According to our definition, the first statement of Part (3) of Theorem \ref{thm:lwp4YMHF} states that if $\Aini_{i}$ is a regular initial data set for \eqref{eq:YMHF}, then there exists a solution $A_{i}$ to \eqref{eq:YMHF} which is \emph{regular} in the sense of Definition \ref{def:reg4YMHF}.

\begin{remark}[Remark on the proof of Theorem \ref{thm:lwp4YMHF}] \label{}
Let us return to the perspective of viewing \eqref{eq:YMHF} as the covariant Yang-Mills heat flow $F_{si} = \covD^{\ell} F_{\ell i}$  with the caloric gauge condition $A_{s} = 0$ imposed. The problem with the caloric gauge is that the equations for $A_{i}$ are not strictly parabolic, but only weakly-parabolic. As discussed in the introduction, we can make the covariant Yang-Mills heat flow strictly parabolic by choosing a different gauge, namely the DeTurck gauge $A_{s} = \rd^{\ell} A_{\ell}$. Local well-posedness for $\dot{H}^{1}_{x}$ data (on some $s$-interval $[0, s^{\star}]$) then follows by standard parabolic theory. To come back to the caloric gauge $A_{s} = 0$, however, we must perform a gauge transform. The desired gauge transform can be obtained by solving the following ODE.
\begin{equation*}
\left\{
\begin{aligned}
	\rd_{s} U =& U A_{s} \quad \hbox{ on } \bbR^{3} \times [0,s^{\star}], \\
	U(s=0) =& \mathrm{Id}.
\end{aligned}
\right.
\end{equation*}

This ODE can be easily derived by the gauge transform formula $\widetilde{A}_{s} = U A_{s} U^{-1} - \rd_{s} U U^{-1}$. The initial condition $U(s=0) = \mathrm{Id}$ has been chosen to leave the initial data the same. It is then possible to show that the gauge transform is bounded on $C_{s}([0, s^{\star}], \dot{H}^{1}_{x})$, which proves Theorem \ref{thm:lwp4YMHF}. We remark also that the gauge transform is regular (in the sense of Definition \ref{def:reg4gt}) if the initial data set is (in the sense of Definition \ref{def:reg4YMHF}).

We add that this procedure ends up being the standard DeTurck trick, as in \cite{Donaldson:1985vh}, applied to \eqref{eq:YMHF}. 
\end{remark}

As a first step, we complement Theorem \ref{thm:lwp4YMHF} with the following uniqueness statement in the class of regular solutions.

\begin{lemma}[Uniqueness of regular solution to \eqref{eq:YMHF}] \label{lem:uni4YMHF}
Let $A_{i}, A'_{i}$ be two regular solutions to \eqref{eq:YMHF} on a common $s$-interval $J = [0, s_{0}]$. If their initial data coincide, i.e., $\Aini_{i}= \Aini'_{i}$, then so do the solutions, i.e., $A_{i} = A'_{i}$.
\end{lemma}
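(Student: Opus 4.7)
The plan is to apply the DeTurck trick locally and combine it with a continuity argument. Set
$I := \{ s \in [0, s_0] : A_i|_{[0,s]} \equiv A'_i|_{[0,s]} \}$,
which is closed in $[0, s_0]$ and contains $0$ by hypothesis; the goal is to show $I = [0, s_0]$. Suppose for contradiction that $s_* := \sup I < s_0$. By continuity $A_i(s_*) = A'_i(s_*)$, so it suffices to produce some $\eps > 0$ with $A_i \equiv A'_i$ on $[s_*, s_* + \eps]$.

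The key step is a local DeTurck trick. I claim there exist regular $\LieGrp$-valued gauge transforms $V, V'$ on some interval $[s_*, s_* + \eps]$, satisfying $V(s_*) = V'(s_*) = \mathrm{Id}$, such that the transformed connections
$\widetilde A_i := V A_i V^{-1} - \rd_i V V^{-1}, \quad \widetilde A_s := -\rd_s V V^{-1}$
(and similarly $\widetilde A'_a$ from $V'$) satisfy the DeTurck gauge condition $\widetilde A_s = \rd^\ell \widetilde A_\ell$. Written out as an equation for $V$, this condition becomes a quasi-linear parabolic PDE whose principal part is $\rd_s V - \lap V$ with lower-order terms depending smoothly on $V, V^{-1}, \rd_x V$ and on the (regular) coefficients $A_\ell, \rd_x A_\ell$. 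Standard short-time existence theory for quasi-linear parabolic systems, together with the identity initial data, yields a regular $V$ on some $[s_*, s_* + \eps]$, and the same construction produces $V'$.

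Once $\widetilde A, \widetilde A'$ are placed in the DeTurck gauge, the system \eqref{eq:cYMHF} for $\widetilde A_i$ takes the strictly parabolic quasi-linear form
$\rd_s \widetilde A_i = \lap \widetilde A_i + (\text{polynomial in } \widetilde A,\, \rd_x \widetilde A)$,
and by $V(s_*) = V'(s_*) = \mathrm{Id}$ and $A_i(s_*) = A'_i(s_*)$ the initial data at $s=s_*$ coincide. A standard Gronwall-type energy estimate in the class of regular solutions gives $\widetilde A_i \equiv \widetilde A'_i$ on $[s_*, s_*+\eps]$. Setting $W := V^{-1} V'$, the identity $\widetilde A \equiv \widetilde A'$ is equivalent to $A_\mu = W A'_\mu W^{-1} - \rd_\mu W W^{-1}$ for $\mu \in \{0,1,2,3,s\}$; inspecting the $s$-component and using $A_s = A'_s = 0$ forces $\rd_s W \equiv 0$, and together with $W(s_*) = \mathrm{Id}$ this yields $W \equiv \mathrm{Id}$, hence $A_i \equiv A'_i$ on $[s_*, s_* + \eps]$, contradicting the definition of $s_*$.

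The main obstacle is the first step: constructing regular DeTurck-gauge transforms $V, V'$ from arbitrary regular caloric-gauge solutions requires solving a quasi-linear (but strictly) parabolic PDE whose coefficients involve $A$ and $\rd_x A$. Because $A, A'$ are regular on $[0, s_0]$, all these coefficients are smooth and bounded on a neighborhood of $\{s=s_*\}$, so standard short-time solvability theory suffices; the subsequent DeTurck-gauge uniqueness and the algebraic argument $W \equiv \mathrm{Id}$ are then routine.
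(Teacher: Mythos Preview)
Your argument is correct (modulo a harmless index slip: in this lemma there is no $\mu=0$ component, so the relation $A_{a} = W A'_{a} W^{-1} - \rd_{a} W W^{-1}$ should be asserted for $a \in \{1,2,3,s\}$). The route, however, is genuinely different from the paper's. The paper never leaves the caloric gauge: it writes down the covariant parabolic equations for $\dlt F_{ij}$ and $\dlt F_{si}$, together with the ODEs $\rd_{s}(\dlt A_{i}) = \dlt F_{si}$ and $\rd_{s}(\rd^{\ell}\dlt A_{\ell}) = -\LieBr{A^{\ell}}{\dlt F_{s\ell}} - \LieBr{\dlt A^{\ell}}{F'_{s\ell}}$, applies the weak maximum principle of Corollary~\ref{cor:heatIneq4Duhamel} to the first two, and closes a Gronwall inequality in $L^{\infty}_{x}$ for the quantity $\dlt\calB(s) = \nrm{\dlt A}_{L^{\infty}_{x}} + \nrm{\rd^{\ell}\dlt A_{\ell}}_{L^{\infty}_{x}} + \nrm{\dlt F}_{L^{\infty}_{x}} + \nrm{\dlt F_{s}}_{L^{\infty}_{x}}$. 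That argument is entirely self-contained within the covariant techniques developed in \S\ref{subsec:covTech} and avoids solving any auxiliary PDE. Your approach instead pays the price of solving the semilinear heat equation $\rd_{s} V - \lap V = (\hbox{l.o.t. in } V, V^{-1}, \rd_{x} V; A, \rd_{x} A)$ for the gauge transform to DeTurck, after which uniqueness reduces to the strictly parabolic case plus the clean algebraic observation that $A_{s}=A'_{s}=0$ forces $\rd_{s} W=0$. The trade-off: the paper's proof is more elementary and uses only in-house tools, while yours is more structural and makes transparent that the non-uniqueness obstruction for (cYMHF) is purely a gauge ambiguity, which the caloric condition rigidifies.
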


\begin{proof} 
%
By taking the difference between the parabolic equations satisfied by $F_{ij}$ and $F'_{ij}$, we obtain the following equation for $\dlt F_{ij} = F_{ij} - F'_{ij}$.
\begin{equation} \label{eq:uni4YMHF:pf:1}
	(\rd_{s} - \covD^{\ell} \covD_{\ell}) (\dlt F_{ij}) = - 2\LieBr{\tensor{F}{_{i}^{\ell}}}{\dlt F_{j \ell}} - 2  \LieBr{\tensor{\dlt F}{_{i}^{\ell}}}{F'_{j \ell}} + (\dlt \covD^{\ell} \covD_{\ell}) F'_{ij},
\end{equation}
where $\dlt \covD^{\ell} \covD_{\ell}$ is defined by
\begin{equation*}
	(\dlt \covD^{\ell} \covD_{\ell}) B := 2 \LieBr{\dlt A^{\ell}}{\rd_{\ell} B} + \LieBr{\rd^{\ell} (\dlt A_{\ell})}{B} + \LieBr{A^{\ell}}{\LieBr{\dlt A_{\ell}}{B}} + \LieBr{\dlt A^{\ell}}{\LieBr{A'_{\ell}}{B}}.
\end{equation*}

Next, recall that $F_{si} = \covD^{\ell} F_{\ell i}$ satisfies the equation
\begin{equation*}
	(\rd_{s} - \covD^{\ell} \covD_{\ell}) F_{si} = - 2\LieBr{\tensor{F}{_{s}^{\ell}}}{F_{i\ell}}.
\end{equation*}

Taking the difference between the equations for $F_{si}$ and $F'_{si}$, we obtain
\begin{equation} \label{eq:uni4YMHF:pf:2}
	(\rd_{s} - \covD^{\ell} \covD_{\ell}) (\dlt F_{si}) = - 2\LieBr{\tensor{F}{_{s}^{\ell}}}{\dlt F_{i\ell}} - 2\LieBr{\tensor{\dlt F}{_{s}^{\ell}}}{F'_{i\ell}} + (\dlt \covD^{\ell} \covD_{\ell}) F'_{si}.
\end{equation}

As $F_{si} = \rd_{s} A_{i}$, we have
\begin{equation} \label{eq:uni4YMHF:pf:3}
	\rd_{s} (\dlt A_{i}) = \dlt F_{si}.
\end{equation}

Finally, thanks to the special identity $\covD^{\ell} F_{s\ell} = 0$, we have the following equation for $\rd^{\ell} (\dlt A_{\ell})$. 
\begin{equation} \label{eq:uni4YMHF:pf:4}
	\rd_{s} \rd^{\ell} (\dlt A_{\ell}) = - \LieBr{A^{\ell}}{\dlt F_{s\ell}} - \LieBr{\dlt A^{\ell}}{F'_{s\ell}}.
\end{equation}

Now for each $s \in J$, let us define $\dlt \calB(s)$ to be
\begin{equation*}
	\dlt \calB(s) := \max_{i,j} \bb( \nrm{\dlt A_{i}(s)}_{L^{\infty}_{x}} + \nrm{\rd^{\ell}(\dlt A)_{\ell}(s)}_{L^{\infty}_{x}}+ \nrm{\dlt F_{ij}(s)}_{L^{\infty}_{x}} + \nrm{\dlt F_{si}(s)}_{L^{\infty}_{x}} \bb).
\end{equation*}

Let $\eps > 0$. Applying the weak maximum principle \eqref{eq:heatIneq4Duhamel:2} of Corollary \ref{cor:heatIneq4Duhamel} to \eqref{eq:uni4YMHF:pf:1}, \eqref{eq:uni4YMHF:pf:2} and integrating \eqref{eq:uni4YMHF:pf:3}, \eqref{eq:uni4YMHF:pf:4}, for each $s \in [0, s_{0} - \eps]$ we arrive at
\begin{equation*}
	\dlt \calB(s) < \dlt \calB(0) + (C+C^{2}) \int_{0}^{s} \dlt \calB(s) \, \ud s.
\end{equation*}
where 
\begin{equation*}
	C = \max_{i, j}(\nrm{A_{i}}_{L^{\infty}_{t,x,s}} + \nrm{F_{ij}}_{L^{\infty}_{t,x,s}} + \nrm{F_{si}}_{L^{\infty}_{t,x,s}}
	+ \nrm{A'_{i}}_{L^{\infty}_{t,x,s}} + \nrm{F'_{ij}}_{L^{\infty}_{t,x,s}} + \nrm{F'_{si}}_{L^{\infty}_{t,x,s}}),
\end{equation*}
with all norms taken over $I \times \bbR^{3} \times [0, s_{0}-\eps]$. Note that $C < \infty$, since $A_{\mu}, A'_{\mu}$ are regular. Then as $\dlt \calB(0) = 0$, by Gronwall's inequality, it follows that $\dlt \calB(s) = 0$ for all $[0, s_{0} -\eps]$. Taking $\eps \to 0$, we see that $A_{i} = A'_{i}$ on $I \times \bbR^{3} \times [0, s_{0})$. \qedhere
\end{proof}

\begin{theorem} [Improved local well-posedness for \eqref{eq:YMHF}] \label{thm:implwp4YMHF}
Consider the above IVP for \eqref{eq:YMHF} with a regular initial data set $\Aini_{i}$ (in the sense of Definition \ref{def:reg4YMHF}). Suppose furthermore that the initial magnetic energy is sufficiently small, i.e.,
\begin{equation*}
	\bfB[\Fini] = \frac{1}{2}\sum_{i < j} \nrm{\Fini_{ij}}^{2}_{L^{2}_{x}} < \dlt,
\end{equation*}
where $\Fini_{ij} := \rd_{i} \Aini_{j} -\rd_{j} \Aini_{i} + \LieBr{\Aini_{i}}{\Aini_{j}}$ and $\dlt > 0$ is the positive number as in Proposition \ref{prop:pEst4covF}. 

Then there exists a unique regular solution $A_{i}$ to the IVP on $\bbR^{3} \times [0, 1]$.
\end{theorem}

\begin{remark} 
Other constituents of a local well-posedness statement, such as continuous dependence on the data, can be proved by a minor modification of the proof below. Also, the statement can be extended to a rougher class of initial data and solutions by an approximation argument. We shall not provide proofs for these as they are not needed in the sequel.
\end{remark}

\begin{proof} 
By Theorem \ref{thm:lwp4YMHF} and Lemma \ref{lem:uni4YMHF}, there exists $s^{\star} > 0$ such that a unique regular solution $A_{i}$ to the IVP for \eqref{eq:YMHF} exists on $[0,s^{\star}]$ and obeys
\begin{equation} \label{eq:implwp4YMHF:pf:0}
	\sup_{0 \leq s \leq s^{\star}}\nrm{A(s)}_{L^{6}_{x}} \leq C \sup_{0 \leq s \leq s^{\star}} \nrm{A(s)}_{\dot{H}^{1}_{x}} \leq C \nrm{\Aini}_{\dot{H}^{1}_{x}}.
\end{equation}

We remark that the first inequality holds by Sobolev embedding.

Let us denote by $s_{\max}$ the largest $s$-parameter for which $A_{i}$ extends as a unique regular solution on $[0, s_{\max})$. We claim that under the hypothesis that $\bfB[\Fini] < \dlt$, the following statement holds:
\begin{equation} \label{eq:implwp4YMHF:pf:1}
\hbox{If } s_{\max} \leq 1\hbox{ then } \sup_{s \in [0, s_{\max})} \nrm{A_{i}}_{\dot{H}^{1}_{x}} < \infty. 
\end{equation}

If this claim is true, then we can apply Theorem \ref{thm:lwp4YMHF} and Lemma \ref{lem:uni4YMHF} to extend $A_{i}$ past $s_{\max}$ if $s_{\max} \leq 1$. Therefore, it follows that $s_{\max} > 1$.

Let us establish \eqref{eq:implwp4YMHF:pf:1}. The first step is to show that $\nrm{A(s)}_{L^{6}_{x}}$ does not blow up on $[0, s_{\max})$. By \eqref{eq:implwp4YMHF:pf:0}, it suffices to restrict our attention to $s > s^{\star}$; therefore, $s \in (s^{\star}, s_{\max})$. Since $s_{\max } \leq 1$, by Proposition \ref{prop:pEst4covFij} and Corollary \ref{cor:covSob}, we see that
\begin{equation*}
	\nrm{\rd_{s} A_{i}(s)}_{L^{6}_{x}} = \nrm{\covD^{\ell} F_{\ell i}(s)}_{L^{6}_{x}} \leq s^{-1} C_{\bfB} \cdot \sqrt{\bfB}.
\end{equation*}

Integrating from $s=s^{\star}$ and using \eqref{eq:implwp4YMHF:pf:0}, we arrive at
\begin{equation} \label{eq:implwp4YMHF:pf:2}
\sup_{s^{\star} < s < s_{\max}} \nrm{A(s)}_{L^{6}_{x}} \leq \nrm{\Aini}_{\dot{H}^{1}_{x}} + C_{\bfB} \cdot \abs{\log s^{\star}}  \sqrt{\bfB} < \infty.
\end{equation} 

Note that integrating from $s^{\star}$ allows us to bypass the issue of logarithmic divergence at $s=0$.

Next, let us show that $\nrm{A_{i}(s)}_{\dot{H}^{1}_{x}}$ does not blow up on $[0, s_{\max})$. Again, it suffices to consider $s \in (s^{\star}, s_{\max})$. Recall that $\covD_{x} \covD^{\ell} F_{\ell i} = \rd_{x} \covD^{\ell} F_{\ell i} + \LieBr{A}{\covD^{\ell} F_{\ell i}}$; thus by the triangle inequality and H\"older,
\begin{equation*}
	\nrm{\rd_{s} A_{i}(s)}_{\dot{H}^{1}_{x}} \leq \nrm{\covD_{x} \covD^{\ell} F_{\ell i}(s)}_{L^{2}_{x}} + \nrm{A(s)}_{L^{6}_{x}} \nrm{\covD^{\ell} F_{\ell i}(s)}_{L^{3}_{x}}.
\end{equation*}

Using Proposition \ref{prop:pEst4covFij} and Corollary \ref{cor:covSob}, we obtain
\begin{equation*}
	\nrm{\rd_{s} A_{i}(s)}_{\dot{H}^{1}_{x}} 
	\leq s^{-1} C_{\bfB} \cdot \sqrt{\bfB} + s^{-3/4} C_{\bfB} \cdot \sqrt{\bfB} \, \bb( \sup_{s^{\star} < s < s_{\max}} \nrm{A(s)}_{L^{6}_{x}} \bb).
\end{equation*}

Recalling \eqref{eq:implwp4YMHF:pf:1} and integrating from $s^{\star}$, we see that $\sup_{s^{\star} < s < s_{\max}}\nrm{A_{i}}_{\dot{H}^{1}_{x}} < \infty$, as desired. \qedhere
\end{proof}

For any regular initial data with finite magnetic energy, we can use scaling to make $\bfB(s=0) < \dlt$; thus, Theorem \ref{thm:implwp4YMHF} applies also to initial data with large magnetic energy. Furthermore, using the fact that the magnetic energy $\bfB(s)$ is non-increasing in $s$ under the Yang-Mills heat flow (which is formally obvious, as the Yang-Mills heat flow is the gradient flow of $\bfB$; see \cite{Rade:1992tu}), we can in fact iterate Theorem \ref{thm:implwp4YMHF} to obtain a unique global solution to the IVP, leading to an independent proof of the following classical result of \cite{Rade:1992tu}.

\begin{corollary}[R{\aa}de \cite{Rade:1992tu}] \label{cor:gwp4YMHF}
Consider the IVP for \eqref{eq:YMHF} with a regular initial data set $\Aini_{i}$ which possesses finite magnetic energy, i.e., $\bfB[\Fini] = (1/2) \sum_{i < j} \nrm{\Fini_{ij}}_{L^{2}_{x}} < \infty$. Then there exists a unique global regular solution $A_{i}$ to the IVP on $\bbR^{3} \times [0, \infty)$.

\end{corollary}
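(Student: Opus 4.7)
The plan is to combine the improved local existence statement of Theorem \ref{thm:implwp4YMHF} with two further ingredients: the scaling invariance \eqref{eq:YMscaling}, which allows one to reduce an arbitrary finite magnetic energy to a value below the smallness threshold $\dlt$, and the monotonicity of $\bfB(s) := \bfB[F(\cdot, s)]$ along the Yang-Mills heat flow, which allows this smallness to be preserved as we extend the solution. The overall argument is a standard continuation procedure once these ingredients are in place.

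First, I would use the scaling $A_i(x) \mapsto \lambda^{-1} A_i(x/\lambda)$, accompanied by $s \mapsto \lambda^{-2} s$, which preserves \eqref{eq:YMHF} and maps regular data and regular solutions to the same, and under which the magnetic energy transforms as $\bfB \mapsto \lambda^{-1} \bfB$. Choosing $\lambda$ sufficiently large (depending only on $\bfB[\Fini]$) reduces matters to the case $\bfB[\Fini] < \dlt$. Since global-in-$s$ existence for the rescaled data is equivalent to that for the original, we may assume $\bfB[\Fini] < \dlt$ throughout.

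The next step is monotonicity. For a regular solution to \eqref{eq:YMHF} in the caloric gauge $A_s = 0$, the Bianchi identity gives $\rd_s F_{ij} = \covD_i F_{sj} - \covD_j F_{si}$, while $F_{si} = \covD^\ell F_{\ell i}$ by \eqref{eq:YMHF} itself. A direct computation using bi-invariance of the inner product to integrate by parts then yields
\begin{equation*}
	\frac{\ud}{\ud s} \bfB[F(s)] = - \sum_{i} \nrm{F_{si}(s)}_{L^{2}_{x}}^{2} \leq 0,
\end{equation*}
so $\bfB[F(s)]$ is non-increasing in $s$; in particular, $\bfB[F(s)] < \dlt$ for as long as the solution is defined. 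Boundary terms at spatial infinity are handled by the assumed regularity ($\rd_x A$, $F$ lie in $H^{\infty}_{x}$).

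With these ingredients, the iteration is straightforward. Theorem \ref{thm:implwp4YMHF} produces a unique regular solution $A_i$ on $\bbR^3 \times [0,1]$ with $F_{ij}$ regular. Setting $\Aini^{(1)}_i(x) := A_i(x, 1)$ yields, by the regularity conclusions of Theorem \ref{thm:implwp4YMHF}, a regular initial data set in the sense of Definition \ref{def:reg4YMHF} with $\bfB[F^{(1)}] < \dlt$ by the monotonicity just established. A second application of Theorem \ref{thm:implwp4YMHF}, glued to the previous piece with Lemma \ref{lem:uni4YMHF} providing uniqueness at the overlap, extends the solution to $\bbR^3 \times [0,2]$. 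Iterating produces a unique regular solution on $\bbR^3 \times [0, n]$ for every positive integer $n$, and hence on $\bbR^3 \times [0, \infty)$. The only step with any substance is the monotonicity identity, which is just the familiar fact that \eqref{eq:YMHF} is the gradient flow of $\bfB$; I expect no real obstacle at the level of regular solutions.
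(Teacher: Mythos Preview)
Your proposal is correct and follows essentially the same approach as the paper, which sketches the argument in the paragraph preceding the corollary: use the scaling \eqref{eq:YMscaling} to reduce to $\bfB[\Fini] < \dlt$, invoke the monotonicity of $\bfB(s)$ (the gradient-flow identity you wrote out), and iterate Theorem~\ref{thm:implwp4YMHF} with uniqueness from Lemma~\ref{lem:uni4YMHF}. Your write-up in fact supplies more detail on the monotonicity computation than the paper does.
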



\subsection{Analysis of the dynamic Yang-Mills heat flow in the caloric gauge} \label{subsec:dYMHF}
Hereafter, we shall study the dynamic Yang-Mills heat flow \eqref{eq:dYMHF} in the caloric gauge $A_{s} = 0$. Writing out the left-hand side of \eqref{eq:dYMHF}, we obtain
\begin{equation} \label{eq:dYMHFcal}  
	\rd_{s} A_{\mu} = \covD^{\ell} F_{\ell \mu}, \quad \mu = 0,1,2,3.
\end{equation}

Let $I \subset \bbR$ be an interval. We will study the IVP associated to \eqref{eq:dYMHFcal} on $I \times \bbR^{3} \times [0,1]$, with the initial data given by
\begin{equation} 
	A_{\mu} (t,x, 0) = \Aini_{\mu} (t,x) \quad \hbox{ on } I \times \bbR^{3}.
\end{equation}

As in \S \ref{subsec:YMHF}, we will be focusing on \emph{regular} initial data sets and solutions to \eqref{eq:dYMHFcal}, whose definitions we give below.
\begin{definition} \label{def:reg4dYMHF}
Let $I \subset \bbR$ be an interval. We say that a connection 1-form $\Aini_{\mu} = \Aini_{\mu}(t,x)$ defined on $I \times \bbR^{3}$ is a \emph{regular initial data set} for \eqref{eq:dYMHF} in the caloric gauge if $\Aini_{i} \in C^{\infty}_{t}(I, H^{\infty}_{x})$. Furthermore, we say that a smooth solution $A_{\mu}$ to \eqref{eq:dYMHFcal} defined on $I \times \bbR^{3} \times [0,s_{0}]$ is a \emph{regular solution} to \eqref{eq:dYMHF} in the caloric gauge if $A_{\mu} \in C^{\infty}_{t,s}(I \times [0, s_{0}], H^{\infty}_{x})$.
\end{definition}

We begin with a uniqueness lemma for a regular solution to \eqref{eq:dYMHF} in the caloric gauge.
\begin{lemma}[Uniqueness of regular solution to \eqref{eq:dYMHF} in the caloric gauge] \label{lem:uni4dYMHF}
Let $A_{\mu}, A'_{\mu}$ be two regular solutions to \eqref{eq:dYMHF} in the caloric gauge on $I \times \bbR^{3} \times [0, s_{0}]$ for some $I \subset \bbR$ and $s_{0} > 0$. If their initial data coincide, i.e., $A_{\mu}(s=0)= A'_{\mu}(s=0)$ on $I \times \bbR^{3}$, then so do the solutions, i.e., $A_{\mu} = A'_{\mu}$ on $I \times \bbR^{3} \times [0, s_{0}]$.
\end{lemma}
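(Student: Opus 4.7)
The plan is to exploit a \emph{decoupling} that occurs in the caloric gauge: since $A_{s}=0$, the $\mu=1,2,3$ components of \eqref{eq:dYMHFcal} depend only on the spatial part $A_{i}$, and so constitute the ordinary Yang-Mills heat flow \eqref{eq:YMHF} at each fixed $t$. Uniqueness of the spatial part can therefore be imported from Lemma \ref{lem:uni4YMHF}, after which the temporal component $A_{0}$ will be shown to satisfy a linear covariant heat equation with vanishing data.

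First I would invoke Lemma \ref{lem:uni4YMHF}. For each fixed $t\in I$, the restrictions $A_{i}(t,\cdot,\cdot)$ and $A'_{i}(t,\cdot,\cdot)$ are regular solutions of \eqref{eq:YMHF} on $\bbR^{3}\times[0,s_{0}]$ in the sense of Definition \ref{def:reg4YMHF} (regularity in $x$ and $s$ follows from the regularity of $\rd_{t,x}A_{\mu}$ in all variables), and they share initial data by hypothesis. Hence $A_{i}=A'_{i}$ on all of $I\times\bbR^{3}\times[0,s_{0}]$.

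Next, I would exploit this to derive an equation for $\dlt A_{0}:=A_{0}-A'_{0}$. Using $A_{i}=A'_{i}$, the cross terms in $F_{\ell 0}=\rd_{\ell}A_{0}-\rd_{0}A_{\ell}+\LieBr{A_{\ell}}{A_{0}}$ telescope into a covariant derivative:
\begin{equation*}
F_{\ell 0}-F'_{\ell 0} \;=\; \rd_{\ell}\dlt A_{0}+\LieBr{A_{\ell}}{\dlt A_{0}} \;=\; \covD_{\ell}\dlt A_{0}.
\end{equation*}
Since the covariant derivatives for the two solutions coincide, subtracting the $\mu=0$ equations of \eqref{eq:dYMHFcal} yields
\begin{equation*}
\rd_{s}\dlt A_{0}\;=\;\covD^{\ell}\covD_{\ell}\dlt A_{0},\qquad \dlt A_{0}(s=0)=0,
\end{equation*}
which, recalling $\covD_{s}=\rd_{s}$ in the caloric gauge, is precisely an instance of \eqref{eq:covHeat} with source $\calN=0$ and zero initial data.

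Finally, I would close by applying the Bochner-Weitzenb\"ock-type inequality (Lemma \ref{lem:heatIneq4abs}) and the weak maximum principle (Corollary \ref{cor:heatIneq4Duhamel}) to this equation at each $t\in I$, giving the pointwise bound $\abs{\dlt A_{0}}(t,x,s)\leq e^{s\lap}\abs{\dlt A_{0}(t,\cdot,0)}(x)=0$. Hence $A_{0}=A'_{0}$, completing the proof. The argument is essentially obstruction-free: the only technical point to verify is that the restriction $A_{i}(t,\cdot,\cdot)$ inherits enough regularity to qualify as a regular solution of \eqref{eq:YMHF} in the sense needed by Lemma \ref{lem:uni4YMHF}, which is immediate from Definition \ref{def:reg4dYMHF}. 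A more pedestrian alternative would be to mimic the Gronwall argument of Lemma \ref{lem:uni4YMHF} directly on the enlarged system including $F_{0\mu}$ and $F_{s\mu}$, but the decoupling approach above is cleaner and avoids any new parabolic computation.
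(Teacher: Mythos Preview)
Your proof is correct and shares the paper's first step (reducing to Lemma \ref{lem:uni4YMHF} at each fixed $t$ to obtain $A_{i}=A'_{i}$), but you handle the temporal component differently. The paper works with $\dlt F_{0i}$, which after $A_{i}=A'_{i}$ satisfies the \emph{inhomogeneous} covariant heat equation
\[
(\rd_{s}-\covD^{\ell}\covD_{\ell})(\dlt F_{0i})=-2\LieBr{\tensor{\dlt F}{_{0}^{\ell}}}{F_{i\ell}},
\]
and then applies the weak maximum principle together with a Gronwall step to force $\dlt F_{0i}=0$; only afterwards does it integrate $\rd_{s}A_{0}=\covD^{\ell}F_{\ell 0}$ to conclude $\dlt A_{0}=0$. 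Your observation that $F_{\ell 0}-F'_{\ell 0}=\covD_{\ell}\dlt A_{0}$ collapses the difference of the $\mu=0$ equations into the \emph{homogeneous} covariant heat equation $\rd_{s}\dlt A_{0}=\covD^{\ell}\covD_{\ell}\dlt A_{0}$, so the pointwise bound from Corollary \ref{cor:heatIneq4Duhamel} kills $\dlt A_{0}$ in one stroke with no Gronwall and no intermediate passage through $F_{0i}$. This is a genuine simplification. The only point worth flagging is that Definition \ref{def:reg4dYMHF} controls $\rd_{t,x}A_{0}$ rather than $A_{0}$ itself, so $\dlt A_{0}$ is a priori only in $L^{6}_{x}$ (via $\dot{H}^{1}_{x}$); this is still ample decay for the Duhamel/maximum-principle step to be rigorous, but you might say so explicitly.
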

\begin{proof} 
Note that the spatial components $A_{i}(t)$ satisfy \eqref{eq:YMHF} for each fixed $t$; therefore, by Lemma \ref{lem:uni4YMHF}, it follows that $A_{i} = A'_{i}$. As a consequence, we are only left to show $A_{0} = A'_{0}$.

As we already know that $A_{i} = A'_{i}$, observe that $\dlt F_{0i} = F_{0i} - F'_{0i}$ now obeys the simple equation
\begin{equation*}
	(\rd_{s} - \covD^{\ell} \covD_{\ell}) (\dlt F_{0i}) = -2 \LieBr{\tensor{\dlt F}{_{0}^{\ell}}}{F_{i\ell}}. 
\end{equation*}

Note furthermore that $\dlt F_{0i}(s=0) = 0$. Applying the weak maximum principle \eqref{eq:heatIneq4Duhamel:2} and using Gronwall's inequality as before, we see that $F_{0i} = F'_{0i}$ on $I \times \bbR^{3} \times [0, s_{0})$. Then $\rd_{s} A_{0} = \covD^{\ell} F_{\ell 0} = \rd_{s} A'_{0}$ everywhere. Since $A_{0} (s=0) = A'_{0}(s=0)$, it follows that $A_{0} = A'_{0}$ on $I \times \bbR^{3} \times [0, s_{0})$, which concludes the proof.  \qedhere
\end{proof}

We end this section with a well-posedness statement (by which we mean existence and uniqueness) for the IVP for \eqref{eq:dYMHF} in the caloric gauge with regular initial data set possessing finite energy $\bfE[0] < \infty$. In fact, the theorem itself concerns the case of small energy, but the theorem extends to the large energy case by scaling. 

\begin{theorem} [Local well-posedness for \eqref{eq:dYMHF} in the caloric gauge]  \label{thm:impLWP4dYMHF}
Let $I \subset \bbR$ be an interval and consider the above IVP for \eqref{eq:dYMHF} in the caloric gauge with a regular initial data set $\Aini_{\mu}$ (in the sense of Definition \ref{def:reg4dYMHF}). Suppose furthermore that the energy is uniformly small on $I$, i.e.,
\begin{equation*}
	\sup_{t \in I} \bfE[\overline{\bfF}(t)] = \sup_{t \in I} \frac{1}{2}\sum_{\mu < \nu} \nrm{\Fini_{\mu \nu}(t)}^{2}_{L^{2}_{x}} < \dlt,
\end{equation*}
where $\dlt > 0$ is the positive number as in Proposition \ref{prop:pEst4covF}. 
Then there exists a unique regular solution $A_{\mu}$ to the IVP on $I \times \bbR^{3} \times [0, 1]$. 
\end{theorem}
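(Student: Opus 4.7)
The plan is to exploit the structure of \eqref{eq:dYMHF} in the caloric gauge: the spatial components $A_i$ satisfy the (unextended) Yang-Mills heat flow \eqref{eq:YMHF} for each fixed $t$, while $A_0$ obeys a \emph{linear} inhomogeneous covariant heat equation with coefficients determined by $A_i$. This decoupling allows us to handle the two components separately, and uniqueness is already provided by Lemma \ref{lem:uni4dYMHF}.

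For the spatial components $A_i$: Fix a compact subinterval $I_0 \subset I$. Since the hypothesis implies $\bfB[\Fini(t)] \leq \bfE[\overline{\bfF}(t)] < \dlt$ uniformly in $t \in I_0$, Theorem \ref{thm:implwp4YMHF} applied at each fixed $t \in I_0$ produces a unique regular (in $(x,s)$) solution $A_i(t,\cdot,\cdot)$ to \eqref{eq:YMHF} on $\bbR^3 \times [0,1]$, together with a uniform bound on $\sup_{s \in [0,1]} \nrm{A_i(t,\cdot,s)}_{\dot H^1_x}$ extracted from the bootstrap argument in the proof of Theorem \ref{thm:implwp4YMHF}. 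To upgrade this to joint regularity in $(t,x,s)$, I would invoke Theorem \ref{thm:lwp4YMHF} Part (3), which gives $\rd_{t,x} A_i$ regular on $I_0 \times \bbR^3 \times [0,s^\star]$ for some $s^\star > 0$ depending only on $\sup_{t \in I_0} \nrm{\Aini_i(t)}_{\dot H^1_x}$. A finite bootstrap in $s$, keeping the step size uniformly bounded below by the above uniform $\dot H^1_x$ control on $A_i(s)$, and gluing via Lemma \ref{lem:uni4YMHF}, extends the joint regularity to $[0,1]$. Since $I_0 \subset I$ was arbitrary, we obtain $A_i$ with $\rd_{t,x} A_i$ regular on $I \times \bbR^3 \times [0,1]$.

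For $A_0$: using $F_{\ell 0} = \covD_\ell A_0 - \rd_0 A_\ell$, the equation $\rd_s A_0 = \covD^\ell F_{\ell 0}$ reduces to the linear inhomogeneous covariant heat equation
\[
	(\rd_s - \covD^\ell \covD_\ell) A_0 = -\covD^\ell \rd_0 A_\ell,
\]
whose coefficients and source term are expressed entirely in terms of the already-constructed regular $A_i$. Taking initial data $A_0(s=0) = \Aini_0 \in L^\infty_t L^3_x$, I would solve this via a Duhamel argument against the covariant heat kernel, deploying the toolkit of \S \ref{subsec:covTech} (Kato's inequality, the energy integral estimate \eqref{eq:pEst4covHeat:1}, and Lemma \ref{lem:est4Duhamel}). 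This first yields an $L^\infty_t L^3_x$-based solution that is continuous in $s$ down to $s = 0$; then bootstrapping via parabolic smoothing gives $A_0 \in C^\infty_{t,s}(I \times [0,1], L^3_x)$, with the regularity of $F_{\mu\nu}$ for $s > 0$ following from the explicit formulas $F_{0i} = \rd_0 A_i - \rd_i A_0 + \LieBr{A_0}{A_i}$ and $F_{ij} = \rd_i A_j - \rd_j A_i + \LieBr{A_i}{A_j}$ together with the regularity of $A_i$.

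The main obstacle will be the low regularity of $\Aini_0$ (only $L^\infty_t L^3_x$), which forces a hybrid approach: $L^3_x$-based continuity in $s$ at $s = 0$, with higher regularity obtained only for $s > 0$ through parabolic smoothing. In particular, the low-regularity commutator terms $\LieBr{A_\ell}{A_0}$ inside $\covD^\ell \covD_\ell A_0$ and $\LieBr{A^\ell}{\rd_0 A_\ell}$ inside the source must be tracked carefully; however, since $A_i$ is regular and in particular $A_i \in L^\infty_{t,s,x}$ by covariant Sobolev (Corollary \ref{cor:covSob}), these products are well-controlled and the linear theory closes as expected.
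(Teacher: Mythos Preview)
Your treatment of the spatial components $A_i$ matches the paper's: both apply Theorem \ref{thm:implwp4YMHF} fiberwise in $t$ and then invoke the one-parameter statement in Part (3) of Theorem \ref{thm:lwp4YMHF} to upgrade to joint regularity in $(t,x,s)$ on all of $[0,1]$.

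For $A_0$ your route diverges from the paper's. The paper does \emph{not} solve the covariant heat equation for $A_0$ directly; instead it first solves the linear covariant parabolic equation
\[
(\rd_s - \covD^\ell\covD_\ell) F_{0i} = 2\LieBr{\tensor{F}{_i^\ell}}{F_{0\ell}}
\]
for a tensor $F'_{0i}$ with data $\Fini_{0i}$ (citing \cite[Proposition 5.6]{Oh:6stz7nRe} for existence), then \emph{defines} $A_0$ by integrating the ODE $\rd_s A_0 = \covD^\ell F'_{\ell 0}$ from $\Aini_0$, and finally checks a posteriori that the curvature of the resulting $A_\mu$ agrees with $F'_{0i}$ (via \cite[Lemma 6.1]{Oh:6stz7nRe}). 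Your approach, solving $(\rd_s - \covD^\ell\covD_\ell)A_0 = -\covD^\ell \rd_0 A_\ell$ directly, is also correct and has the virtue of avoiding that consistency check. The paper's route, on the other hand, gives regularity of $F_{0i}$ immediately and keeps the $L^3_x$ issue for $A_0$ isolated to a trivial $s$-integration, whereas you must run the linear parabolic theory in an $L^3_x$-based framework and then recover $F_{0i}$ regularity from the formula.

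Two minor slips: there is no ``covariant heat kernel'' in the sense you invoke --- the tools of \S\ref{subsec:covTech} give a priori estimates, and Lemma \ref{lem:est4Duhamel} is for the \emph{scalar} kernel $e^{s\lap}$; existence for your linear equation should instead be argued via standard strictly-parabolic theory (the principal part is $\rd_s - \lap$) or by Picard iteration against $e^{s\lap}$ with the first-order terms $2\LieBr{A^\ell}{\rd_\ell A_0}$ treated as source. Also, your appeal to Corollary \ref{cor:covSob} for $A_i \in L^\infty_{t,s,x}$ is misplaced since $A_i$ is not gauge-covariant; the bound follows instead from ordinary Sobolev and the regularity of $\rd_x A_i$ already established.
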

\begin{proof} 
As in the proof of Lemma \ref{lem:uni4dYMHF}, we begin with the observation that the spatial components $A_{i}(t)$ satisfy \eqref{eq:YMHF} for each fixed $t$. Observe furthermore that $\bfB[\Fini(t)] \leq \bfE[\overline{\bfF}(t)] < \dlt$, where $\dlt > 0$ is the small constant common to Propositions \ref{prop:pEst4covF} and \ref{prop:pEst4covFij}. Repeating the proof of Theorem \ref{thm:implwp4YMHF} (but this time making use of the second statement of Part (3) of Theorem \ref{thm:lwp4YMHF}), we obtain a smooth spatial 1-form $A_{i}$ on $I \times \bbR^{3} \times [0,1]$ solving \eqref{eq:YMHF} for each fixed $t$, such that $A_{i} \in C^{\infty}_{t,s}(I \times [0,1], H^{\infty}_{x})$. At this point, we are left to solve for $A_{0}$, making use of the equation
\begin{equation} \label{eq:impLWP4dYMHF:pf:0}
	\rd_{s} A_{0} = \covD^{\ell} F_{\ell 0}.
\end{equation}

Here, we follow Step 1 of \cite[Proof of Theorem 4.8]{Oh:6stz7nRe}. Note that $F_{0i}$ satisfies the covariant parabolic equation
\begin{equation} \label{eq:impLWP4dYMHF:pf:1}
	\rd_{s} F_{0i} - \covD^{\ell} \covD_{\ell} F_{0i} = 2 \LieBr{\tensor{F}{_{i}^{\ell}}}{F_{0\ell}}
\end{equation}

Since $A_{i}$ and $F_{ij}$ have already been solved for, we may view this as a linear system of parabolic equation for $F_{0 i}$ with smooth coefficients, which is amenable to standard techniques. Indeed, applying \cite[Proposition 5.7]{Oh:6stz7nRe}, there exists a unique regular solution $F'_{0i} = - F'_{i0}$ to \eqref{eq:impLWP4dYMHF:pf:1} on $I \times \bbR^{3} \times [0,1]$ with $F'_{0i}(s=0) = \Fini_{0i}$.  Then motivated by \eqref{eq:impLWP4dYMHF:pf:0}, we extend $\Aini_{0}$ to $A_{0}$ on $I \times \bbR^{3} \times [0,1]$ by solving
\begin{equation*}
\left\{
\begin{aligned}
	& \rd_{s} A_{0} = \covD^{\ell} F'_{\ell 0}\quad \hbox{ on } I \times \bbR^{3} \times [0,1], \\
	&A_{0}(s=0) = \Aini_{0} \quad \hbox{ on } I \times \bbR^{3}. 
\end{aligned}
\right.
\end{equation*}

By construction, it is easy to verify that $A_{0}  \in C^{\infty}_{t,s} (I \times [0,1], H^{\infty}_{x})$. With $A_{0}$ defined on $I \times \bbR^{3} \times [0,1]$, we may define the curvature components $F_{0i} = \rd_{0} A_{i} - \rd_{i} A_{0} + \LieBr{A_{0}}{A_{i}}$ and ask whether $F_{0i} = F'_{0i}$. This is indeed the case, by \cite[Lemma 6.1]{Oh:6stz7nRe}, from which it follows that $A_{0}$ satisfies \eqref{eq:impLWP4dYMHF:pf:0}. Combined with Lemma \ref{lem:uni4dYMHF}, we conclude that $A_{\mu}$ is the desired solution to \eqref{eq:dYMHF} in the caloric gauge. \qedhere
\end{proof}

\subsection{Substitution of covariant derivatives by usual derivatives} \label{subsec:dSub}
At several points below, we will need to transfer estimates for covariant derivatives to the corresponding estimates for usual derivatives. The purpose of this part is to develop a general technique for carrying out such procedures. Our starting point is the following proposition, which concerns estimates for the $L^{\infty}_{x}$ norm of $\bfA$.

To state the following proposition, we need the following definition.
\begin{equation} \label{eq:calIAlow}
	{}^{(\Alow)}\calI(t) := \sum_{k=1}^{31} \nrm{\rd_{t,x} \Alow(t)}_{\dot{H}^{k-1}_{x}}.
\end{equation}

In fact, this is a part of a larger norm $\calI(t)$, whose definition will be given in Section \ref{sec:pfOfIdEst}.

\begin{proposition} \label{prop:intEst4A:s}
Let $I \subset \bbR$ be an interval, $t \in I$, and consider a regular solution $A_{\mu}$ to \eqref{eq:dYMHF} in the caloric gauge $A_{s} = 0$ on $I \times \bbR^{3} \times [0,1]$. Suppose that $\bfE(t) = \bfE[\bfF(t, s=0)] < \dlt$, where $\dlt > 0$ is the small constant in Proposition \ref{prop:pEst4covF}. Then the following estimate holds for all $0 \leq k \leq 29$.
\begin{equation} \label{eq:intEst4A:s:0}
\nrm{\nb_{x}^{(k)} \bfA(t)}_{\calL^{1/4, \infty}_{s} \calL^{\infty}_{x}(0,1]} \leq \calIAlow(t) + C_{k, \bfE(t), \calIAlow(t)} \cdot \sqrt{\bfE(t)}.
\end{equation}
\end{proposition}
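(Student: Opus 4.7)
The plan is to use the caloric gauge identity $F_{s\mu} = \rd_s A_\mu$ (arising from $A_s = 0$) to write $A_\mu(s)$ as $\Alow_\mu - \int_s^1 F_{s\mu}(s')\,ds'$ for $s \in (0, 1]$, convert ordinary $x$-derivatives of $F_{s\mu}$ into covariant ones via Lemma \ref{lem:dSub} (identity \eqref{eq:dSub:u2cov}), and then bound the resulting terms using the covariant parabolic estimate of Corollary \ref{cor:pEst4covFs} together with a strong induction on $k$. After differentiation, one obtains a schematic decomposition
\begin{equation*}
\rd_x^{(k)} A_\mu(t, x, s) = \rd_x^{(k)} \Alow_\mu - \int_s^1 \covD_x^{(k)} F_{s\mu}(s')\, ds' - \sum_{\star} \int_s^1 \calO\bigl(\rd_x^{(\ell_1)} A, \ldots, \rd_x^{(\ell_j)} A, \covD_x^{(\ell)} F_{s\mu}\bigr)(s')\, ds',
\end{equation*}
with the error sum indexed by $1 \leq j \leq k$, $0 \leq \ell_i, \ell \leq k-1$ and $j + \ell_1 + \cdots + \ell_j + \ell = k$. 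Taking $L^\infty_x$, multiplying by $s^{1/4 + k/2}$, and then $\sup$ over $s \in (0, 1]$ reproduces exactly the norm on the left-hand side of \eqref{eq:intEst4A:s:0}.

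The $\Alow$-contribution is handled by Gagliardo-Nirenberg: since $k \leq 29$ ensures that $\nrm{\rd_x^{(k+2)} \Alow}_{L^2_x}$ is controlled by $\calIAlow(t)$, one has $\nrm{\rd_x^{(k)} \Alow}_{L^\infty_x} \lesssim \calIAlow$, and the weight $s^{1/4 + k/2} \leq 1$ keeps this of the form $\calIAlow$. The principal term is where Corollary \ref{cor:pEst4covFs} enters: unpacking $\nrm{\calD_x^{(k)} \bfF_s}_{\calL^{5/4, \infty}_s \calL^\infty_x} \leq C_{k, \bfE}\sqrt{\bfE}$ yields the pointwise bound $\nrm{\covD_x^{(k)} F_s(s')}_{L^\infty_x} \leq C_{k, \bfE}(s')^{-5/4 - k/2} \sqrt{\bfE}$. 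Integration in $s'$ from $s$ to $1$ produces $C_{k, \bfE} s^{-1/4 - k/2}\sqrt{\bfE}$, and the weight $s^{1/4 + k/2}$ exactly cancels this divergence, leaving a contribution $C_{k, \bfE}\sqrt{\bfE}$.

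The errors close by strong induction on $k \geq 1$, the base case $k = 0$ being immediate since Lemma \ref{lem:dSub} produces no errors at all. Assuming \eqref{eq:intEst4A:s:0} at each $\ell_i < k$ provides the pointwise estimate $\nrm{\rd_x^{(\ell_i)} A(s')}_{L^\infty_x} \leq (s')^{-1/4 - \ell_i/2}\bigl(\calIAlow + C_{\ell_i, \bfE, \calIAlow}\sqrt{\bfE}\bigr)$, and Corollary \ref{cor:pEst4covFs} handles the $\covD^{(\ell)} F_s$ factor. Multiplying through, each error summand is bounded in $L^\infty_x$ by $C_{k, \bfE, \calIAlow}(s')^{-j/4 - (\ell_1 + \cdots + \ell_j)/2 - 5/4 - \ell/2}\sqrt{\bfE}$, with all further powers of $\sqrt{\bfE}$ coming from the expansion of $(\calIAlow + \sqrt{\bfE})^j$ absorbed into the constant using the smallness $\bfE < \dlt$. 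The constraint $j + \sum \ell_i + \ell = k$ collapses the exponent to $(s')^{j/4 - k/2 - 5/4}$, which is always strictly less than $-1$; integration in $s'$ gives $\lesssim s^{j/4 - k/2 - 1/4}$, and multiplication by $s^{1/4 + k/2}$ yields $s^{j/4} \leq 1$. The main technical obstacle is precisely this bookkeeping of $s'$-exponents; it works out uniformly because the $-5/4$ decay supplied by the Yang-Mills heat flow dominates, and the schematic constraint $j + \sum \ell_i + \ell = k$ from Lemma \ref{lem:dSub} is exactly what matches the weight $s^{1/4 + k/2}$ demanded by the target norm.
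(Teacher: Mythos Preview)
Your proof is correct and follows essentially the same approach as the paper's own proof: both use the fundamental theorem of calculus along $s$ (via $\rd_s A_\mu = F_{s\mu}$ in the caloric gauge), Lemma~\ref{lem:dSub} to convert $\rd_x^{(k)} F_{s\mu}$ to $\covD_x^{(k)} F_{s\mu}$ plus lower-order errors, Corollary~\ref{cor:pEst4covFs} for the covariant factors, and strong induction on $k$ with the same $s'$-exponent bookkeeping. The only cosmetic difference is that the paper isolates the intermediate pointwise bound $\sup_{s}\nrm{s^{5/4} \nb_x^{(k)} \bfF_s(s)}_{L^\infty_x} \leq C_{k,\bfE,\calIAlow}\sqrt{\bfE}$ as a separate claim before integrating, whereas you carry out the decomposition directly inside the $s'$-integral.
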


\begin{proof} 
Henceforth, we shall fix $t \in I$ and omit writing $t$. By the caloric gauge condition $A_{s} = 0$, we have the relation $\rd_{s} A_{\nu} = F_{s \nu}$, where the latter can be controlled by Corollary \ref{cor:pEst4covFs}. Observe furthermore that
\begin{equation*}
	\nrm{\rd_{x}^{(k)} \bfAlow}_{L^{\infty}_{x}} \leq C \, \calIAlow
\end{equation*}
for $0 \leq k \leq 29$, by Sobolev (or Gagliardo-Nirenberg). Now, the idea is to use the fundamental theorem of calculus of control $\rd_{x}^{(k)} \bfA(s)$ for $0 < s \leq 1$.

We will proceed by induction on $k$. Let us start with the case $k=0$. By the fundamental theorem of calculus and Minkowski's inequality, we have
\begin{equation*}
	s^{1/4} \nrm{\bfA(s)}_{L^{\infty}_{x}} \leq s^{1/4} \nrm{\bfAlow}_{L^{\infty}_{x}} + \int_{s}^{1} (s/s')^{1/4} (s')^{5/4} \nrm{\bfF_{s}(s')}_{L^{\infty}_{x}} \, \frac{\ud s'}{s'}.
\end{equation*}

As remarked earlier, the first term on the right-hand side may be estimated by $\calIAlow$ uniformly in $s \in (0,1]$. For the second term, we apply \eqref{eq:pEst4covFs:2} of Corollary \ref{cor:pEst4covFs} and estimate $(s')^{5/4} \nrm{\bfF_{s}(s')}_{L^{\infty}_{x}}$ by $C_{k, \bfE} \cdot \sqrt{\bfE}$. The case $k=0$ of \eqref{eq:intEst4A:s:0} follows, since
\begin{equation*}
\sup_{0 < s \leq 1} \int_{s}^{1} (s/s')^{1/4} \, \frac{\ud s'}{s'} \leq C < \infty.
\end{equation*} 

Next, for the purpose of induction, assume that \eqref{eq:intEst4A:s:0} holds for $0, 1, \ldots, k-1$, where $1 \leq k \leq 29$. Taking $\rd^{(k)}_{x}$ of $\rd_{s} A_{\nu} = F_{s \nu}$, using the fundamental theorem of calculus, Minkowski's inequality and multiplying both sides by $s^{1/4+k/2}$, we arrive at
\begin{equation*}
	s^{1/4} \nrm{\nb^{(k)}_{x} \bfA(s)}_{L^{\infty}_{x}} 
	\leq s^{1/4+k/2} \nrm{\rd_{x}^{(k)} \bfAlow}_{L^{\infty}_{x}}
	+ \int_{s}^{1} (s/s')^{1/4+k/2} (s')^{5/4} \nrm{\nb_{x}^{(k)} \bfF_{s}(s')}_{L^{\infty}_{x}} \, \frac{\ud s'}{s'}.
\end{equation*}

Once we establish
\begin{equation} \label{eq:intEst4A:s:pf:1}
	\sup_{0 < s \leq 1} \nrm{s^{5/4} \nb_{x}^{(k)} \bfF_{s}}_{\calL^{\infty}_{x}} \leq C_{k, \bfE, \calIAlow} \cdot \sqrt{\bfE},
\end{equation}
then proceeding as in the previous case, \eqref{eq:intEst4A:s:0} for $k$ follows, which completes the induction.

Fix $0 < s \leq 1$. Applying \eqref{eq:dSub:u2cov} of Lemma \ref{lem:dSub} and multiplying both sides by $s^{5/4+ k/2}$, we see that
\begin{equation*}
	s^{5/4} \nb_{x}^{(k)} \bfF_{s}(s) = s^{5/4} \calD_{x}^{(k)} \bfF_{s}(s) + \sum_{\star} s^{j/4} \calO(s^{1/4} \nb_{x}^{(\ell_{1})} A, \cdots, s^{1/4} \nb_{x}^{(\ell_{j})} A, s^{5/4} \calD_{x}^{(\ell)} \bfF_{s}),
\end{equation*}
where the range of the summation is as specified in Lemma \ref{lem:dSub}. Let us take the $L^{\infty}_{x}$-norm of both sides; by the triangle inequality and \eqref{eq:pEst4covFs:2} of Corollary \ref{cor:pEst4covFs}, it suffices to control 
\begin{equation*}
	s^{j/4} \nrm{\calO(s^{1/4} \nb_{x}^{(\ell_{1})} A, \cdots, s^{1/4} \nb_{x}^{(\ell_{j})} A, s^{5/4} \calD_{x}^{(\ell)} \bfF_{s})}_{L^{\infty}_{x}}
\end{equation*}
for each summand of $\sum_{\star}$. Observe that we gained an extra power of $s^{1/4}$ for each factor of $A_{i}$ replacing $\rd_{i}$, thanks to the sub-criticality of the problem at hand. We will throw away these extra powers, which is okay as $0 < s \leq 1$. Observe that $0 \leq \ell_{1}, \ldots, \ell_{j} \leq k-1$; therefore, by the induction hypothesis, we have
\begin{equation*}
\nrm{s^{1/4} \nb_{x}^{(\ell_{1})} A(s)}_{L^{\infty}_{x}}, \cdots, \nrm{s^{1/4} \nb_{x}^{(\ell_{j})} A(s)}_{L^{\infty}_{x}} \leq C_{k, \bfE, \calIAlow}.
\end{equation*}

Note furthermore that $\nrm{s^{5/4} \calD_{x}^{(\ell)} \bfF_{s}(s)}_{L^{\infty}_{x}} \leq C_{\ell, \bfE} \cdot \sqrt{\bfE}$ by Corollary \ref{cor:pEst4covFs}. Hence, by H\"older, each summand may be estimated by $C_{k, \bfE, \calIAlow}$, and thus \eqref{eq:intEst4A:s:pf:1} follows.
\end{proof}

As a consequence, we obtain the following corollary which allows us to easily switch estimates for covariant derivatives to those for usual derivatives.

\begin{corollary} [Substitution of covariant derivatives by usual derivatives] \label{cor:cov2u}
Assume that the hypotheses of Proposition \ref{prop:intEst4A:s} holds. Let $\sgm$ be a $\LieAlg$-valued function on $\set{t} \times \bbR^{3} \times (0,1]$, $m \geq 0$ an integer, $b \geq 0, 1 \leq p, r \leq \infty$. Suppose that there exists $D > 0$ such that the estimate
\begin{equation} \label{eq:cov2u:0}
	\nrm{\calD_{x}^{(k)} \sgm}_{\calL^{b, p}_{s} \calL^{r}_{x}} \leq D
\end{equation}
holds for $0 \leq k \leq m$. Then we have
\begin{equation} \label{eq:cov2u:1}
	\nrm{\nb_{x}^{(k)} \sgm}_{\calL^{b, p}_{s} \calL^{r}_{x}} \leq C_{\calIAlow(t), \bfE(t)} \cdot D
\end{equation}
for $0 \leq k \leq \min(m, 30)$.
\end{corollary}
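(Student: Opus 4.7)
The $k=0$ case of \eqref{eq:cov2u:1} is immediate since $\nb_{x}^{(0)} = \calD_{x}^{(0)}$ is just the identity, so the estimate reduces to the hypothesis \eqref{eq:cov2u:0}. For $1 \leq k \leq \min(m,30)$ I would apply the algebraic substitution formula \eqref{eq:dSub:u2cov} of Lemma \ref{lem:dSub},
\begin{equation*}
\rd_{x}^{(k)} \sgm = \covD_{x}^{(k)} \sgm + \sum_{\star} \calO\bigl(\rd_{x}^{(\ell_{1})} A, \ldots, \rd_{x}^{(\ell_{j})} A, \covD_{x}^{(\ell)} \sgm\bigr),
\end{equation*}
where the sum is over $1 \leq j \leq k$ and nonnegative $\ell_{1}, \ldots, \ell_{j}, \ell \leq k-1$ with $j + \ell_{1} + \cdots + \ell_{j} + \ell = k$. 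Multiplying both sides by $s^{k/2}$ and using that relation to distribute the weight, this identity p-normalizes to
\begin{equation*}
\nb_{x}^{(k)} \sgm = \calD_{x}^{(k)} \sgm + \sum_{\star} s^{j/4} \, \calO\bigl(s^{1/4} \nb_{x}^{(\ell_{1})} A, \ldots, s^{1/4} \nb_{x}^{(\ell_{j})} A, \calD_{x}^{(\ell)} \sgm\bigr),
\end{equation*}
and the scalar factor $s^{j/4}$, with $j \geq 1$, is bounded by $1$ on $(0,1]$ and can be discarded.

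Next I would take the $\calL^{b,p}_{s}\calL^{r}_{x}$ norm of both sides. The leading term is bounded by $D$ from \eqref{eq:cov2u:0}. For each error summand, H\"older in $x$ places the $A$-factors in $L^{\infty}_{x}$ and the $\sgm$-factor in $L^{r}_{x}$, and the trivial $L^{\infty}_{s}$-version of Lemma \ref{lem:Holder4Ls} extracts the $A$-factors out of the $\calL^{b,p}_{s}$ norm, giving a bound of the form
\begin{equation*}
C \prod_{i=1}^{j} \nrm{s^{1/4}\nb_{x}^{(\ell_{i})} A}_{L^{\infty}_{s}L^{\infty}_{x}} \cdot \nrm{\calD_{x}^{(\ell)} \sgm}_{\calL^{b,p}_{s}\calL^{r}_{x}}.
\end{equation*}
Since every $\ell_{i} \leq k - 1 \leq 29$, Proposition \ref{prop:intEst4A:s} bounds each $A$-factor by a constant depending only on $\bfE(t)$ and $\calIAlow(t)$, while $\ell \leq k - 1 \leq m$ so the hypothesis \eqref{eq:cov2u:0} bounds the $\sgm$-factor by $D$. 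Summing the finitely many summands yields \eqref{eq:cov2u:1} with the claimed constant.

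\textbf{Expected obstacle.} There is no real analytic difficulty; the heavy lifting, namely the $L^{\infty}_{s}L^{\infty}_{x}$ control of $\nb_{x}^{(\ell_{i})} A$, has already been carried out in Proposition \ref{prop:intEst4A:s}, and the remainder is pointwise algebra together with weight bookkeeping to verify that the p-normalizations balance. The constraint $k \leq 30$ in the conclusion is dictated precisely by the range $\ell_{i} \leq 29$ on which Proposition \ref{prop:intEst4A:s} is applicable, and no induction on $k$ is actually consumed in the argument.
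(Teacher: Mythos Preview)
Your proposal is correct and follows essentially the same approach as the paper's own proof: apply the substitution formula \eqref{eq:dSub:u2cov}, p-normalize to expose the factor $s^{j/4}$ (which is discarded on $(0,1]$), and then use H\"older to place the $A$-factors in $L^{\infty}_{s}L^{\infty}_{x}$ where Proposition \ref{prop:intEst4A:s} applies (this being precisely the source of the restriction $k \leq 30$), leaving the $\calD_{x}^{(\ell)}\sgm$ factor in $\calL^{b,p}_{s}\calL^{r}_{x}$ to be controlled by the hypothesis. The only cosmetic difference is that the paper multiplies by $s^{b+k/2}$ before taking the $\calL^{p}_{s}\calL^{r}_{x}$ norm, whereas you multiply by $s^{k/2}$ and take the $\calL^{b,p}_{s}\calL^{r}_{x}$ norm directly; these are equivalent.
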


\begin{proof} 
We will again omit $t$ in this proof. The case $k = 0$ is obvious; we thus fix $1 \leq k \leq \min(m, 30)$. Using \eqref{eq:dSub:u2cov} of Lemma \ref{lem:dSub} to $\sgm$ and multiplying by $s^{b+k/2}$, we get
\begin{equation*}
	s^{b} \nb_{x}^{(k)} \sgm(s) = s^{b} \calD_{x}^{(k)} \sgm(s) + \sum_{\star} s^{j/4} \calO(s^{1/4} \nb_{x}^{(\ell_{1})} A(s), \cdots, s^{1/4} \nb_{x}^{(\ell_{j})} A(s), s^{b} \calD_{x}^{(\ell)} \sgm(s)),
\end{equation*}
where the range of summation $\sum_{\star}$ is as specified in Lemma \ref{lem:dSub}. Taking the $\calL^{p}_{s} \calL^{r}_{x}$ norm of both sides, applying the triangle inequality and using \eqref{eq:cov2u:0} to estimate $\nrm{s^{b} \calD_{x}^{(k)} \sgm(s)}_{\calL^{p}_{s} \calL^{r}_{x}} = \nrm{\calD_{x}^{(k)} \sgm}_{\calL^{b, p}_{s} \calL^{r}_{x}} \leq D$, we are left to establish
\begin{equation} \label{eq:cov2u:pf:1}
s^{j/4} \nrm{\calO(s^{1/4} \nb_{x}^{(\ell_{1})} A(s), \cdots, s^{1/4} \nb_{x}^{(\ell_{j})} A(s), s^{b} \calD_{x}^{(\ell)} \sgm(s))}_{\calL^{p}_{s} \calL^{r}_{x}} \leq C_{\calIAlow} \cdot D
\end{equation}
for each summand in $\sum_{\star}$. Note that we have an extra power of $s^{j/4}$, which we can just throw away (as $0 < s \leq 1$). Let us use H\"older to put each $s^{1/4} \nb_{x}^{(\ell_{i})} A(s)$ in $\calL^{\infty}_{s} \calL^{\infty}_{x}$ and $s^{b} \calD_{x}^{(\ell)} \sgm(s)$ in $\calL^{p}_{s} \calL^{r}_{x}$. Then using Proposition \ref{prop:intEst4A:s} (this is possible since $k \leq 30$) and \eqref{eq:cov2u:0} to control the respective norms, we obtain \eqref{eq:cov2u:pf:1}. \qedhere
\end{proof}


\section{Transformation to the caloric-temporal gauge and estimates at $t=0$: \\ Proof of Theorem \ref{thm:idEst}} \label{sec:pfOfIdEst}
The purpose of this section is to prove Theorem \ref{thm:idEst}. Let us begin by giving the precise definition of the norm $\calI(t)$, which was mentioned in Section \ref{sec:reduction}.

For a solution $A_{\bfa}$ to \eqref{eq:HPYM} on $I \times \bbR^{3} \times [0,1]$, define $\calI(t) := {}^{(F_{s})} \calI(t) + {}^{(\Alow)} \calI(t)$ $(t \in I)$, where
\begin{align*}
	& {}^{(F_{s})} \calI(t) := \sum_{k=1}^{10} \bb[ \nrm{\nb_{t,x} F_{s}(t)}_{\calL^{5/4,\infty}_{s} \dot{\calH}^{k-1}_{x}} + \nrm{\nb_{t,x} F_{s}(t)}_{\calL^{5/4,2}_{s} \dot{\calH}^{k-1}_{x}}\bb], \\
	& {}^{(\Alow)}\calI(t) := \sum_{k=1}^{31} \nrm{\rd_{t,x} \Alow(t)}_{\dot{H}^{k-1}_{x}}.
\end{align*}

A key ingredient for proving Theorem \ref{thm:idEst} is Theorem \ref{thm:impLWP4dYMHF}, which was proved in the previous section. The remaining analytic ingredients, on the other hand, are mostly contained in \cite[Theorem 4.8]{Oh:6stz7nRe}. Let us give a simplified version of \cite[Theorem 4.8]{Oh:6stz7nRe}, which suffices for our purpose and can be easily read off from the original version.
\begin{theorem}[{\cite[Theorem 4.8]{Oh:6stz7nRe}}, simplified] \label{thm:oldIdEst}
Let $0 < T \leq 1$, and $\Atemp_{\mu}$ be a regular solution to \eqref{eq:hyperbolicYM} in the temporal gauge $\Atemp_{0} = 0$ on $(-T, T) \times \bbR^{3}$ with the initial data $(\Aini_{i}, \Eini_{i})$ at $t=0$. Define $\calIini := \nrm{\Aini}_{\dot{H}^{1}_{x}} + \nrm{\Eini}_{L^{2}_{x}}$. If
\begin{equation} \label{eq:ThmA:hypothesis}
	\sup_{t \in (-T, T)} \nrm{\Atemp(t)}_{\dot{H}^{1}_{x}} < \dlt_{P}
\end{equation}
where $\dlt_{P}$ is a small absolute constant, then the following statements hold.
\begin{enumerate}
\item There exists a regular gauge transform $V = V(t,x)$ on $(-T, T) \times \bbR^{3}$ (in the sense of Definition \ref{def:reg4gt}) and a regular solution $A_{\bfa}$ to \eqref{eq:HPYM} such that 
\begin{equation}  \label{eq:ThmA:0}
	A_{\mu}(s=0) = V (\Atemp_{\mu}) V^{-1} - \rd_{\mu} V V^{-1}.
\end{equation}

\item Furthermore, the solution $A_{\bfa}$ satisfies the caloric-temporal gauge condition, i.e., $A_{s} = 0$ everywhere and $\Alow_{0} = 0$.



\item The following initial data estimate holds.
\begin{equation*} 
	\calI(0) \leq C_{\calIini} \cdot \calIini.
\end{equation*}

\item For $\Vini := V(t=0)$, the following estimates hold.
\begin{equation*} 
	\nrm{\Vini}_{L^{\infty}_{x}} \leq C_{\calIini}, \quad  \nrm{\rd_{x} \Vini}_{L^{3}_{x}} + \nrm{\rd_{x}^{(2)} \Vini}_{L^{2}_{x}} \leq C_{\calIini} \cdot \calIini.
\end{equation*}
	The same estimates with $\Vini$ replaced by $\Vini^{-1}$, respectively, also hold.
\end{enumerate}

\end{theorem}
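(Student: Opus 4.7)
The plan is to decouple the construction into two pieces: first produce a regular solution of \eqref{eq:HPYM} in the \emph{caloric} (but not yet caloric-temporal) gauge on the entire interval $(-T_{0},T_{0})$ by exploiting the smallness of the conserved energy, and then perform an $s$-independent gauge transform to reach the caloric-temporal gauge. In this way Theorem~\ref{thm:oldIdEst} is invoked only at a single instant of time, namely to manufacture the initial gauge transform $\Vini$ and to extract the estimates~\eqref{eq:idEst:1}; the global-in-$t$ extension is driven entirely by the smallness of $\bfE[\overline{\bfF}]<\dlt$, not by the smallness of $\nrm{\Aini}_{\dot{H}^{1}_{x}}$.

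\emph{Step 1.} Apply Theorem~\ref{thm:impLWP4dYMHF} with $I=(-T_{0},T_{0})$ and initial data $\Atemp_{\mu}$. Its energy hypothesis $\sup_{t\in I}\bfE[\overline{\bfF}(t)]<\dlt$ holds because energy is conserved for solutions of \eqref{eq:hyperbolicYM} in the temporal gauge. This produces a regular solution $A'_{\bfa}$ of \eqref{eq:dYMHF} in the caloric gauge on $(-T_{0},T_{0})\times\bbR^{3}\times[0,1]$ with $A'_{\mu}(s=0)=\Atemp_{\mu}$ and with $F'_{\mu\nu}$ regular. \emph{Step 2.} Since $\Atemp\in C_{t}((-T_{0},T_{0}),\dot{H}^{1}_{x})$ and $\nrm{\Aini}_{\dot{H}^{1}_{x}}<\dlt_{P}$, continuity furnishes $T^{\star}\in(0,T_{0})$ with $\sup_{|t|<T^{\star}}\nrm{\Atemp(t)}_{\dot{H}^{1}_{x}}<\dlt_{P}$. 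Apply Theorem~\ref{thm:oldIdEst} on $(-T^{\star},T^{\star})$ to obtain a regular gauge transform $V^{(\mathrm{TA})}$ together with the caloric-temporal version of $A'_{\bfa}$ on that subinterval. Set $\Vini:=V^{(\mathrm{TA})}(t=0)$ and read off all of~\eqref{eq:idEst:1} from parts (3)--(4) of that theorem. \emph{Step 3.} Because $V^{(\mathrm{TA})}$ is $s$-independent, the caloric-temporal condition $\Alow_{0}=0$ at $s=1$ rewrites as the linear ODE $\rd_{t}V^{(\mathrm{TA})}=V^{(\mathrm{TA})}A'_{0}(s=1)$. I define $V$ on all of $(-T_{0},T_{0})$ as the unique solution of this ODE with $V(t=0)=\Vini$; linearity of the ODE yields global existence in $t$, and by ODE uniqueness $V=V^{(\mathrm{TA})}$ on $(-T^{\star},T^{\star})$. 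Setting $A_{\bfa}:=VA'_{\bfa}V^{-1}-\rd VV^{-1}$ (with $\rd_{s}V=0$, so $A_{s}=0$ is preserved and $\Alow_{0}=0$ holds by construction) produces a solution of \eqref{eq:HPYM} in the caloric-temporal gauge on the full interval, and \eqref{eq:idEst:0} is automatic from the gauge-transform formula at $s=0$.

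The main obstacle I anticipate is verifying that $V$ is a \emph{regular} gauge transform, in the sense of Definition~\ref{def:reg4gt}, on the full interval rather than only on $(-T^{\star},T^{\star})$. The tool is Lemma~\ref{lem:est4gt2temporal} applied to the ODE of Step~3 with initial data $\Vini$ controlled by~\eqref{eq:idEst:1}; this reduces matters to bounding $\calA_{0}[A'_{0}(s=1)]((-T_{0},T_{0}))$. Since $\Atemp_{0}=0$ forces $A'_{0}(t,s=0)=0$ for all $t$, the identity $\rd_{s}A'_{0}=F'_{s0}$ and the fundamental theorem of calculus in $s$ give $A'_{0}(t,s=1)=\int_{0}^{1}F'_{s0}(t,s')\,\ud s'$, so Minkowski in $s$ reduces the required $L^{p}_{t}L^{q}_{x}$ bounds on $A'_{0}(s=1)$ and its spatial derivatives to corresponding bounds on $F'_{s0}$. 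Those follow from the covariant parabolic estimates of Proposition~\ref{prop:pEst4covF} and Corollary~\ref{cor:pEst4covFs}, combined with Corollary~\ref{cor:cov2u} to trade covariant derivatives for usual ones (this last step requires a pointwise bound for $\calIAlow(t)$, which I would produce by the same $s$-integration technique bootstrapped against the Sobolev bounds emerging from~\eqref{eq:idEst:1} propagated by~\eqref{eq:hyperbolic4Alow}). Crucially, all of these parabolic estimates are uniform in $t\in(-T_{0},T_{0})$ because they depend only on $\bfE(t)=\bfE[\overline{\bfF}]<\dlt$; this is precisely what licenses Step~3 to run on the entire interval rather than only on the short subinterval where Theorem~\ref{thm:oldIdEst} is directly applicable.
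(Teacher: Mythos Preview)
Your proposal is not a proof of Theorem~\ref{thm:oldIdEst}; it is a proof of Theorem~\ref{thm:idEst}. You invoke Theorem~\ref{thm:oldIdEst} as a black box in Step~2, and the objects and conclusions you aim for (the interval $(-T_{0},T_{0})$, the equations \eqref{eq:idEst:0} and \eqref{eq:idEst:1}) are those of Theorem~\ref{thm:idEst}. Theorem~\ref{thm:oldIdEst} itself is not proved in the present paper: it is quoted from the companion work \cite{Oh:6stz7nRe}, and its proof there is an entirely different analysis (solving \eqref{eq:dYMHF} in the DeTurck gauge on a short $s$-interval under the $\dot{H}^{1}_{x}$ smallness \eqref{eq:ThmA:hypothesis}, then gauge-transforming to the caloric-temporal gauge and reading off parabolic smoothing estimates).

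Read instead as a proof of Theorem~\ref{thm:idEst}, your argument coincides with the paper's almost step for step: use Theorem~\ref{thm:impLWP4dYMHF} to solve \eqref{eq:dYMHF} in the caloric gauge on the full interval; invoke Theorem~\ref{thm:oldIdEst} on a short subinterval to manufacture $\Vini$ and the estimates \eqref{eq:idEst:1}; extend $V$ globally via the ODE $\rd_{t}V=V\,\widetilde{\Alow}_{0}$; and match the two constructions on the overlap through Lemma~\ref{lem:uni4dYMHF} and ODE uniqueness. The one place you diverge is the final paragraph on regularity of $V$. The paper disposes of this in one sentence, because Definition~\ref{def:reg4gt} is \emph{qualitative}: once Theorem~\ref{thm:impLWP4dYMHF} guarantees that $\widetilde{\Alow}_{0}$ is smooth with $\rd_{t,x}\widetilde{\Alow}_{0}$ regular, and Theorem~\ref{thm:oldIdEst} guarantees that $\Vini$ is a regular gauge transform, differentiating the ODE and iterating Gr\"onwall gives all the required memberships directly. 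Your route through Lemma~\ref{lem:est4gt2temporal} and a quantitative bound on $\calA_{0}[A'_{0}(s=1)]$ is unnecessary here, and the bootstrap you sketch for $\calIAlow(t)$ is problematic: Corollary~\ref{cor:cov2u} would need $\calIAlow$ for the caloric-gauge solution $A'$, whereas \eqref{eq:idEst:1} and \eqref{eq:hyperbolic4Alow} pertain to the caloric-temporal solution $A$, which is only defined once $V$ is in hand. Moreover, Lemma~\ref{lem:est4gt2temporal} controls only $V$, $\rd_{t,x}V$, $\rd_{x}^{(2)}V$ and so cannot by itself deliver the $H^{\infty}_{x}$ membership of higher derivatives that Definition~\ref{def:reg4gt} demands.
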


With Theorem \ref{thm:oldIdEst} in hand, we are ready to give a proof of Theorem \ref{thm:idEst}.
 
\begin{proof} [Proof of Theorem \ref{thm:idEst}]
We begin with a regular solution $\Atemp_{\mu}$ to the Yang-Mills equations $\covD^{\mu} F_{\nu \mu} = 0$ in the temporal gauge, defined on $(-T_{0}, T_{0}) \times \bbR^{3}$. Thanks to the regularity assumption, note that $ \Atemp_{\mu} \in C^{\infty}_{t}((-T_{0}, T_{0}), H^{\infty}_{x})$.

\pfstep{Step 1. Construction of regular solution to \eqref{eq:HPYM} in caloric-temporal gauge}
Recall the hypothesis \eqref{eq:idEst:hyp}. By smoothness in $t$ and conservation of energy, respectively, it follows that
\begin{equation} \label{eq:idEst:pf:0}
	\sup_{t \in (- \eps_{0}, \eps_{0})} \nrm{\Atemp}_{\dot{H}^{1}} < \dlt_{P}, \quad \sup_{t \in (-T_{0}, T_{0})}\bfE[\bfF^{\dagger}(t)] < \dlt,
\end{equation}
for some small $\eps_{0} > 0$. The second smallness condition allows us to apply Theorem \ref{thm:impLWP4dYMHF}, from which we obtain a unique regular solution $\widetilde{A}_{\mu}$ to \eqref{eq:dYMHFcal} on $(-T_{0}, T_{0}) \times \bbR^{3} \times [0,1]$. 
Supplied with $\widetilde{A}_{s} = 0$, $\widetilde{A}_{\bfa} = (\widetilde{A}_{\mu}, \widetilde{A}_{s})$ is a solution to \eqref{eq:HPYM}. We will apply a gauge transform $V = V(t,x,s)$ to $\widetilde{A}_{\bfa}$ to enforce the caloric-temporal gauge condition. Let us denote the resulting connection coeffients $A_{\bfa}$, i.e.,
\begin{equation*}
	A_{\bfa} := V \widetilde{A}_{\bfa} V^{-1} - \rd_{\bfa} V V^{-1}.
\end{equation*}

In order for $A_{\bfa}$ to be in the caloric-temporal gauge, we need a gauge transform $V$ which is (i) independent of $s$ (to keep $A_{s} =0$) and (ii) makes $\Alow_{0} = 0$. These two requirements are in fact equivalent (once one assumes enough regularity of $V$) to $V$ solving the ODE
\begin{equation*}
\left\{
\begin{aligned}
& \rd_{t} V = V \, \widetilde{\Alow}_{0}, \\
& V(t=0) = \Vini,
\end{aligned}
\right.
\end{equation*}
where $\widetilde{\Alow}_{0} := \widetilde{A}_{0}(s=1)$ and $\Vini$ is a gauge transform on $\bbR^{3}$, to be specified in Step 2 in accordance with Theorem \ref{thm:oldIdEst}. 

\pfstep{Step 2. Application of Theorem \ref{thm:oldIdEst}}

The next step of the proof is to apply Theorem \ref{thm:oldIdEst} to choose $\Vini$ and furthermore obtain a quantitative estimate for $\calIAlow(0)$. Thanks to the first inequality of \eqref{eq:idEst:pf:0}, we may apply Theorem \ref{thm:oldIdEst} on the time interval $(-\eps_{0}, \eps_{0})$. Let us mark the objects obtained from Theorem \ref{thm:oldIdEst} with a prime, i.e., $A'_{\mu}$, $V'$ and $\Vini'$. Consider $\widetilde{A}'_{\mu} = \widetilde{A}'_{\mu}(t,x,s)$ defined by
\begin{equation*}
	\widetilde{A}'_{\mu} := (V')^{-1} A'_{\mu} V' - \rd_{\mu} (V')^{-1} V',
\end{equation*}
where we remind the reader that $V' = V'(t,x)$, $(V')^{-1} = (V')^{-1}(t,x)$ are independent of $s$.

Note that $\widetilde{A}'_{\mu}$ is a regular solution to \eqref{eq:dYMHF} in the caloric gauge, as is $\widetilde{A}_{\mu}$. Moreover, their initial data sets coincide (both being $\Atemp_{\mu}$). By the uniqueness lemma (Lemma \ref{lem:uni4dYMHF}), we conclude that $\widetilde{A}_{\mu} = \widetilde{A}'_{\mu}$ on the set on which $\widetilde{A}'_{\mu}$ is defined, i.e.,
\begin{equation*}
\widetilde{A}_{\mu} = (V')^{-1} A'_{\mu} V' - \rd_{\mu} (V')^{-1} V'
\end{equation*}
on $(-\eps_{0}, \eps_{0}) \times \bbR^{3} \times [0,1]$. As $\Alow'_{0} =0$, we also see that
\begin{equation*}
\left\{
\begin{aligned}
& \rd_{t} V' = V' \widetilde{\underline{A}}_{0}, \\
& V'(t=0) = \Vini',
\end{aligned}
\right.
\end{equation*}
on $(-\eps_{0}, \eps_{0}) \times \bbR^{3}$. 

At this point, let us make the choice $\Vini = \Vini'$. Then the previous ODE is exactly that satisfied by $V$. Therefore, by uniqueness for ODE with smooth coefficients, $V = V'$ on $(-\eps_{0}, \eps_{0}) \times \bbR^{3}$, and hence we conclude that
\begin{equation*}
	A_{\mu} = A'_{\mu}
\end{equation*}
on $(-\eps_{0}, \eps_{0}) \times \bbR^{3} \times [0,1]$. From Parts (3), (4) of Theorem \ref{thm:oldIdEst}, the quantitative estimates in \eqref{eq:idEst:1} follow. 
Moreover, it is not difficult to show that $V$ is a regular gauge transform on $(-T_{0}, T_{0}) \times \bbR^{3}$. It also follows that $A_{\mu}$ is a regular solution on $(-T_{0}, T_{0}) \times \bbR^{3} \times [0,1]$, since $\widetilde{A}_{\mu}$ was regular. This completes the proof of Theorem \ref{thm:idEst}. \qedhere
\end{proof}



\section{Fixed-time estimates by $\bfE$ : Proof of Theorem \ref{thm:ctrlByE}}
Our aim in this section is to prove Theorem \ref{thm:ctrlByE}. Splitting $\calI(t) = \calIAlow(t) + \calIFs(t)$, we will reduce the theorem to establishing two inequalities, namely \eqref{eq:ctrlByE:Alw:0} and \eqref{eq:ctrlByE:Fs:0} of Propositions \ref{prop:ctrlByE:Alw} and \ref{prop:ctrlByE:Fs}, respectively. 

Throughout this section, we will be concerned with a regular solution $A_{\bfa}$ to \eqref{eq:HPYM} in the caloric-temporal gauge on $(-T_{0}, T_{0}) \times \bbR^{3} \times [0,1]$ ($T_{0} > 0$), which satisfies $\calI(0) = \calIAlow(0) +  \calIFs(0) < \infty$ and $\bfE[\overline{\bfF}] < \infty$. Then by conservation of energy for the hyperbolic Yang-Mills equations at $s=0$, we see that
\begin{equation*}
	\bfE[\bfF(t, s=0)] = \bfE[\overline{\bfF}] \quad \forall t \in (-T_{0}, T_{0}).
\end{equation*}

We will denote the common value of $\bfE[\bfF(t, s=0)]$ by $\bfE$.

\begin{proposition} \label{prop:ctrlByE:Alw}
There exists $N > 0$ such that for any $t \in (-T_{0}, T_{0})$, we have
\begin{equation} \label{eq:ctrlByE:Alw:0}
	\calIAlow(t) \leq C_{\calIAlow(0), \bfE} \, (1+\abs{t})^{N}.
\end{equation}
\end{proposition}

\begin{remark}
In the course of the proof, it will be clear that $N$ may be chosen to depend only on the number of derivatives of $\Alow_{i}$ controlled. In our case, in which we control up to 31 derivatives of $\Alow_{i}$, we may choose $N = 32$.
\end{remark}

\begin{proof} 
By symmetry, it suffices to consider $t > 0$. The main idea is to use the relation
\begin{equation} \label{eq:ctrlByE:Alw:pf:0}
	\Flow_{0i} = \rd_{t} \Alow_{i},
\end{equation}
which holds thanks to the fact that we are in the temporal gauge $\Alow_{0} = 0$ along $s=1$, and proceed as in the proof of Proposition \ref{prop:intEst4A:s}.

We first estimate the $L^{\infty}_{x}$ norms. We claim that
\begin{equation} \label{eq:ctrlByE:Alw:pf:1}
	\nrm{\rd_{x}^{(k)} \Alow(t)}_{L^{\infty}_{x}} \leq C_{k, \calIAlow(0), \bfE} \, (1+t)^{k+1}
\end{equation}
for $0 \leq k \leq 29$.  

Let us begin with the case $k=0$ and proceed by induction. Note the inequality
\begin{equation*}
\nrm{\Alow_{i}(t)}_{L^{\infty}_{x}} 
\leq \nrm{\Alow_{i}(t=0)}_{L^{\infty}_{x}} + \int_{0}^{t} \nrm{\Flow_{0i}(t')}_{L^{\infty}_{x}} \, \ud t'.
\end{equation*}

Using Proposition \ref{prop:pEst4covF}, we may estimate the last term by $C_{\bfE} \, t$; from this, the $k=0$ case of \eqref{eq:ctrlByE:Alw:pf:1} follows.

Next, to carry out the induction, let us assume that \eqref{eq:ctrlByE:Alw:pf:1}  holds for $0, 1, \ldots, k-1$, where $1 \leq k \leq 29$. Taking $\rd_{x}^{(k)}$ of both sides of \eqref{eq:ctrlByE:Alw:pf:0} and using the fundamental theorem of calculus, we obtain
\begin{equation*}
\nrm{\rd_{x}^{(k)} \Alow_{i}(t)}_{L^{\infty}_{x}} 
\leq \nrm{\rd_{x}^{(k)} \Alow_{i}(t=0)}_{L^{\infty}_{x}} + \int_{0}^{t} \nrm{\rd_{x}^{(k)} \Flow_{0i}(t')}_{L^{\infty}_{x}} \, \ud t'.
\end{equation*}

The first term is estimated by $\calIAlow(t=0)$, as $1 \leq k \leq 29$. For the second term, we apply \eqref{eq:dSub:u2cov} of Lemma \ref{lem:dSub}. Then it suffices to estimate
\begin{equation*}
	\int_{0}^{t} \bb( \nrm{\covD_{x}^{(k)} \Flow_{0i}(t')}_{L^{\infty}_{x}} + \sum_{\star} \nrm{\calO(\rd_{x}^{(\ell_{1})} A, \ldots, \rd_{x}^{(\ell_{j})} A, \covD_{x}^{(\ell)} \Flow_{0i})(t')}_{L^{\infty}_{x}} \bb) \, \ud t'
\end{equation*}
where the range of the summation $\sum_{\star}$ is as in Lemma \ref{lem:dSub}. In particular, $\ell_{1}, \ldots, \ell_{j}, \ell \leq k-1$. Let us use H\"older to estimate each factor in $L^{\infty}_{x}$, and estimate the derivatives of $A$ and $\Flow_{0i}$ by the induction hypothesis and Propostion \ref{prop:pEst4covF}, respectively. Then it is not difficult to see that the worst term (in terms of growth in $t$) is of the size
\begin{equation*}
	 C_{k, \calIAlow(0), \bfE} \int_{0}^{t} (1+t')^{k} \, \ud t' =  C_{k, \calIAlow(0), \bfE} \, (1+t)^{k+1}.
\end{equation*}

Therefore, \eqref{eq:ctrlByE:Alw:pf:1} for $k$ follows. By induction, this establishes the claim.

With \eqref{eq:ctrlByE:Alw:pf:1} in hand, we now proceed to prove
\begin{equation} \label{eq:ctrlByE:Alw:pf:2}
	\nrm{\rd_{x}^{(k)}  \Alow(t)}_{L^{2}_{x}} \leq C_{k, \calIAlow(0), \bfE} \, (1+t)^{k+1}
\end{equation}
for $1 \leq k \leq 31$.  

Arguing as in the proof of \eqref{eq:ctrlByE:Alw:pf:1}, we arrive at the inequality
\begin{equation*}
\begin{aligned}
\nrm{\rd_{x}^{(k)} \Alow_{i}(t)}_{L^{2}_{x}} 
\leq & \nrm{\rd_{x}^{(k)} \Alow_{i}(t=0)}_{L^{2}_{x}} \\
& + \int_{0}^{t} \bb( \nrm{\covD_{x}^{(k)} \Flow_{0i}(t')}_{L^{2}_{x}} + \sum_{\star} \nrm{\calO(\rd_{x}^{(\ell_{1})} A, \ldots, \rd_{x}^{(\ell_{j})} A, \covD_{x}^{(\ell)} \Flow_{0i})(t')}_{L^{2}_{x}} \bb) \, \ud t'.
\end{aligned}
\end{equation*}

It suffices to estimate the $t'$-integral. For $1 \leq k \leq 30$, let us use H\"older to estimate each $\rd_{x}^{(\ell_{i})} A$ in $L^{\infty}_{x}$ and $\covD^{(\ell)}_{x} \Flow_{0i}$ in $L^{2}_{x}$. Then we estimate these by \eqref{eq:ctrlByE:Alw:pf:1} and Proposition \ref{prop:pEst4covF}, respectively, from which \eqref{eq:ctrlByE:Alw:pf:2} follows immediately for $1 \leq k \leq 30$. 

Next, proceeding similarly in the case $k=31$, all terms are easily seen to be okay except
\begin{equation*}
	\nrm{\calO(\rd_{x}^{(30)} A, \Flow_{0i})(t')}_{L^{2}_{x}},
\end{equation*}
for which we cannot use \eqref{eq:ctrlByE:Alw:pf:1}. In this case, however, we may put $\rd_{x}^{(30)} A$ in $L^{2}_{x}$ and $\Flow_{0i}$ in $L^{\infty}_{x}$. Then the former can be estimated by using the case $k=30$ of \eqref{eq:ctrlByE:Alw:pf:2} that we have just established, whereas the estimate for the latter follows from Proposition \ref{prop:pEst4covF}. It follows that this term is of size $C_{k, \calIAlow(0), \bfE} \, (1+t')^{31}$. Integrating over $[0,t]$ gives the growth $C_{k, \calIAlow(0), \bfE} \, (1+t)^{32}$.

Finally, we are left to prove the estimates for $\rd_{t} \Alow_{i}$. For this purpose, we claim
\begin{equation} \label{eq:ctrlByE:Alw:pf:3}
	\nrm{\rd_{x}^{(k-1)} \rd_{t} \Alow(t)}_{L^{2}_{x}} \leq C_{k, \calIAlow(0), \bfE} \, (1+t)^{k-1}
\end{equation}
for $1 \leq k \leq 30$. 

To prove \eqref{eq:ctrlByE:Alw:pf:3}, recall that $\rd_{t} \Alow_{i} = \Flow_{0i}$; therefore, the case $k=1$ follows immediately from Proposition \ref{prop:pEst4covF}. For $k > 1$, we take $\rd_{x}^{(k-1)}$ and use Lemma \ref{lem:dSub} to substitute the usual derivatives by covariant derivatives. Then by \eqref{eq:ctrlByE:Alw:pf:1}, \eqref{eq:ctrlByE:Alw:pf:2} and Proposition \ref{prop:pEst4covF}, \eqref{eq:ctrlByE:Alw:pf:3} follows. 

Combining \eqref{eq:ctrlByE:Alw:pf:2} and \eqref{eq:ctrlByE:Alw:pf:3}, we obtain \eqref{eq:ctrlByE:Alw:0} with $N = 32$. \qedhere
\end{proof}

\begin{proposition} \label{prop:ctrlByE:Fs}
For any $t \in (-T_{0}, T_{0})$, we have
\begin{equation} \label{eq:ctrlByE:Fs:0}
	\calIFs(t) \leq C_{\calIAlow(t), \bfE} \cdot \sqrt{\bfE}.
\end{equation}
\end{proposition}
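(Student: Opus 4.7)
The plan is to bound $\calIFs(t)$, the $\calL^{5/4,\infty}_s$ and $\calL^{5/4,2}_s$ norms of $\nb_{t,x} F_{si}(t)$ in $\dot H^{k-1}_x$, by separating spatial and temporal derivatives. The spatial derivative part will follow directly from the covariant parabolic estimates of Corollary \ref{cor:pEst4covFs}; the temporal derivative part will be reduced to spatial quantities by the Bianchi identity and the covariant heat equation for $F_{0i}$. Throughout I will assume $\bfE < \dlt$ (the small constant of Proposition \ref{prop:pEst4covF}); although this is not stated in the hypothesis of the proposition, it always holds whenever the conclusion is invoked, thanks to the scaling carried out in the proof of the Main Theorem.

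For the spatial part, Corollary \ref{cor:pEst4covFs} yields, for every integer $k \geq 0$,
\[
	\nrm{\calD_x^{(k)} \bfF_s(t)}_{\calL^{5/4,\infty}_s \calL^2_x(0,1]} + \nrm{\calD_x^{(k)} \bfF_s(t)}_{\calL^{5/4,2}_s \calL^2_x(0,1]} \leq C_{k, \bfE} \sqrt{\bfE}.
\]
Applying Corollary \ref{cor:cov2u} with $b = 5/4$, $r = 2$ and $p \in \{\infty, 2\}$ --- whose remaining hypothesis $\calIAlow(t) < \infty$ is supplied by the already-established Proposition \ref{prop:ctrlByE:Alw} --- converts $\calD_x^{(k)}$ into $\nb_x^{(k)}$ at the price of a constant of the form $C_{k, \calIAlow(t), \bfE}$. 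Reinterpreting through the Correspondence Principle, this controls the contribution of $\nb_x F_{si}$ to $\calIFs(t)$ for $1 \leq k \leq 10$.

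For the temporal derivative, the Bianchi identity \eqref{eq:fullBianchi} applied to the index triple $(t, s, i)$ gives
\[
	\covD_t F_{si} = \covD_s F_{0i} + \covD_i F_{s0}.
\]
Since $A_s = 0$ makes $\covD_s = \rd_s$, and \eqref{eq:BWeq:1} yields $\rd_s F_{0i} = \covD^\ell \covD_\ell F_{0i} - 2 \LieBr{F_0{}^\ell}{F_{i\ell}}$, substitution produces
\[
	\covD_t F_{si} = \covD^\ell \covD_\ell F_{0i} - 2 \LieBr{F_0{}^\ell}{F_{i\ell}} + \covD_i F_{s0},
\]
expressing $\covD_t F_{si}$ entirely in spatial covariant derivatives of $\bfF$ (controlled by Proposition \ref{prop:pEst4covF}) and $\bfF_s$ (controlled by Corollary \ref{cor:pEst4covFs}), together with a quadratic term in $\bfF$ handled by H\"older and Lemma \ref{lem:Holder4Ls}. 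The identity $\rd_t F_{si} = \covD_t F_{si} - \LieBr{A_0}{F_{si}}$, combined with the $L^\infty_x$-type estimate for $A_0$ from Proposition \ref{prop:intEst4A:s}, then delivers the bound on $\rd_t F_{si}$.

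The higher-order estimates are obtained by applying $\rd_x^{(k-1)}$ to the displayed identity for $\covD_t F_{si}$, using Leibniz's rule and Lemma \ref{lem:dSub} to interchange usual and covariant derivatives, and invoking Corollary \ref{cor:cov2u} a final time to package the result in the $\nb_x$ form required by $\calIFs$. The main obstacle is bookkeeping of constants: one must eliminate every $\rd_t$ derivative via Bianchi \emph{before} taking further $\rd_x$ derivatives, so that no uncontrolled time derivative of $\bfF$ or $\bfA$ ever appears, and the resulting constants depend only on $\calIAlow(t)$ and $\bfE$. Once this is carried out cleanly, \eqref{eq:ctrlByE:Fs:0} follows.
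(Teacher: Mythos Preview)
Your proposal is correct and follows essentially the same approach as the paper: the spatial part via Corollary~\ref{cor:pEst4covFs} and Corollary~\ref{cor:cov2u}, and the temporal part via the Bianchi identity combined with the covariant parabolic equation \eqref{eq:BWeq:1} for $F_{0i}$, together with Proposition~\ref{prop:intEst4A:s} to handle the $A_0$ commutator. The only cosmetic difference is that the paper applies $\covD_x^{(k-1)}$ to the identity for $\nb_0 F_{si}$ (with the $A_0$ term already absorbed) and then invokes Corollary~\ref{cor:cov2u} once at the end, which is slightly cleaner than your route of taking $\rd_x^{(k-1)}$ and repeatedly interchanging via Lemma~\ref{lem:dSub}.
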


\begin{proof} 
Throughout the proof, the time $t \in (-T_{0}, T_{0})$ will be fixed and thus be omitted. 

Recalling the definition of $\calIFs$, establishing \eqref{eq:ctrlByE:Fs:0} reduces to proving
\begin{align} 
\nrm{\nb_{x} F_{s}}_{\calL^{5/4,p}_{s} \dot{\calH}^{k-1}_{x}} \leq & C_{k, \calIAlow, \bfE} \cdot \sqrt{\bfE}, \label{eq:ctrlByE:Fs:pf:1} \\
\nrm{\nb_{0} F_{s}}_{\calL^{5/4,p}_{s} \dot{\calH}^{k-1}_{x}} \leq & C_{k, \calIAlow, \bfE} \cdot \sqrt{\bfE} \label{eq:ctrlByE:Fs:pf:2}
\end{align}
for $1 \leq k \leq 10$ and $p=2, \infty$.

The estimate \eqref{eq:ctrlByE:Fs:pf:1} is an easy consequence of \eqref{eq:pEst4covFs:1} of Corollary \ref{cor:pEst4covFs} and Corollary \ref{cor:cov2u}. On the other hand, to prove \eqref{eq:ctrlByE:Fs:pf:2}, we use the formula
\begin{equation*} 
	\nb_{0} F_{si} = s^{-1/2} \calD^{\ell} \calD_{\ell} F_{0i} - 2 s^{1/2} \LieBr{\tensor{F}{_{0}^{\ell}}}{F_{i\ell}} + \calD_{i} F_{s0} - s^{1/2} \LieBr{A_{0}}{F_{si}},
\end{equation*}
which is an easy consequence of the Bianchi identity \eqref{eq:fullBianchi} and the parabolic equation for $\covD_{s} F_{0i}$. Taking $\covD_{x}^{(k-1)}$ of both sides and using Proposition \ref{prop:pEst4covF}, Corollary \ref{cor:pEst4covFs} and Proposition \ref{prop:intEst4A:s}, we obtain
\begin{equation} \label{eq:pEst4covD0Fsi}
	\nrm{\calD_{x}^{(k-1)} \nb_{0} F_{s i}}_{\calL^{5/4, \infty}_{s} \calL^{2}_{x}(0,1]} + \nrm{\calD_{x}^{(k-1)} \nb_{0} F_{s i}}_{\calL^{5/4, 2}_{s} \calL^{2}_{x}(0,1]} \leq C_{k, \bfE} \cdot \sqrt{\bfE}
\end{equation}
for $k \geq 1$. At this point, applying Corollary \ref{cor:cov2u}, we obtain \eqref{eq:ctrlByE:Fs:pf:2}. \qedhere 
\end{proof}

Combining Propositions \ref{prop:ctrlByE:Alw} and \ref{prop:ctrlByE:Fs}, Theorem \ref{thm:ctrlByE} follows.


\section{Short time estimates for (HPYM) in the caloric-temporal gauge: \\ Proof of Theorem \ref{thm:dynEst}} \label{sec:pfOfDynEst}
The goal of this section is to prove Theorem \ref{thm:dynEst}. As discussed in Section \ref{sec:reduction}, this theorem follows from a local-in-time analysis of the wave equations of \eqref{eq:HPYM}. As such, its proof will follow closely that of \cite[Theorem 4.9]{Oh:6stz7nRe}, which is essentially a `$H^{1}_{x}$ local well-posedness (in time)' statement for \eqref{eq:HPYM} in the caloric-temporal gauge.

To begin with, let us borrow the following definition from \cite{Oh:6stz7nRe}.
\begin{equation} \label{eq:calE}
	\calE(t) := \sum_{m=1}^{3} \bb( \nrm{\nb_{x}^{(m-1)} F_{s0}(t)}_{\calL^{1, \infty}_{s} \calL^{2}_{x}(0,1]} + \nrm{\nb_{x}^{(m)} F_{s0}(t)}_{\calL^{1, 2}_{s} \calL^{2}_{x}(0,1]} \bb).
\end{equation}

Given a time interval $I \subset \bbR$, we define $\calE(I)$ to be $\sup_{t \in I} \calE(t)$.

Moreover, also borrowing from \cite{Oh:6stz7nRe}, we assert the existence of norms $\calF(I), \calAlow(I)$ of $F_{si}, \Alow_{i}$, respectively, such that the following lemma holds.
\begin{lemma} \label{lem:ingr4dynEst}
Let $I \subset \bbR$ be a finite open interval centered at $t=0$, and $A_{\bfa}$ a regular solution to \eqref{eq:HPYM} in the caloric-temporal gauge on $I \times \bbR^{3} \times [0,1]$. Then:
\begin{enumerate}
\item The following estimates hold.
\begin{equation*}
\calA_{0}[A_{0}(s=0)](I) \leq  C_{\calF(I), \calAlow(I)} \cdot \calE(I) + C_{\calF(I), \calAlow(I)} \cdot (\calF(I) + \calAlow(I))^{2}, 
\end{equation*}
\begin{equation*}
\sup_{i} \sup_{s \in [0,1]} \nrm{\rd_{t,x} A_{i}(s)}_{L^{\infty}_{t} L^{2}_{x} (I)} \leq C_{\calF(I), \calAlow(I)} \cdot (\calF(I) + \calAlow(I)).
\end{equation*}

\item The norms $\calF{(-T, T)}$ and $\calAlow{(-T, T)}$ are continuous as functions of $T$ (where $0 < T < T_{0}$). Furthermore, we have
\begin{align*}
	\limsup_{T \to 0+} \bb( \calF{(-T, T)} + \calAlow{(-T, T)} \bb) \leq C \, \calI, \\
	\limsup_{T \to 0+} \bb( \dlt \calF{(-T, T)} + \dlt \calAlow{(-T, T)} \bb) \leq C \, \dlt \calI.
\end{align*}

\item The following estimates hold for $\calAlow$:
\begin{equation*}
	\calAlow(I) \leq C \calI(0) + \abs{I} \bb( C_{\calF(I), \calAlow(I)} \cdot \calE(I) + C_{\calE(I), \calF(I), \calAlow(I)} \cdot (\calE(I) + \calF(I) + \calAlow(I))^{2} \bb).
\end{equation*}

\item The following estimates hold for $\calF$:
\begin{equation*}
	\calF(I) \leq  C \calI(0) + \abs{I}^{1/2} C_{\calE(I), \calF(I), \calAlow(I)} \cdot (\calE(I) + \calF(I) + \calAlow(I))^{2}
\end{equation*}
\end{enumerate}
\end{lemma}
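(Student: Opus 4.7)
The plan is to treat the four parts separately, exploiting the structural identities forced by the caloric-temporal gauge conditions $A_s \equiv 0$ and $\Alow_0 \equiv 0$, and coupling these to energy/Strichartz-type estimates for the wave equations derived in \eqref{eq:covParabolic4w}--\eqref{eq:hyperbolic4Alow}.

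For Part (1), the caloric condition $A_s = 0$ reduces the formula \eqref{eq:intro:Fsi} to $F_{si} = \rd_s A_i$, and likewise $F_{s0} = \rd_s A_0$. Combined with $\Alow_0 \equiv 0$, the fundamental theorem of calculus yields
\begin{equation*}
	A_0(t, x, 0) = -\int_0^1 F_{s0}(t,x, s)\, \ud s, \qquad A_i(t,x,s) = \Alow_i(t,x) - \int_s^1 F_{si}(t,x,s')\, \ud s'.
\end{equation*}
Each of the five components of $\calA_0[A_0(s=0)](I)$ then reduces, by Minkowski in $s$ followed by H\"older in $t$ and Sobolev in $x$, to a mixed-norm bound on $F_{s0}$ — precisely what the $\calE(I)$ norm delivers. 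The quadratic $\calF \cdot \calAlow$ correction enters when passing from homogeneous to inhomogeneous Lebesgue norms (e.g. $L^3_x$, $L^\infty_x$), where one must use the second identity above together with the existing control on $A_i$ in terms of $\calF + \calAlow$. The bound on $\sup_s \nrm{A_i(s)}_{L^\infty_t L^2_x(I)}$ is immediate from the integral formula for $A_i(s)$.

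Part (2) is a routine continuity-in-$T$ statement. Since $A_\bfa$ is regular, all the mixed-norm quantities that build up $\calF$ and $\calAlow$ are defined through $L^p_t$-integrals of continuous $L^q_x$-valued functions, so dominated convergence ensures $T \mapsto \calF(-T,T)$ and $T \mapsto \calAlow(-T,T)$ are continuous. The $T \to 0^+$ limit isolates the $t=0$ slice, and by construction $\calI(0)$ majorises exactly the spatial norms of $\rd_{t,x} F_{si}(0)$ and $\rd_{t,x} \Alow_i(0)$ that appear in $\calF$ and $\calAlow$; the same argument with differences of two solutions produces the analogous $\dlt \calI$ estimate.

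For Part (3), I would use \eqref{eq:hyperbolic4Alow}, namely $\covDlow^\mu \Flow_{\nu\mu} = \wlow_\nu$, in the temporal gauge $\Alow_0 = 0$. This is a wave system for $\Alow_i$ with source $\wlow$ plus the cubic-and-higher commutator terms arising from expanding $\covDlow$. A standard energy estimate on $I$ produces a factor of $\abs{I}$ in front of every nonlinear contribution. The source $\wlow$ is controlled by $\calE(I)$ — note $\wlow_0 = -F_{s0}(s=1)$ in the caloric gauge, and $\wlow_i$ is the terminal value of the covariant heat equation \eqref{eq:covParabolic4w} whose data at $s=0$ vanish — and the cubic tails are absorbed using Part (1) and Sobolev. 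Adding the initial data contribution $C\calI(0)$ from $\Alow_i(0)$ and $\rd_t \Alow_i(0) = \Flow_{0i}(0)$ completes the bound.

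Part (4) is the substantive step and carries the main difficulty. Starting from \eqref{eq:hyperbolic4F}, which is a wave equation for $F_{si}$ with quadratic and cubic right-hand side, I would run Strichartz/$X^{s,b}$ estimates on a slab of length $\abs{I}$. Most nonlinear terms split cleanly into factors that are each controlled by $\calE + \calF + \calAlow$ and contribute an $\abs{I}^{1/2}$ gain through Strichartz. The genuine obstacle is the term $2\LieBr{A_\ell - \Alow_\ell}{\rd^\ell F_{si}}$, which fails to close by Strichartz alone. Here the plan is the null-form argument of \S\ref{subsec:intro:lwp}: write
\begin{equation*}
	A_\ell - \Alow_\ell = -\int_0^1 F_{s\ell}(s')\, \ud s',
\end{equation*}
using the caloric condition, and exploit the covariant divergence-free identity $\covD^\ell F_{s\ell} = 0$ to rewrite the bilinear interaction schematically as a linear combination of $Q_{jk}(\abs{\rd_x}^{-1}(\,\cdot\,), F_{si})$ modulo lower-order errors. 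The null-form/$X^{s,b}$ bilinear bounds of Klainerman-Machedon then yield the $\abs{I}^{1/2}$-small nonlinear contribution. The hardest part to verify rigorously, and where the bulk of the underlying work from \cite{Oh:6stz7nRe} is invoked, is that the covariant Hodge-type splitting genuinely produces a null form at leading order and that the commutator errors from replacing $\covD^\ell$ by $\rd^\ell$ satisfy Strichartz-compatible bounds — this is where the polynomial dependence on $\calE + \calF + \calAlow$ arises.
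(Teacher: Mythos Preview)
Your outline is consistent with the paper's own treatment, but you should know that the paper does not actually prove this lemma: its ``proof'' is a single sentence citing \cite[Propositions 7.1, 7.2, 7.4]{Oh:6stz7nRe} and \cite[Theorems D, E]{Oh:6stz7nRe}. The lemma is presented as a summary of results established in the companion paper, where the norms $\calF(I)$ and $\calAlow(I)$ are defined and the wave-equation analysis is carried out in full. So there is nothing to compare against in this paper beyond the overview in \S\ref{subsec:intro:lwp}, which your sketch follows closely.

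That said, a few points in your sketch are slightly off and would need care if you tried to reconstruct the argument. In Part (1), your explanation for the quadratic term $(\calF+\calAlow)^2$ is not quite right: the integral formula for $A_0(s=0)$ involves only $F_{s0}$, and the $L^3_x$, $L^\infty_x$ norms of $A_0$ follow from $\calE$ alone via Sobolev (the weight $s^{1/4}$ on $\nrm{F_{s0}(s)}_{L^2_x}$ is integrable). The quadratic contribution more plausibly arises from the $L^1_t$-Strichartz-type components of $\calA_0$, which cannot be bounded by the fixed-time quantity $\calE(I)$ alone without picking up a factor of $\abs{I}$; controlling them without that factor requires input from the wave equations for $F_{si}$ and $\Alow_i$, hence the dependence on $\calF, \calAlow$. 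In Part (3), note that the gain is a full factor of $\abs{I}$ (not $\abs{I}^{1/2}$), consistent with an energy estimate on a wave equation whose nonlinearity is placed in $L^1_t L^2_x$; your sketch says ``energy estimate'' but then invokes Sobolev for the tails, which is fine provided you track that the smoothed data $\Alow$ lives at high regularity so Strichartz is unnecessary here. Part (4) is where the real work lies, and your identification of the null-form step as the crux, together with the covariant-divergence-free structure of $F_{s\ell}$, matches exactly the mechanism described in \S\ref{subsec:intro:lwp}; this is the content of \cite[Theorem E]{Oh:6stz7nRe}.
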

\begin{proof} 
This is essentially a summary of \cite[Propositions 7.1, 7.2, 7.4]{Oh:6stz7nRe} and \cite[Theorems 7.5, 7.6]{Oh:6stz7nRe}. \qedhere 
\end{proof}

The main reason why the analysis in \cite{Oh:6stz7nRe} is insufficient to prove Theorem \ref{thm:dynEst} is because of \cite[Proposition 7.3]{Oh:6stz7nRe}, which gives an estimate for $\calE(t)$ \emph{only under the hypothesis} that either the size of the initial data or the $s$-interval is small. We remark that the latter case is not explicitly treated in \cite{Oh:6stz7nRe}, but follows by a scaling argument. The following proposition is a replacement of \cite[Proposition 7.3]{Oh:6stz7nRe}, which utilizes the smallness of the conserved energy $\bfE(t)$ instead, based on the covariant parabolic estimates derived in Section \ref{sec:covParabolic}.

\begin{proposition} \label{prop:impEst4Fs0}
Let $I \subset \bbR$ be an open interval and $t \in I$. Consider a regular solution $A_{\bfa}$ to \eqref{eq:HPYM} in the caloric-temporal gauge on $I \times \bbR^{3} \times [0,1]$ such that
\begin{equation*}
	\bfE(t) = \bfE[\bfF(t, s=0)] < \dlt, \quad \calIAlow(t) \leq D,
\end{equation*}
where $D > 0$ is an arbitrarily large number and $\dlt > 0$ is the small constant in Proposition \ref{prop:pEst4covF}. Then the following estimate holds.
\begin{equation} \label{eq:a0First:low:0}
	\calE(t) \leq C_{D, \dlt}.
\end{equation}
\end{proposition}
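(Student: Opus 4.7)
The plan is to derive, then solve, the covariant parabolic equation satisfied by $F_{s0}$ and its covariant derivatives, crucially exploiting the constraint-induced vanishing $F_{s0}(t, s=0) = 0$, and then to convert covariant-derivative estimates into the usual-derivative estimates that define $\calE(t)$.

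First, I would observe that, by the Bianchi identity applied to the triple $(s, 0, i)$ together with the equation $F_{s\mu} = \covD^\ell F_{\ell\mu}$ of (dYMHF), the component $F_{s0}$ satisfies the covariant parabolic equation
\begin{equation*}
(\covD_s - \covD^\ell \covD_\ell) F_{s0} = -2 \LieBr{\tensor{F}{_s^\ell}}{F_{0\ell}},
\end{equation*}
and $\covD_x^{(k)} F_{s0}$ satisfies analogous equations obtained by commuting $\covD_x$ through the heat operator, schematically of the form $\sum \calO(\covD_x^{(j)} \bfF_s, \covD_x^{(k-j)} \bfF) + \sum \calO(\covD_x^{(j)} \bfF, \covD_x^{(k-j)} \bfF)$. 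The constraint equation $\covD^\mu F_{\mu 0}|_{s=0} = 0$, imposed along $\{s=0\}$ by (HPYM), combined with $F_{s0} = \covD^\ell F_{\ell 0}$ from (dYMHF), yields the crucial vanishing $F_{s0}(t, s=0) = 0$; commuting, the same holds for $\covD_x^{(k)} F_{s0}$ for all $k$.

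Next, I would apply the energy integral inequality (Lemma \ref{lem:pEst4covHeat}) to each $\covD_x^{(m-1)} F_{s0}$ with weight $\ell = (m+1)/2$ chosen to match the weight appearing in the definition of $\calE$. Thanks to the vanishing of the initial data at $s=0$ (and the regularity assumption, which ensures $s_1^\ell \nrm{\covD_x^{(m-1)} F_{s0}(s_1)}_{\calL^2_x(s_1)} \to 0$ as $s_1 \to 0$), the boundary term drops out. The nonlinear RHS is then estimated using Proposition \ref{prop:pEst4covF} and Corollary \ref{cor:pEst4covFs}, which bound covariant derivatives of $\bfF$ and $\bfF_s$ by $C_{k, \bfE} \sqrt{\bfE}$, combined with H\"older's inequality, the covariant Sobolev inequalities of Corollary \ref{cor:covSob}, and the Correspondence Principle paired with Lemma \ref{lem:Holder4Ls} for bookkeeping of $s$-weights.

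The hard part will be handling the self-referential term $C(\ell - 3/4) \nrm{\cdot}_{\calL^{\ell, 2}_s \calL^2_x}$ on the RHS of Lemma \ref{lem:pEst4covHeat}, which is strictly positive since $\ell \geq 1 > 3/4$ and cannot be absorbed by a trivial embedding (the measure $\ud s/s$ is infinite on $(0,1]$). My plan is to bootstrap: first establish an \emph{a priori} $\calL^{\ell, 2}_s \calL^2_x$ bound on $\covD_x^{(m-1)} F_{s0}$ via the pointwise Duhamel inequality of Corollary \ref{cor:heatIneq4Duhamel} (obtained from Lemma \ref{lem:heatIneq4abs}) combined with the Duhamel integral estimate of Lemma \ref{lem:est4Duhamel}, which is naturally tailored for $\calL^{b, 2}_s \calL^2_x$ estimates with vanishing initial data; then feed this bound back into the energy inequality to obtain the $\calL^{\ell, \infty}_s \calL^2_x$ estimate. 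Having collected all covariant-derivative bounds, I would finish by invoking Corollary \ref{cor:cov2u} to convert $\calD_x^{(k)} F_{s0}$ estimates into the corresponding $\nb_x^{(k)} F_{s0}$ estimates; this conversion introduces the dependence on $\calIAlow(t) \leq D$ via the $L^\infty_x$-bounds on iterated derivatives of $\bfA$ furnished by Proposition \ref{prop:intEst4A:s}, yielding the final estimate $\calE(t) \leq C_{D, \dlt}$.
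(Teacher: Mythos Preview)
Your overall strategy is correct and matches the paper's: derive the covariant parabolic equation for $F_{s0}$, exploit the vanishing $F_{s0}(s=0)=0$ coming from the constraint equation, obtain covariant-derivative bounds, and then convert to usual derivatives via Corollary~\ref{cor:cov2u}. The final conversion step, which brings in the $\calIAlow(t)\le D$ dependence, is exactly right.

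The one place where your plan diverges from the paper, and where it becomes unnecessarily awkward, is in handling the self-referential term $C(\ell-3/4)\nrm{\covD_x^{(m-1)}F_{s0}}_{\calL^{\ell,2}_s\calL^2_x}$ from Lemma~\ref{lem:pEst4covHeat}. You propose to use the Duhamel inequality (Corollary~\ref{cor:heatIneq4Duhamel} plus Lemma~\ref{lem:est4Duhamel}) at \emph{every} level $m$ to first secure the $\calL^{\ell,2}_s\calL^2_x$ bound. The paper is more economical: it uses Duhamel only at the base case $k=0$ to get $\nrm{F_{s0}}_{\calL^{1,2}_s\calL^2_x}\le C_{\bfE}\cdot\bfE$, and then proceeds by straightforward induction on $k\ge 1$, since the self-referential term at level $k$ is precisely $\nrm{\calD_x^{(k-1)}F_{s0}}_{\calL^{1,2}_s\calL^2_x}$, which is the \emph{second} term on the left-hand side of the energy inequality at level $k-1$. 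So once level $k-1$ is closed, level $k$ follows directly from the energy inequality alone.

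Your Duhamel-at-each-level plan also has a concrete technical obstacle as written: Lemma~\ref{lem:est4Duhamel} is only stated for the fixed weight pair $(\calL^{1,2}_s\calL^2_x,\ \calL^{2,2}_s\calL^1_x)$. For $m\ge 2$, the nonlinearity $\sum_j\calO(\covD_x^{(j)}\bfF,\covD_x^{(m-1-j)}\bfF_s)$, placed in $\calL^{2,2}_s\calL^1_x$ via H\"older in $L^2_x\times L^2_x$, costs weight at least $3/4+(j+1)/2 + 5/4+(m-1-j)/2 = 2+m/2 > 2$, so the hypothesis of Lemma~\ref{lem:Holder4Ls} fails. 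You would therefore need to prove weight-shifted variants of Lemma~\ref{lem:est4Duhamel}; this is doable but superfluous, since the paper's induction avoids the issue entirely. (A minor point: the commutator $[\covD_i,\covD_s-\covD^\ell\covD_\ell]$ contributes only $\calO(F,\covD_x\,\cdot)$, because the $F_{si}$ contributions from $[\covD_i,\covD_s]$ and from $\covD^\ell F_{\ell i}$ cancel; so your extra $\sum\calO(\covD_x^{(j)}\bfF,\covD_x^{(k-j)}\bfF)$ term does not actually arise.)
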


In \S \ref{subsec:impEst4Fs0}, we give a proof of Proposition \ref{prop:impEst4Fs0}. Assuming Proposition \ref{prop:impEst4Fs0}, the proof of Theorem \ref{thm:dynEst} is a straightforward adaptation of \cite[Proof of Theorem 4.9]{Oh:6stz7nRe}. We present a sketch in \S \ref{subsec:pfOfDynEst}.

\subsection{Improvement of estimates for $F_{s0}$ : Proof of Proposition \ref{prop:impEst4Fs0}} \label{subsec:impEst4Fs0}

The goal of this subsection is to prove Proposition \ref{prop:impEst4Fs0}. Consider a regular solution $A_{\bfa}$ to \eqref{eq:HPYM} on $(-T_{0}, T_{0}) \times \bbR^{3} \times [0,1]$ ($T_{0} > 0$). From Proposition \ref{prop:covParabolic} and the fact that $F_{s0} = \covD^{\ell} F_{\ell 0}$, it is not difficult to verify that $F_{s0}$ satisfies the covariant parabolic equation
\begin{equation} \label{eq:covParabolic4Fs0}
	(\covD_{s} - \covD^{\ell} \covD_{\ell}) F_{s0} = 2\LieBr{\tensor{F}{_{0}^{\ell}}}{F_{s\ell}}.
\end{equation}

Recall furthermore that  $\LieBr{\covD_{i}}{(\covD_{s} - \covD^{\ell} \covD_{\ell})} B = \calO(F, \covD_{x} B)$. This implies that $\covD_{x}^{(k)} F_{s0}$ for $k \geq 1$ satisfies the following schematic parabolic equation.
\begin{equation} \label{eq:covParabolic4DFs0}
	(\covD_{s} - \covD^{\ell} \covD_{\ell}) (\covD_{x}^{(k)} F_{s0}) = \sum_{j=0}^{k} \calO( \covD_{x}^{(j)} \bfF, \covD_{x}^{(k-j)} \bfF_{s}).
\end{equation}

Now recall that the hyperbolic Yang-Mills equation holds along $s=0$. In particular, the constraint equation $\covD^{\ell} F_{\ell 0}(s=0) = 0$ holds, which is equivalent to $F_{s0}(s=0) = 0$. Taking this extra ingredient into account, it follows that $F_{s0}$ obeys an \emph{improved bound} compared to the one proved in Section \ref{sec:covParabolic}, as we state below.

\begin{proposition}[Improved estimate for $F_{s0}$, with covariant derivatives] \label{prop:pEst4covFs0}
Let $T_{0} > 0$, $t \in (-T_{0}, T_{0})$, and consider a regular solution $A_{\bfa}$ to \eqref{eq:HPYM} on $(-T_{0}, T_{0}) \times \bbR^{3} \times [0,1]$. If $\bfE(t) = \bfE[\bfF(t, s=0)] < \dlt$, where $\dlt > 0$ is the small constant in Proposition \ref{prop:pEst4covF}, then the following estimate holds for each integer $k \geq 0$.
\begin{equation} \label{eq:pEst4covFs0:1}
	\nrm{\calD_{x}^{(k-1)} F_{s 0}(t)}_{\calL^{1, \infty}_{s} \calL^{2}_{x}(0,1]} + \nrm{\calD_{x}^{(k)} F_{s 0}(t)}_{\calL^{1, 2}_{s} \calL^{2}_{x}(0,1]} \leq C_{k, \bfE(t)} \cdot \bfE(t),
\end{equation}

When $k=0$, we omit the first term on the left-hand side.
\end{proposition}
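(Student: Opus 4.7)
The key structural input is the constraint equation $\covD^{\ell} F_{\ell 0}(s=0) = 0$, which in the caloric gauge $A_{s} = 0$ reads simply $F_{s0}(s=0) \equiv 0$ on the initial slice. This vanishing initial data is what buys the improvement of the weight exponent from $\ell = 5/4$ (as given by Corollary \ref{cor:pEst4covFs} when $F_{s0}$ is treated as just another component of $\bfF_{s}$) to $\ell = 1$ for the $s$-norm of $F_{s0}$. My plan is an induction on $k$ applied to the covariant parabolic equations \eqref{eq:covParabolic4Fs0} and \eqref{eq:covParabolic4DFs0}, treating $\bfF$ and $\bfF_{s}$ on the right-hand sides as source terms already controlled by Proposition \ref{prop:pEst4covF} and Corollary \ref{cor:pEst4covFs}.

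For the base case $k = 0$, where Lemma \ref{lem:pEst4covHeat} does not directly produce the target norm $\calL^{1,2}_{s} \calL^{2}_{x}$, I would instead combine the Bochner--Weitzenb\"ock-type inequality (Lemma \ref{lem:heatIneq4abs}) with Corollary \ref{cor:heatIneq4Duhamel} to obtain the pointwise Duhamel bound
\[
	\abs{F_{s0}}(x,s) \leq 2 \int_{0}^{s} e^{(s - \sbr) \lap} \abs{\calO(\bfF, \bfF_{s})}(x, \sbr) \, \ud \sbr,
\]
the homogeneous term being absent because $F_{s0}(s=0) = 0$. Lemma \ref{lem:est4Duhamel} then reduces the problem to $\nrm{F_{s0}}_{\calL^{1,2}_{s} \calL^{2}_{x}} \leq C \nrm{\calO(\bfF, \bfF_{s})}_{\calL^{2,2}_{s} \calL^{1}_{x}}$, and H\"older in $x$ (using $\nrm{\cdot}_{L^{1}_{x}} \leq \nrm{\bfF}_{L^{2}_{x}} \nrm{\bfF_{s}}_{L^{2}_{x}}$) combined with Lemma \ref{lem:Holder4Ls} at weights $(\ell_{1}, p_{1}) = (3/4, \infty)$ for $\bfF$ and $(\ell_{2}, p_{2}) = (5/4, 2)$ for $\bfF_{s}$ closes the bound at $C \bfE$.

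For $k \geq 1$, I plan to apply Lemma \ref{lem:pEst4covHeat} to $\sgm = \covD_{x}^{(k-1)} F_{s0}$ with $\ell = (k+1)/2$, a choice tuned so that the left-hand side of \eqref{eq:pEst4covHeat:1} equals exactly $\nrm{\calD_{x}^{(k-1)} F_{s0}}_{\calL^{1,\infty}_{s} \calL^{2}_{x}} + \nrm{\calD_{x}^{(k)} F_{s0}}_{\calL^{1,2}_{s} \calL^{2}_{x}}$, matching the statement of the proposition. The boundary term $s_{1}^{\ell} \nrm{\sgm(s_{1})}_{\calL^{2}_{x}(s_{1})}$ vanishes as $s_{1} \to 0$: spatial differentiation of the constraint gives $\covD_{x}^{(k-1)} F_{s0}(s=0) \equiv 0$, and regularity of $\bfF$ provides the $O(s_{1})$ decay at the origin. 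The otherwise awkward $(\ell - 3/4) \nrm{\sgm}_{\calL^{\ell,2}_{s} \calL^{2}_{x}}$-term unfolds to $C(k) \nrm{\calD_{x}^{(k-1)} F_{s0}}_{\calL^{1,2}_{s} \calL^{2}_{x}}$, which is controlled by the inductive hypothesis. The remaining source $\sum_{j=0}^{k-1} \nrm{\calO(\covD_{x}^{(j)}\bfF, \covD_{x}^{(k-1-j)}\bfF_{s})}_{\calL^{(k+3)/2, 1}_{s} \calL^{2}_{x}}$ I would handle by mimicking the proof of Proposition \ref{prop:pEst4covF}: reduce to products of $L^{2}_{x}$-norms of covariant derivatives via Corollary \ref{cor:covSob} and H\"older in $x$, transfer to p-normalized norms via the Correspondence Principle, and balance weights using Lemma \ref{lem:Holder4Ls}.

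The principal obstacle I anticipate is precisely the bookkeeping of weights in this final bilinear source estimate, but the matching is tight rather than borderline: the total $\ell$-weight of the pairing $\covD_{x}^{(j)} \bfF \cdot \covD_{x}^{(k-1-j)} \bfF_{s}$ works out to $3/4 + j/2 + 5/4 + (k-1-j)/2 = (k+3)/2$ exactly, and the $L^{p}_{\ud s / s}$-indices can be balanced by placing one factor in its $\calL^{\cdot, \infty}_{s}$-bound and the remaining factors in their $\calL^{\cdot, 2}_{s}$-bounds, just as in the proof of Proposition \ref{prop:pEst4covF}. Once this bilinear estimate closes, the induction propagates cleanly and yields \eqref{eq:pEst4covFs0:1} for every $k \geq 0$.
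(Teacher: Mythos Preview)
Your proposal is correct and follows essentially the same route as the paper: the base case $k=0$ via the pointwise Duhamel bound from Lemma~\ref{lem:heatIneq4abs}/Corollary~\ref{cor:heatIneq4Duhamel} and Lemma~\ref{lem:est4Duhamel}, and the inductive step via the energy integral estimate (Lemma~\ref{lem:pEst4covHeat}) with $\ell = 1 + (k-1)/2$, exactly as the paper does. The only slip is in your $L^{p}_{\ud s/s}$ balancing for the source term: since the $\ell$-weights match \emph{exactly} at $(k+3)/2$, Lemma~\ref{lem:Holder4Ls} forces $1/p = 1/p_{1} + 1/p_{2}$, so placing one factor in $\calL^{\cdot,\infty}_{s}$ and the other in $\calL^{\cdot,2}_{s}$ yields $p=2$, not the required $p=1$; the paper instead puts $\calD_{x}^{(j)}\bfF$ in $\calL^{3/4,2}_{s}\calL^{6}_{x}$ and $\calD_{x}^{(k-1-j)}\bfF_{s}$ in $\calL^{5/4,2}_{s}\calL^{3}_{x}$ (both $L^{2}_{s}$), which closes cleanly.
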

\begin{proof} 
We will fix $t \in (-T_{0}, T_{0})$ and therefore omit writing $t$. Let us begin with the case $k=0$. Applying Lemma \ref{lem:est4Duhamel} to the covariant parabolic equation for $F_{s0}$, along with the fact that $F_{s0} = 0$ at $s=0$ thanks to \eqref{eq:hyperbolicYM}, it follows that
\begin{equation*}
	\nrm{F_{s0}}_{\calL^{1,2}_{s} \calL^{2}_{x}} \leq 2\nrm{\int_{0}^{s} e^{(s-\sbr) \lap} \abs{\LieBr{\tensor{F}{_{0}^{\ell}}}{F_{s\ell}}}(\sbr) \, \ud \sbr}_{\calL^{1,2}_{s} \calL^{2}_{x}}.
\end{equation*}

Using Lemma \ref{lem:est4Duhamel}, H\"older, \eqref{eq:pEst4covF:1} and \eqref{eq:pEst4covFs:1}, we have
\begin{align*}
\nrm{\int_{0}^{s} e^{(s-\sbr) \lap} \abs{\LieBr{\tensor{F}{_{0}^{\ell}}}{F_{s\ell}}}(\sbr) \, \ud \sbr}_{\calL^{1,2}_{s} \calL^{2}_{x}}
\leq & \nrm{\LieBr{\tensor{F}{_{0}^{\ell}}}{F_{s\ell}}}_{\calL^{1+1,2}_{s} \calL^{1}_{x}} \\
\leq & \nrm{\tensor{F}{_{0}^{\ell}}}_{\calL^{3/4,\infty}_{s} \calL^{2}_{x}} \nrm{F_{s\ell}}_{\calL^{5/4,2}_{s} \calL^{2}_{x}} 
\leq C_{\bfE} \cdot \bfE.
\end{align*}

Therefore, we have proved $\nrm{F_{s0}}_{\calL^{1,2}_{s} \calL^{2}_{x}} \leq C_{\bfE} \cdot \bfE$.

For $k \geq 1$, we proceed by induction. Suppose, for the purpose of induction, that the cases $0, \cdots, k-1$ have already been established. Using the energy integral estimate \eqref{eq:pEst4covHeat:1} with $\ell =1 + \frac{k-1}{2}$ to  \eqref{eq:covParabolic4DFs0} for $\covD_{x}^{(k-1)} F_{s0}$, we see that
\begin{align*}
	&\nrm{\calD_{x}^{(k-1)} F_{s0}}_{\calL^{1,\infty}_{s} \calL^{2}_{x}} + \nrm{\calD_{x}^{(k)} F_{s0}}_{\calL^{1,2}_{s} \calL^{2}_{x}} \\
	&\quad \leq C \nrm{\calD_{x}^{(k-1)}F_{s0}}_{\calL^{1,2}_{s} \calL^{2}_{x}} + C \sum_{j=0}^{k-1} \nrm{\calO( \calD_{x}^{(j)} \bfF, \calD_{x}^{(k-1-j)} \bfF_{s})}_{\calL^{1,1}_{x} \calL^{2}_{x}}.
\end{align*}

The first term on the right-hand side is acceptable by the induction hypothesis; we therefore focus on the second term. Let us use H\"older to estimate $\calD^{(j)}_{x} \bfF$ in $\calL^{3/4,2}_{s} \calL^{6}_{x}$ and $\calD^{(k-1-j)}_{x} \bfF_{s}$ in $\calL^{5/4,2}_{s} \calL^{3}_{x}$. Next, we apply Corollary \ref{cor:covSob} to each. Then using Proposition \ref{prop:pEst4covF} and Corollary \ref{cor:pEst4covFs}, the sum is estimated by
\begin{equation*}
	\sum_{j=0}^{k-1} \nrm{\calD_{x}^{(j+1)} \bfF}_{\calL^{3/4,2}_{s} \calL^{2}_{x}} \nrm{\calD_{x}^{(k-1-j)} \bfF_{s}}_{\calL^{5/4,2}_{s} \calL^{2}_{x}}^{1/2} \nrm{\calD_{x}^{(k-j)} \bfF_{s}}_{\calL^{5/4,2}_{s} \calL^{2}_{x}}^{1/2} \leq C_{k, \bfE} \cdot \bfE,
\end{equation*}

Therefore,  \eqref{eq:pEst4covFs0:1} holds for the case $k$, which completes the induction. \qedhere
\end{proof}

Suppose furthermore that $A_{\bfa}$ is in the caloric-temporal gauge, so that $A_{s} =0$ in particular. Combining Proposition \ref{prop:pEst4covFs0} and Corollary \ref{cor:cov2u}, the covariant derivative estimate \eqref{eq:pEst4covFs0:1} leads to the corresponding estimate for usual derivatives. This is the content of the following corollary, whose proof we omit.
\begin{corollary}[Improved estimate for $F_{s0}$, with usual derivatives] \label{cor:pEst4Fs0}
Assume that the hypotheses of Proposition \ref{prop:impEst4Fs0} hold. Furthermore, assume that $A_{\bfa}$ satisfies the caloric-temporal gauge condition. Then the following estimate holds for $0 \leq k \leq 29$.
\begin{equation} \label{eq:pEst4Fs0:0}
	\nrm{\nb_{x}^{(k)} F_{s 0}(t)}_{\calL^{1, \infty}_{s} \calL^{2}_{x}(0,1]} + \nrm{\nb_{x}^{(k)} F_{s 0}(t)}_{\calL^{1, 2}_{s} \calL^{2}_{x}(0,1]} \leq C_{k, \calIAlow(t), \bfE(t)} \cdot \bfE(t).
\end{equation}
\end{corollary}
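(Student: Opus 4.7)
The strategy is to combine the covariant parabolic estimate of Proposition \ref{prop:pEst4covFs0} with the substitution machinery of Corollary \ref{cor:cov2u}, with essentially no further analytic input required.

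\emph{Step 1: Covariant estimate.} The hypothesis $\bfE(t) < \dlt$ together with the regularity of $\bfF$ allow me to apply Proposition \ref{prop:pEst4covFs0} at the fixed time $t$, which yields for every integer $j \geq 0$
\begin{equation*}
\nrm{\calD_{x}^{(j-1)} F_{s0}(t)}_{\calL^{1,\infty}_{s} \calL^{2}_{x}(0,1]} + \nrm{\calD_{x}^{(j)} F_{s0}(t)}_{\calL^{1,2}_{s} \calL^{2}_{x}(0,1]} \leq C_{j, \bfE(t)} \cdot \bfE(t).
\end{equation*}
In particular, selecting $j = k$ controls $\calD_{x}^{(k)} F_{s0}$ in $\calL^{1,2}_{s} \calL^{2}_{x}$, while selecting $j = k+1$ controls $\calD_{x}^{(k)} F_{s0}$ in $\calL^{1,\infty}_{s} \calL^{2}_{x}$. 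Taken together, for every $0 \leq k \leq 29$ I obtain
\begin{equation*}
\nrm{\calD_{x}^{(k)} F_{s0}(t)}_{\calL^{1,\infty}_{s} \calL^{2}_{x}(0,1]} + \nrm{\calD_{x}^{(k)} F_{s0}(t)}_{\calL^{1,2}_{s} \calL^{2}_{x}(0,1]} \leq C_{k, \bfE(t)} \cdot \bfE(t).
\end{equation*}

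\emph{Step 2: Passage from covariant to ordinary derivatives.} The caloric-temporal gauge condition imposes $A_{s} = 0$ on $I \times \bbR^{3} \times (0,1)$, so $A_{\bfa}$ is in particular a regular solution to \eqref{eq:dYMHFcal} in the caloric gauge. Combined with the hypothesis $\bfE(t) < \dlt$, this is exactly the setup under which Proposition \ref{prop:intEst4A:s} and hence Corollary \ref{cor:cov2u} are valid at the fixed time $t$. I apply Corollary \ref{cor:cov2u} to $\sgm = F_{s0}$ with parameters $b = 1$, $r = 2$, and $p \in \{2, \infty\}$, feeding in the bound from Step 1 as input with $m = k$; the output is the estimate \eqref{eq:pEst4Fs0:0} with the desired constant $C_{k, \calIAlow(t), \bfE(t)}$, valid in the range $0 \leq k \leq 29$ dictated by Proposition \ref{prop:intEst4A:s}.

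\emph{Main obstacle.} There is no substantive obstacle: the hard parabolic analysis is already encapsulated in Proposition \ref{prop:pEst4covFs0}, and the covariant-to-ordinary conversion is handled wholesale by Corollary \ref{cor:cov2u}. The only point requiring attention is book-keeping the two indices ($j = k$ and $j = k+1$) needed from Proposition \ref{prop:pEst4covFs0} to simultaneously supply $\calL^{1,\infty}_s$ and $\calL^{1,2}_s$ bounds on $\calD_{x}^{(k)} F_{s0}$, and verifying that the range of $k$ in Corollary \ref{cor:cov2u} accommodates the stated range $0 \leq k \leq 29$.
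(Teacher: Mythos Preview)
Your proposal is correct and follows exactly the approach the paper itself indicates: the paper states just before the corollary that it follows by combining Proposition~\ref{prop:pEst4covFs0} and Corollary~\ref{cor:cov2u}, and omits the proof as easy. Your Step~1/Step~2 structure and the bookkeeping of indices $j=k$ and $j=k+1$ to extract both the $\calL^{1,\infty}_s$ and $\calL^{1,2}_s$ bounds are precisely what is needed; the only cosmetic remark is that when invoking Corollary~\ref{cor:cov2u} it is cleaner to take $m=29$ once (since the covariant estimate holds for all $j\geq 0$) rather than $m=k$ separately for each $k$, but either way the argument goes through.
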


The estimate \eqref{eq:pEst4Fs0:0} is more than sufficient to prove Proposition \ref{prop:impEst4Fs0}.

\subsection{Proof of Theorem \ref{thm:dynEst}} \label{subsec:pfOfDynEst} 
With Proposition \ref{prop:impEst4Fs0}, we are ready to give a proof of Theorem \ref{thm:dynEst}. We follow \cite[Proof of Theorem 4.9]{Oh:6stz7nRe}, replacing \cite[Proposition 7.3]{Oh:6stz7nRe} by Proposition \ref{prop:impEst4Fs0}. 

\begin{proof} [Proof of Theorem \ref{thm:dynEst}]
Let $A_{\bfa}$ be a regular solution to the hyperbolic-parabolic Yang-Mills equation in the caloric-temporal gauge on $(-T_{0}, T_{0}) \times \bbR^{3} \times [0,1]$ such that \eqref{eq:dynEst:hyp} is satisfied. For simplicity, we will consider the case when $I_{0}$ is centered at $t=0$, i.e., $I_{0} = (-d/2, d/2)$ for $d > 0$ to be determined. As we shall see, the proof only utilizes the hypotheses \eqref{eq:dynEst:hyp} on $I_{0}$; therefore, the same proof applies to other $I_{0} \subset (-T_{0}, T_{0})$ as well.

We claim that 
\begin{equation} \label{eq:dynEst:pf:0}
	\calF(I_{0}) + \calAlow(I_{0}) \leq B D,
\end{equation}
for a large enough absolute constant $B$, to be determined later, provided that $\abs{I_{0}} = d$ is small enough. Note that Theorem \ref{thm:dynEst} then follows immediately from the claim, thanks to Part (1) of Lemma \ref{lem:ingr4dynEst}.

We will use a bootstrap argument. The starting point is provided by Part (2) of Lemma \ref{lem:ingr4dynEst}, which implies 
\begin{equation*}
	\calF(I'_{0}) + \calAlow(I'_{0}) \leq 2 \calI(0),
\end{equation*}
for some subinterval $I'_{0} \subset I$ containing $0$ such that $\abs{I'_{0}} > 0$ is sufficiently small (by upper semi-continuity of $\calF, \calAlow$ at $0$). Note that the right-hand side is estimated by $B D$, provided we choose $B \geq 2$.

Next, let us assume the following \emph{bootstrap assumption}:
\begin{equation*}
	\calF(I'_{0}) + \calAlow(I'_{0}) \leq 2 B D
\end{equation*}
for $I'_{0} := (-T', T') \subset I$. Applying Parts (3), (4) of Lemma \ref{lem:ingr4dynEst}, and using Proposition \ref{prop:impEst4Fs0} to control $\calE(I'_{0})$, we obtain
\begin{align*}
	\calF(I'_{0}) + \calAlow(I'_{0})
	\leq & C \calI + (T')^{1/2} C_{C_{D, \bfE}, \calF(I'_{0}), \calAlow(I'_{0})} (C_{D, \bfE} + \calF(I'_{0}) + \calAlow(I'_{0}))^{2} \\
	& + (T') \bb( C_{\calF(I'_{0}), \calAlow(I'_{0})} C_{D, \bfE} + C_{C_{D, \bfE}, \calF(I'_{0}), \calAlow(I'_{0})} (C_{D, \bfE} + \calF(I'_{0}) + \calAlow(I'_{0}))^{2} \bb),
\end{align*}
where $\bfE$ is the common value of $\bfE[\bfF(t, s=0)]$ (by conservation of energy). Here, we used the hypotheses \eqref{eq:dynEst:hyp} on $I'_{0} \subset I_{0}$. Using the bootstrap assumption and choosing $d$ small enough depending on $D, \dlt$ and $B$ (note that $T' \leq d$), we can make the second and third terms on the right-hand $\leq \frac{B}{2} D$. Then choosing $B > 2 C$, we see that 
\begin{equation*}
	\calF(I'_{0}) + \calAlow(I'_{0}) \leq B D,
\end{equation*}
which beats the bootstrap assumption. By a standard continuity argument, \eqref{eq:dynEst:pf:0} then follows. \qedhere
\end{proof}

\appendix
\section{Dependence on the companion paper} \label{appendix:dependence}

{\footnotesize
\begin{center}
\renewcommand{\arraystretch}{1.2}
\begin{tabular}{c c c p{15em}}
		\multicolumn{2}{c}{\bfseries Ref. in this paper} &   {\bfseries Ref. in \cite{Oh:6stz7nRe}} & {\bfseries Description} \\ \hline 
Section \ref{sec:introduction}
			& Theorem \ref{thm:lwp4YM} 	
			& Main Theorem 		
			& $H^{1}$ LWP of \eqref{eq:hyperbolicYM} in the temporal gauge \\
			\hline 
Section \ref{sec:reduction} 	
			& Lemma \ref{lem:regApprox}		
			& Lemma 4.5		
			& Approximation of admissible $H^{1}$ initial data by regular initial data \\			
			& Lemma \ref{lem:est4gt2temporal}		
			& Lemma 4.6		
			& Estimates for gauge transforms to the temporal gauge \\			
			\hline 
Section \ref{sec:YMHF}
			& Theorem \ref{thm:lwp4YMHF}
			& Theorem 5.1 
			& Local existence for \eqref{eq:YMHF} with $\dot{H}^{1}_{x}$ initial data \\
			& Proof of Theorem \ref{thm:impLWP4dYMHF}
			& Proposition 5.7
			& WP of the linear covariant parabolic equation for $F_{0i}$ in \eqref{eq:dYMHF}\\
			&
			& Lemma 6.1
			& Solving for $A_{0}$ in \eqref{eq:dYMHF} \\
			\hline
Section \ref{sec:pfOfIdEst}
			& Theorem \ref{thm:oldIdEst}
			& Theorem 4.8 
			& Gauge transformation into the caloric-temporal gauge, with estimates	\\
			\hline
Section \ref{sec:pfOfDynEst}
			& Lemma \ref{lem:ingr4dynEst}.(1)	
			& Proposition 7.1
			& Estimate for the norm $\calA_{0}$ for $A_{0}(s=0)$ \\
			& Lemma \ref{lem:ingr4dynEst}.(1)	
			& Proposition 7.2
			& Estimate for $\nrm{\rd_{t,x} A_{i}(s)}_{L^{\infty}_{t} L^{2}_{x}}$ uniform in $s \in [0,1]$ \\
			& Lemma \ref{lem:ingr4dynEst}.(2)	
			& Proposition 7.4
			& Continuity properties of $\calF$, $\calAlow$ \\
			& Lemma \ref{lem:ingr4dynEst}.(3)	
			& Theorem 7.5
			& Hyperbolic estimates for $\Alow_{i}$ in the caloric-temporal gauge\\
			& Lemma \ref{lem:ingr4dynEst}.(4)	
			& Theorem 7.6
			& Hyperbolic estimates for $F_{si}$ in the caloric-temporal gauge \\
\end{tabular}
\end{center}
}
\section{List of symbols} \label{appendix:LOS}
\begin{center}
\footnotesize
\begin{tabular}{c  c p{25em}}
{\bfseries Symbol} & {\bfseries Ref.} & {\bfseries Description}  \\
$\LieGrp, \LieAlg$ & \S \ref{subsec:intro:bg} & {$\LieGrp$ is the Lie group equipped with a bi-invariant inner product, $\LieAlg$ is the associated Lie algebra}\\
$\bfD_{\mu}, \bfD_{t,x}, \bfD_{x}$ & \S \ref{subsec:intro:bg} & {Covariant derivative in the $\mu$-direction, covariant space-time gradient and covariant spatial gradient, resp.} \\
$\partial_{\mu}, \rd_{t,x}, \rd_{x}$ & \S \ref{subsec:intro:bg} & {Ordinary derivative in the $\mu$-direction, ordinary space-time gradient and ordinary spatial gradient, resp.} \\
$\bfE(t)= \bfE[\bfF](t), \bfE[\overline{\bfF}] $ & \eqref{eq:YMenergy} & Energy of $\bfF(t,x) = F_{\mu \nu}(t,x)$. $\bfE[\overline{\bfF}]$ is the energy at $t=0$. \\
$\bfB(t) = \bfB[F](t) $ & \eqref{eq:Menergy} & Magnetic energy of $F(t,x) = F_{ij}(t,x)$ \\
$\Atemp$ & \S\S \ref{subsec:intro:lwp}, \ref{subsec:overview4GWP}, \ref{subsec:reduction} 
		& Solution to \eqref{eq:hyperbolicYM} in the temporal gauge $A^{\dagger}_{0} = 0$\\
$\calIini$ & \S \ref{subsec:intro:mainThm} & $\calIini := \nrm{\Aini}_{\dot{H}^{1}_{x}} + \nrm{\Eini}_{L^{2}_{x}}$ \\
$\calA_{0}[A_{0}](I)$ & \eqref{eq:calA0} & A norm for $A_{0}$, used for controlling the ODE $\rd_{0} V = V A_{0}$ \\
$\calD_{\mu}, \calD_{t,x}, \calD_{x} $ & \S \ref{subsec:prelim:pnorm} 
		& {$\calD_{\mu}(s) = s^{1/2} \covD_{\mu}$, $\calD_{t,x}(s) = s^{1/2} \covD_{t,x}$, $\calD_{x}(s) = s^{1/2} \covD_{x}$}\\
$\nabla_{\mu}, \nabla_{t,x}, \nabla_{x}$ & \S \ref{subsec:prelim:pnorm} 
		& {$\nb_{\mu}(s) = s^{1/2} \rd_{\mu}$, $\nb_{t,x}(s) = s^{1/2} \rd_{t,x}$, $\nb_{x}(s) = s^{1/2} \rd_{x}$} \\
$\calL^{k, p}_{s}$ & \S \ref{subsec:prelim:pnorm}
		& $\nrm{f(s)}_{\calL^{k, p}_{s}(J)} := (\int_{J} (s^{k} \abs{f(s)})^{p} \, \frac{\ud s}{s} )^{1/p}$ \\
$\calL^{r}_{x}$ & \S \ref{subsec:prelim:pnorm} 
		& $\nrm{\psi(s, x)}_{\calL^{r}_{x}(s)} := s^{-3/2r} \nrm{\psi(s, x)}_{L^{r}_{x}}$ \\
$\dot{\calH}^{m}_{x}$ & \S \ref{subsec:prelim:pnorm} 
		& $\nrm{\psi(s, x)}_{\dot{\calH}^{m}_{x}(s)} := s^{m/2-3/4} \nrm{\psi(s, x)}_{\dot{H}^{m}_{x}}$ \\
$\calI(t)$ & \S \ref{sec:pfOfIdEst} & A norm for data for \eqref{eq:HPYM} at time $t$ in the caloric-temporal gauge. $\calI(t) = \calIAlow(t) + \calIFs(t)$ \\
$\calIAlow(t)$ & \eqref{eq:calIAlow}, \S \ref{sec:pfOfIdEst} 
		& A norm for the $\calAlow$ part of data for \eqref{eq:HPYM} at time $t$ in the caloric-temporal gauge \\
$\calIFs(t)$ & \S \ref{sec:pfOfIdEst} & A norm for the $F_{s}$ part of data for \eqref{eq:HPYM} at time $t$ in the caloric-temporal gauge \\
$\calE(t)$ & \S \ref{sec:pfOfDynEst} & A norm for $F_{s0} = \rd_{s} A_{0}$ in the caloric-temporal gauge\\
$\calE(I)$ & \S \ref{sec:pfOfDynEst}  & $\calE(I) := \sup_{t \in I} \calE(t)$ \\
$\calF(I), \calAlow(I)$ & \S \ref{sec:pfOfDynEst} & Space-time norms for $F_{s}$ and $\Alow$, resp., adapted to the wave equation\\ 
\end{tabular}
\end{center}


\bibliographystyle{amsplain}

\end{document}